\documentclass[11pt]{article}

\usepackage[a4paper, left=3cm, right=3cm, top=3cm]{geometry}
\usepackage{amssymb}
\usepackage{amsmath}
\usepackage{amsthm}
\usepackage{graphicx}
\usepackage{color}
\usepackage{xcolor}
\usepackage{scrextend}
\usepackage{dsfont}
\usepackage{mathrsfs}
\usepackage{mathtools}
\usepackage{appendix}
\usepackage{blindtext}
\usepackage{caption}
\usepackage[hidelinks]{hyperref}
\usepackage{overpic}
\usepackage{mathtools}
\definecolor{Green}{cmyk}{1,0,1,0.25}
\usepackage{makecell}

\title{Stochastic Flows with Strong Shear -- Part I:\\
Strong Completeness and Set Attractors}
\author{Dennis Chemnitz, Maximilian Engel and Michael Scheutzow} 

\date{\today}

\newcommand{\fa}          {\quad \text{for all } \,}
\newcommand{\faa}          {\quad \text{for almost all } \,}

\newcommand{\eqdist}{\stackrel{\mathclap{\normalfont\mbox{d}}}{=}}

\newcommand{\R}{\mathbb{R}}
\newcommand{\dd}{\mathrm{d}}

\renewcommand{\P}{\mathbb{P}}

\newcommand{\vertiii}[1]{{\left\vert\kern-0.25ex\left\vert\kern-0.25ex\left\vert #1 
    \right\vert\kern-0.25ex\right\vert\kern-0.25ex\right\vert}}

\definecolor{dorange}{RGB}{179, 98, 0}

\theoremstyle{plain}
\newtheorem{theorem}{Theorem}[section]
\newtheorem{lemma}[theorem]{Lemma}
\newtheorem{proposition}[theorem]{Proposition}

\newtheorem{corollary}[theorem]{Corollary}
\newtheorem{maintheorem}{Theorem}

\newtheorem{theoremC}{Theorem}

\numberwithin{equation}{section}

\theoremstyle{definition}
\newtheorem{definition}[theorem]{Definition}

\newtheorem{remark}[theorem]{Remark}

\begin{document}

\maketitle
\begin{abstract}
    We study strong completeness and the existence of a set attractor for the stochastic flows induced by a class of two-dimensional stochastic differential equations resembling a planar Ornstein-Uhlenbeck process with an additional radius-dependent rotational drift term. Our main results give both sufficient and necessary conditions for the stochastic flows to be strongly complete and for the existence of set attractors. In particular, we demonstrate that if the derivative of the angular velocity $\rho(r)$ with respect to the radius $r$ satisfies $|\rho'(r)| \geq K_2\, r^3$, for large $r$ and some universal constant $K_2$, the diameter of a compact set can grow exponentially fast with positive probability. Furthermore, we show that for $|\rho'(r)|\geq r^{3+\varepsilon}$, $\varepsilon>0$, with probability one, exceptional initial conditions diverge to infinity in finite time, ruling out the existence of a global stochastic flow. 
\end{abstract}


\section{Introduction}
This is the first paper of a two-part study investigating the stochastic flow of stochastic differential equations (SDEs) of the form
\begin{align}\label{SDE}
\begin{split}
  \dd X(t)=-c\,X(t)\,\dd t+\rho(|X(t)|) \begin{pmatrix}0&1\\-1&0 \end{pmatrix}X(t)\,\dd t+\sigma\,\dd W(t),\\
\end{split}
\end{align}
  where $W(t)$ is a 2-dimensional Brownian motion, $c, \sigma > 0$ are positive constants and ${\rho:[0,\infty)\to \mathbb R}$ is a $C^2$-function with $\rho'(0)=0$, which ensures that the drift is also $C^2$.
\subsection{Stochastic Flows}

Since the foundational work of Kunita \cite{kunita1990stochastic}, \emph{stochastic flows} induced by SDEs have been a central object of study in probability theory and random dynamical systems (RDS). While Kunita's original work is restricted to SDEs whose drift and diffusion are globally Lipschitz, many subsequent works investigate the local existence of stochastic flows for SDEs with less regular drift. At the same time, the global-in-time existence of stochastic flows with superlinearly growing drift is still insufficiently understood. 

Unlike deterministic flows induced by ODEs, for stochastic flows, there are two different notions of completeness. An SDE is called \emph{weakly complete} if, for every initial condition, there exists a global-in-time solution almost surely. Crucially, for weak completeness, the probability-zero event on which the solution might blow up is allowed to depend on the initial condition. In contrast, \emph{strong completeness} requires that there exists an event that has probability 1 and on which there exists a global-in-time solution for every initial condition,
and that there is a version of these solutions that depends continuously on the initial condition. Strong completeness is equivalent to the existence of a global-in-time stochastic flow. A question closely related to strong completeness is whether the stochastic flow admits a set attractor. Loosely speaking, for a global stochastic flow, a set attractor is a random compact set that depends only on the noise and to which any bounded set is attracted.

The goal of this work is to better understand under which conditions the stochastic flow induced by an SDE is strongly complete and under which conditions it admits a set attractor.
For this we consider the (local) stochastic flow $(t, \omega,x) \mapsto \varphi(t,\omega,x)$ induced by the SDE \eqref{SDE}.
  A direct calculation shows that, independently of the choice of $\rho$, the Gaussian
\begin{equation*} \label{eq:stat_dens_Cartesian}
    \dd \mu(x,y) = \frac{c}{\pi \sigma^2} \exp\left( -c\,\frac{x^2 + y^2}{\sigma^2} \right) \dd x\, \dd y
\end{equation*} 
is the unique stationary measure for~\eqref{SDE}. 
Ignoring, for now, the possibility for $R(t)$ to pass through the origin, we may translate Equation \eqref{SDE} to polar coordinates.
 Setting $R(t) := |X(t)|$ and introducing the angular stochastic process $\phi(t)$ given by the polar coordinate representation
\begin{equation*}\label{polar}
    X(t) = R(t) \begin{pmatrix}\cos(\phi(t))\\\sin(\phi(t))\end{pmatrix},
\end{equation*}
we obtain, as a straightforward application of It\^{o}'s formula,
\begin{align} \label{eq:SDE_polar}
\begin{split}
    \dd R(t) &= \left(-c\,R(t)+\frac{\sigma^2}{2R(t)}\right) \dd t +\sigma\,[\cos(\phi(t))\,\dd W_1(t)+\sin(\phi(t))\,\dd W_2(t)],\\
    \dd \phi(t) &= -\rho(R(t))\, \dd t + \frac{\sigma}{R(t)}\,[-\sin(\phi(t)) \,\dd W_1(t) + \cos(\phi(t)) \,\dd W_2(t)].
    \end{split}
\end{align}
 In law, the SDE \eqref{eq:SDE_polar} is equivalent to
\begin{align} \label{eq:SDE_polar_simple}
\begin{split}
    \dd R(t) &= \left(-c\, R(t)+\frac{\sigma^2}{2R(t)}\right) \,\dd t +\sigma\,\dd W_R(t),\\
    \dd \phi(t) &= -\rho(R(t)) \,\dd t + \frac{\sigma}{R(t)} \,\dd W_\phi(t),
    \end{split}
\end{align}
Note that the law of $R(t)$ does not depend on $\rho$ and is thus identical to the law of the radius of the Ornstein-Uhlenbeck process \eqref{SDE} with $\rho=0$. In particular, the SDE \eqref{SDE} is weakly complete for any $\rho$. 
 On the other hand, the law of $\phi(t)$ depends on $R(t)$ both through the diffusion and, if $\rho$ is non-constant, through the drift. We call the latter effect \emph{shearing}.
This skew-product structure makes the analysis of Equation \eqref{eq:SDE_polar_simple} comparatively simple.
However, this analysis is only valid for the law of a single solution, while for the pathwise dynamics, the phase $\phi$ does influence the amplitude $R$ through the noise term, see Equation \eqref{eq:SDE_polar}. This interplay between phase and amplitude is known to have the potential to induce chaotic dynamics, so-called \emph{shear-induced chaos} \cite{LinYoung08,DoanEngeletal2018, EngelEtAl19,ChemnitzEngel23,Baxendale24}. 
In this article, we show that, if the shear is sufficiently strong, a similar mechanism can cause solutions to go to infinity. Depending on the asymptotic growth rate of $|\rho'|$, this can cause the stochastic flow to no longer admit a set attractor, or even to lose strong completeness. We call the latter effect \emph{shear-induced blow-up}.

Without loss of generality, we restrict our analysis to the case $c = \sigma = 1$, that is, the study of the SDE
\begin{equation}\label{SDE2}
    \dd X(t)=-X(t)\,\dd t+\rho(|X(t)|) \begin{pmatrix}0&1\\-1&0 \end{pmatrix}\,X(t)\,\dd t+\dd W(t).
\end{equation}
Indeed, using linear transformations of the state space, as well as time, we can transform the SDE \eqref{SDE} for any values of $c$ and $\sigma$ back to $c = \sigma = 1$. Note that neither strong completeness nor the existence of set attractors are affected by such a coordinate transformation.

\subsection{Main results}
For two functions $f,g$, the notation $f(r) \lesssim g(r)$ means that there exists a constant $C$ independent of $r$, for which $f(r)\leq C\,g(r)$, for all $r\geq 0$.
\begin{maintheorem}\label{theo:attractors1}
    The following hold:
    \begin{enumerate}
        \item[(a)] If the function $\rho$ is such that there exists an $\varepsilon>0$ for which
            \begin{equation}
                |\rho (r)| \lesssim r^{3/2-\varepsilon}+1,  \tag{Compl-I}\label{eq:Compl-I}
            \end{equation}
            then the RDS $\varphi$ induced by \eqref{SDE2} is strongly complete (cf.~Definition \ref{def:compl}).
        \item[(b)] If the function $\rho$ satisfies 
    \begin{equation}
        |\rho (r)| \lesssim r+1,  \tag{PBAtt}\label{eq:PBAtt}
    \end{equation}
    then the RDS $\varphi$ induced by \eqref{SDE2} admits a pull-back set attractor (cf.~Definition \ref{def:Attr}).
    \end{enumerate}
\end{maintheorem}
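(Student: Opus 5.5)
Both parts rest on controlling the spatial derivative of the flow, i.e.\ the linearization $v(t)=D\varphi(t,\omega,x)v_0$. Writing $F(x)=-x+\rho(|x|)Jx$ with $J=\begin{pmatrix}0&1\\-1&0\end{pmatrix}$, we have $\dot v = DF(X)v$ and
\begin{equation*}
DF(x)=-I+\rho(|x|)J+\rho'(|x|)\,Jx\,\frac{x^{\top}}{|x|}.
\end{equation*}
The antisymmetric part $\rho J$ does not change $|v|$. The shear term has operator norm $|\rho'(|x|)|\,|x|$. Hence
\begin{equation*}
\frac{\dd}{\dd t}|v|\le \bigl(-1+|\rho'(R)|R\bigr)|v| .
\end{equation*}
For strong completeness I would use the standard criterion (Li, Li--Scheutzow type): a locally Lipschitz SDE with additive noise is strongly complete if it is weakly complete and, on every compact $K$ and finite horizon $T$, the derivative flow satisfies a moment bound $\sup_{x\in K}\mathbb E\sup_{t\le T}|D\varphi(t,x)|^p<\infty$ for some $p>2$ (larger than the dimension). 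The point is that such a bound prevents the image of a small ball from becoming unbounded before the radius itself blows up. Weak completeness is already known, since the law of $R$ is the Ornstein--Uhlenbeck radial law.

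\textbf{Part (a).} The plan is to bound $|D\varphi|\le \exp\int_0^t |\rho'(R(s))|R(s)\,\dd s$ and to use the radial SDE \eqref{eq:SDE_polar_simple}, whose law is independent of $\rho$, to estimate exponential moments of $\int_0^T g(R)\,\dd s$.
\begin{itemize}
\item The radial process has Gaussian tails uniformly in time, so $\mathbb E\exp\bigl(p\int_0^T R^{\alpha}\,\dd s\bigr)<\infty$ exactly when $\alpha<2$.
\item To exploit this, the hypothesis must be used on $\rho$ itself rather than on $\rho'$. I would first pass to a rotating frame, or change variables $x\mapsto$ angle shifted by $\int\rho$, so that the relevant quantity becomes $\rho$-dependent growth rather than $\rho'$.
\item Alternatively, and more concretely, I would linearize in polar coordinates. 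A radial perturbation $\delta R$ produces an angular deviation $\delta\phi$ with $\dot{\delta\phi}=-\rho'(R)\delta R+\dots$. Since $\delta R$ evolves through the noise coupling $\delta\phi\cos/\sin$ terms, the growth rate of the two-dimensional system $(\delta R, R\,\delta\phi)$ is governed by $\sqrt{|\rho'|R\cdot(1/R)}$ or similar geometric-mean quantities.
\end{itemize}
I expect the growth exponent to be of order $|\rho(R)|^{?}$. Matching the threshold $3/2$ suggests the effective rate is $\rho(R)^{4/3}$, or $\rho^2/R$, which must be $\lesssim R^{2-}$ to keep the exponential moments finite. I would determine this exponent via a Lyapunov function $V(x,v)$ for the two-point or derivative process, of the form $|v|^p\exp(-\beta|x|^2)$ weighted appropriately, and apply Itô's formula to show $\mathcal L V\le CV$.

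\textbf{Part (b).} Here I would show existence of a random compact absorbing set and then invoke the standard Crauel--Flandoli theorem: a compact random set that pull-back absorbs all bounded deterministic sets gives a pull-back attractor. With $|\rho|\lesssim r+1$, the rotation drift is at most quadratic. The plan is to compare two solutions $X,Y$ started in a bounded set $B$ through the difference $Z=X-Y$, which solves a random ODE with no noise: $\dot Z=F(X)-F(Y)$. Applying $|F(x)-F(y)|$ estimates, or the derivative bound above with $|\rho'|R\lesssim$ only on average, one shows that $\sup_{x\in B}|\varphi(t,\theta_{-t}\omega,x)|$ stays bounded as $t\to\infty$. Concretely, I would take the stationary OU-like process $X^*$ driven by $W$, write $X=X^*+Z$, and prove $|Z(t)|\to$ bounded uniformly for $x\in B$, using $\langle Z,F(X^*+Z)-F(X^*)\rangle$. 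The antisymmetric part contributes $(\rho(|X|)-\rho(|X^*|))\langle Z,JX^*\rangle$, which is bounded by $\|\rho'\|\,|Z|^2|X^*|$ if $\rho'$ is bounded, a condition implied in the averaged sense.

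\textbf{Main obstacle.} The hard step in (b) is that $\rho'$ need not be bounded. For this I would use the radial components directly, exploiting that $|\rho|\lesssim r$ gives rotation speed $|\rho(r)|\,r\lesssim r^2$. The absorbing set can then be built from $\sup_{s\le0}$-type functionals of $R^*$, controlled by ergodicity and Gaussian tails. The genuinely hard step in (a) is identifying the correct exponent $3/2$ and producing the matching Lyapunov function.
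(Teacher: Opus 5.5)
\textbf{Part (a).} This is not a proof, and the route cannot work as planned. The Jacobian is driven by the rank-one shear term $\rho'(|x|)\,Jx\,x^\top/|x|$ that you computed. But \eqref{eq:Compl-I} constrains only $|\rho|$ and says nothing about $\rho'$. A bounded $\rho$ whose derivative oscillates with rapidly growing amplitude satisfies the hypothesis, and then nothing keeps $\mathbb E\sup_{t\le T}|D\varphi|^p$ finite.

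A rotating frame does not remove $\rho'$ from the derivative. The frame angle $\chi_X(t)=\int_0^t\rho(|X(s)|)\,\dd s$ depends on the initial point, and its gradient $\int_0^t\rho'(|X|)\,\hat X^\top D\varphi\,\dd s$ (with $\hat X=X/|X|$) brings $\rho'$ back. Your polar linearization shows the same thing: $\delta\phi$ is driven by $\rho'(R)\,\delta R$, so the growth of $(\delta R,R\,\delta\phi)$ is set by $\rho'$, and the guessed rates $\rho^{4/3}$ or $\rho^2/R$ have no basis. The paper explicitly leaves open for which $\rho$ Li's Jacobian-moment criterion holds.

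The paper does not prove (a) itself. It is a direct application of the pathwise, deterministic theorem of Li and Ying \cite[Theorem E (i)]{li2025strong}, which holds for any $C^{1/2-}$ driving path and uses only the size of the rotational drift. If you want a self-contained argument, apply the rotating frame to trajectories rather than to $D\varphi$:
\begin{itemize}
\item Lemma~\ref{lemm:contraction} and the integration-by-parts bound $\bigl|\int_s^t\mathcal R(\chi)\,\dd W\bigr|\le(1+(t-s)\|\chi'\|_\infty)\sup_{u\in[s,t]}|W(u)-W(s)|$ control $|X|$ through $\sup|\rho(|X|)|$ alone.
\item Hence Lemma~\ref{lemm:contr2}(i) holds with $b_n=0$ and $\beta_n=1\vee\sup_{r\le r_{n+1}}|\rho(r)|$.
\item Choosing $r_{n+1}-r_n=q_n=\beta_n^{-1/2}$ and $t_n=\beta_n^{-1}$, Lemma~\ref{lemm:QScaling} gives $\mathbb P(\tau_n^\uparrow\ge t_n)\ge\mathbb P(Q_{0,1,1}<1)>0$.
\item Under \eqref{eq:Compl-I} one has $\sum_n t_n=\infty$, so Lemma~\ref{lemm:ballcrit}(i) applies.
\end{itemize}

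\textbf{Part (b).} The step that fails is comparing $X$ with another solution $X^*$ of \eqref{SDE2}. It produces $(\rho(|X|)-\rho(|X^*|))\langle Z,JX^*\rangle$, which needs a bound on $\rho'$. \eqref{eq:PBAtt} provides none, not even ``in the averaged sense''. Your fallback in the last paragraph does not name a mechanism. The paper's proof rests on two ideas missing from your plan.

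\emph{First}, subtract an Ornstein--Uhlenbeck process $O^*(\theta_t\omega)$ solving $\dd O=-\alpha O\,\dd t+\dd W$, not a solution of \eqref{SDE2}. Then $Z=X-O^*$ solves the random ODE $\dot Z=F(Z+O^*)+\alpha O^*$. Because $\langle Z,JZ\rangle=0$, the rotation enters $\frac{\dd}{\dd t}|Z|^2$ only through $\rho(|Z+O^*|)\langle Z,JO^*\rangle\le C(1+|Z|+|O^*|)\,|Z|\,|O^*|$, which uses $|\rho|$ alone. This gives $\dot\eta\le\beta(\theta_t\omega)\,\eta+\gamma(\theta_t\omega)$ for $\eta=\frac12|Z|^2$, with $\beta=-1+4C|O^*|$ and $\gamma$ a polynomial in $|O^*|$.

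\emph{Second}, the quadratic contribution $4C|O^*|\,\eta$ cannot be absorbed pointwise, so one needs $\mathbb E\beta<0$. This is obtained by taking the damping $\alpha$ large, since $\mathbb E|O^*|=\frac12\sqrt{\pi/\alpha}$. For $\alpha=1$, the natural choice for \eqref{SDE2}, the mean of $\beta$ is positive as soon as $C>1/(2\sqrt\pi)$.

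With $\mathbb E\beta<0$, Birkhoff's theorem gives $\int_{-t}^0\beta(\theta_s\omega)\,\dd s\le -ct$ for large $t$. Temperedness of $O^*$ then yields a tempered absorbing ball for the transformed system. That ball absorbs all tempered random compact sets, as Definition~\ref{def:Attr}(ii) requires; absorbing only deterministic bounded sets, as you propose, gives a weaker notion. Finally, the tempered shift by $O^*$ transfers the attractor back to $\varphi$.
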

Theorem \ref{theo:attractors1} (a) is a direct consequence of a recent paper by Li and Ying, specifically \cite[Theorem E (i)]{li2025strong}. In particular, this is the strongest result establishing strong completeness for \eqref{SDE2} available in the literature.
The proof of Theorem \ref{theo:attractors1} (b) uses a well-established method \cite{imkellerlederer02,IS01, DoanEngeletal2018} based on a random transformation of the SDE \eqref{SDE2} into a random ordinary differential equation (RDE). It is carried out in Section \ref{sec:ProofAttrI}. 

Using information not only on the absolute value of $\rho$, but on the absolute value of its derivative $\rho'$, we give a different set of conditions for strong completeness and the existence of a set attractor, which allow for a higher-order polynomial growth of $\rho$. As a trade-off, we can only show the existence of an attractor according to a weaker notion of set attraction. 

\begin{maintheorem}\label{theo:attractors2}
    The following hold:
    \begin{enumerate}
        \item[(a)] If the function $\rho$ satisfies
        \begin{equation}
           \int_0^\infty \left(1\vee\sup_{r' \in [0, r]} |\rho'(r')|\right)^{-1/3} \dd r = \infty,\tag{Compl-II} \label{eq:Compl}
        \end{equation}
        then the RDS $\varphi$ induced by \eqref{SDE2} is strongly complete.
        \item[(b)] There exists a constant $K_1>0$, independent of $\rho$, with the property that if the function $\rho$, for some $r^*>0$,  satisfies 
        \begin{equation}
            |\rho'(r)| \leq K_1\,r^3,\quad\forall\, r\geq r^*,\tag{Attr} \label{eq:Attr}
        \end{equation}
       then the RDS $\varphi$ induced by \eqref{SDE2} admits a weak set attractor (cf.~Definition \ref{def:Attr}).
    \end{enumerate}
\end{maintheorem}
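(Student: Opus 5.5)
For part (a) I would use a derivative-flow criterion for strong completeness, in the spirit of Li's and Li--Scheutzow's results. These say that strong completeness follows if, for some $p$, the moments of the linearised flow $\mathbb E\,\|D\varphi(t,x)\|^p$ admit a bound that does not blow up too fast as $|x|\to\infty$. More concretely, the condition is that $x\mapsto \varphi(t,x)$ restricted to curves cannot escape to infinity in finite time. I would reduce to showing that for any smooth curve $\gamma$ of initial conditions, the length of $\varphi(t,\gamma)$ stays finite up to the explosion time of its endpoints. Weak completeness already guarantees that single trajectories do not explode.

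The key computation is the linearisation of \eqref{SDE2}. A tangent vector $v$ at $x$, written in polar components (radial $v_r$, angular $v_\phi$), satisfies
\begin{equation*}
\dot v_r=-v_r+O(v/R^2),\qquad \dot v_\phi = -\rho'(R)\,R\,v_r\,\text{-type shear term}+O(v/R),
\end{equation*}
so the shear converts radial separation into angular separation at rate $|\rho'(R)|$. Radial displacements decay at rate $1$. The dangerous mechanism is the feedback through the noise: an angular separation $\delta\phi$ at radius $R$ produces a radial separation of order $\delta\phi$ via the rotated noise directions in \eqref{eq:SDE_polar}. Balancing these effects, a radial separation $\delta$ generates an angular separation of order $|\rho'|\delta$, which then feeds back as $\sim |\rho'|^{1/3}$-type growth per unit radius when one optimises the timescale. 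Here the random-walk feedback gives effective exponential growth rate $\lambda(r)\asymp (1\vee\sup_{[0,r]}|\rho'|)^{1/3}$.

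I would then use a comparison with a one-dimensional ``radius versus log-separation'' process. The separation of a curve can only reach radius $r$ after the endpoint radius process (an OU-radius, independent of $\rho$) has climbed, and climbing from $r$ to $r+\dd r$ costs time during which the log-length grows by at most $\lambda(r)\,\dd t$. Condition \eqref{eq:Compl} is exactly the Osgood-type condition $\int \lambda^{-1}\dd r=\infty$ ensuring that the image curve cannot reach infinity in finite time. I would make this rigorous by constructing a Lyapunov-type functional $F(\text{radius},\text{length})$ with $\Phi(r)=\int_0^r\lambda(s)^{-1}\dd s$ and applying It\^o's formula to show that the expectation of $F$ stays bounded on finite time intervals. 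The main obstacle I expect is the rigorous control of the phase--amplitude feedback, i.e.\ justifying the exponent $1/3$. This requires averaging the two-point motion over the fast rotation, and I would first try a suitably weighted norm $|v_r|^2+a(R)|v_\phi|^2$ with $a(R)\sim|\rho'|^{-2/3}$.

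For part (b), the same weighted norm with $|\rho'|\le K_1 r^3$ gives $\lambda(r)\lesssim K_1^{1/3} r$. Since an OU radius spends time of order $e^{-r^2}$-small probability at large $r$, the growth of the flow at large radii is beaten by the contraction $-1$ near the origin, provided $K_1$ is small. I would then show that for a fixed ball $B$ the images $\varphi(t,\theta_{-t}\omega,B)$ are tight in the space of compact sets. Concretely, their diameter has bounded probability of exceeding $M$ uniformly in $t$, using that the stationary Gaussian measure makes time-averaged exponential growth rates integrable. The standard criterion (weak attractors exist iff the flow is ``compact in probability'' for bounded sets) then yields a weak set attractor. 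Pinning down the universal constant $K_1$ in this step is the delicate part.
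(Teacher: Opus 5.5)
\textbf{Both parts have a genuine gap, and it is the same one: nothing in your plan controls all initial conditions of a set simultaneously at large radius.}

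\textbf{Part (a).} The claimed rate $\lambda(r)\asymp\hat\rho(r)^{1/3}$ is asserted, not derived, and the weighted norm you propose does not give it. Linearising \eqref{eq:SDE_polar}, a tangent vector satisfies $\dd v_r=-v_r\,\dd t+v_\phi\,\dd W_\phi+\dots$ and $\dd v_\phi=-\rho'(R)\,v_r\,\dd t+\dots$. For $N=v_r^2+a\,v_\phi^2$ the shear contributes at most $\sqrt a\,|\rho'(R)|\,N$, and the It\^o term $v_\phi^2\,\dd t$ is at most $a^{-1}N\,\dd t$. Your choice $a\sim|\rho'|^{-2/3}$ balances these and gives a rate of order $|\rho'(R)|^{2/3}$, not $|\rho'|^{1/3}$. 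Brownian scaling of the linearised system shows this is also the true order. Moreover, it holds only in a supermartingale sense; pathwise, $|v|$ can grow at rate $\tfrac12|\rho'(R)|R$.

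The exponent $1/3$ in \eqref{eq:Compl} is not an exponential rate at all. It is the radial \emph{speed} $\Delta R/\Delta T\sim|\rho'|^{1/3}$ of a worst-case trajectory, which is why it appears in an Osgood integral in $\dd r$. A rate per unit time cannot enter $\int\lambda^{-1}\dd r$ that way.

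Your closing comparison also fails. The radius of $\varphi(t,\omega,\gamma)$ is not governed by the OU radii of the endpoints. Under the hypotheses of Theorem \ref{theo:BlowUp}(b), the image of a segment explodes with positive probability, although both endpoints, being fixed initial conditions, survive almost surely. So you only have $\operatorname{rad}\le|\text{endpoint}|+\ell$. Then $\frac{\dd}{\dd t}\log\ell\le\lambda(\ell)$ closes only under $\int^\infty\frac{\dd\ell}{\ell\,\lambda(\ell)}=\infty$. This fails for $\hat\rho(r)\sim r^3$, even though \eqref{eq:Compl} holds there. Note also that the paper explicitly leaves open whether Li's Jacobian-moment criterion applies to \eqref{SDE2}.

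\textbf{Part (b).} Averaging a local expansion rate against the stationary Gaussian law is a one-point, Lyapunov-exponent-type argument. A weak set attractor instead requires controlling $\sup_{x\in B}|\varphi(t,\omega,x)|$, which is not OU-distributed. Such an argument cannot detect the threshold. Condition \eqref{eq:Attr} only concerns $r\ge r^*$ with $r^*$ arbitrary. Take $\rho'$ tiny on a huge ball $[0,r^*]$ and $\rho'(r)=K_2r^3$ beyond: your averaged rate is then negative, yet Theorem \ref{theo:BlowUp}(a) rules out a weak set attractor. The fact that $K_1$ is universal, independent of $r^*$, shows the mechanism must be scale-invariant at large radii and uniform over initial conditions. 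Separately, the existence criterion needs a compact $C$ independent of $B$, which tightness for one fixed ball does not supply.

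\textbf{The missing idea,} used for both parts in the paper, is Lemma \ref{lemm:contraction}. In the co-rotating frame one has $|X(t)|\le e^{-t}|x|+\sup_{s}|\int_s^t\mathcal R(\chi_X(u))\,\dd W(u)|$. While $|X|$ stays in an annulus, $\chi_X\in L(b,\beta)$. Hence a single pathwise supremum $Q_{b,\beta,T}$ over all such rotation profiles bounds every trajectory from a ball at once (Lemma \ref{lemm:contr2}). Its law satisfies $Q_{b,\beta,T}\eqdist Q_{0,\beta,T}$ and $Q_{0,\kappa\beta,\kappa^{-1}T}\eqdist\kappa^{-1/2}Q_{0,\beta,T}$. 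So with $q_n=\hat\rho(r_{n+1})^{-1/3}$ and $t_n=q_n^2$ for (a), or $r_n=Kn^{1/2}$ and $t_n=3/n$ for (b), the probabilities in Lemma \ref{lemm:ballcrit} are bounded below uniformly in $n$. In (b), $|\rho'|\le Cr^3$ is exactly what keeps $\beta_nt_n$ bounded. That ball-tracking criterion, combining the strong Markov property with a three-series argument for (a) and a Markov-renewal argument for (b), then yields strong completeness and the weak attractor.
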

The proof, which is based on tracking the growth of a discrete set of bounded balls, similar to \cite{LeimbachMattinglyScheutzow22}, is carried out in Section \ref{sec:Attr2}. 

The criteria introduced in Theorem \ref{theo:attractors2} are tight in terms of the exponent, as the following result shows.
\begin{theoremC}\label{theo:BlowUpShort}
The following hold:
\begin{enumerate} 
\item[(a)] There exists a constant $K_2>0$, independent of $\rho$, with the property that if the function $\rho$, for some $r^*>0$,  satisfies
\begin{equation}
 |\rho'(r)|\geq K_2 \,r^3 \quad \forall \, r\geq r^*,
\tag{NoAtt} \label{eq:NoAttr}
\end{equation}
as well as
\begin{equation}\label{eq:BasicAssumption}
    \limsup_{r \to \infty}\frac{|\rho''(r)|}{|\rho'(r)|}<\infty, \quad \text{and}\quad |\rho'(r)| \text{ increases monotonically on }[r^*,\infty),
\end{equation}
then the RDS $\varphi$ induced by \eqref{SDE2} does not admit a weak set attractor.

\item[(b)]If, moreover, 
\begin{equation}
\int_{r^*}^\infty \big|\rho'(r)\big|^{-1/3}\,\dd r<\infty\tag{NoCom} \label{eq:NoCompl},
\end{equation}
then the RDS $\varphi$ induced by \eqref{SDE2} is not strongly complete. In particular, almost surely, there exists an initial condition for which the solution to \eqref{SDE2} goes to infinity in finite time.
\end{enumerate}
\end{theoremC}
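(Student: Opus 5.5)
The plan is to track the growth of small curves (or thin sets) of initial conditions, using the shear to stretch them radially, and then to bootstrap. The key mechanism is as follows. Take a short radial segment at radius $r$. Its endpoints differ in radius by $\delta$, so under the flow they rotate at angular velocities differing by roughly $|\rho'(r)|\delta$. After a time $\tau$ the segment has been sheared into an arc of length about $r|\rho'(r)|\delta\tau$, and this arc wraps around the circle of radius $r$. The pathwise noise acting on the radius is $\cos\phi\,\dd W_1+\sin\phi\,\dd W_2$. Once the arc is spread in angle, the noise therefore pushes different parts of it in different radial directions. In particular, the part of the arc pointing in the direction of the current Brownian increment gains radius of order $\sqrt{\tau}$.

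The deterministic contraction $-R\,\dd t$ costs radius of order $r\tau$ over the same time. The natural time scale is the one on which a radial spread of order $1/r$ (the angular noise scale $\sigma/R$) generates an $O(1)$ angular spread, namely $|\rho'(r)|\tau/r\sim 1$. More precisely, I would choose the scale so that diffusion in radius over time $\tau$ (size $\sqrt\tau$) times $|\rho'|$ times $\tau$ gives $O(1)$ angle, i.e.\ $\tau\sim|\rho'(r)|^{-2/3}$. The corresponding radial gain is $\sqrt{\tau}\sim|\rho'(r)|^{-1/3}$, and the contraction loss is $r\tau\sim r|\rho'(r)|^{-2/3}$. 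Under \eqref{eq:NoAttr} with $|\rho'|\ge K_2r^3$, the loss is at most $K_2^{-2/3}r^{-1}$ while the gain is at least a multiple of $K_2^{-1/3}r^{-1}$. So for $K_2$ large the gain dominates, and one gets a supermartingale-type (actually submartingale) drift upward for the radius of the "best" point in the set.

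Concretely, I would define a discrete scheme of times $t_{n+1}=t_n+\tau(r_n)$, with $r_n$ the maximal radius attained by $\varphi(t_n,\omega,B)$ for a suitable set $B$ (for instance a small ball or segment, which stays a connected curve by continuity of the local flow). I would then prove a one-step lemma: conditionally on $\mathcal F_{t_n}$, with probability at least $p>1/2$ (uniformly in $n$ for $r_n$ large) one has $r_{n+1}\ge r_n+a|\rho'(r_n)|^{-1/3}$, and otherwise $r_{n+1}\ge r_n-b|\rho'(r_n)|^{-1/3}$ with $a>b\cdot\frac{1-p}{p}$.

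The lemma has two ingredients. The first is a shear stage: within a short fraction of $\tau$, the image curve becomes angularly spread over a full circle near radius $r_n$. This requires the curve to have radial extent $\gtrsim|\rho'|^{-1/3}$, which is itself maintained by the previous step. The second is the noise stage: some point of the curve moves outward by $\gtrsim\sqrt\tau$. Condition \eqref{eq:BasicAssumption} is used here, to say that $\rho'$ is essentially constant on the window $[r_n-1,r_n+1]$ so the shear linearization is valid, and that $\tau(r)$ varies slowly. Monotonicity of $|\rho'|$ lets me compare step sizes.

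A random walk with positive drift then gives $r_n\to\infty$ with positive probability. This contradicts attraction to a compact random set in probability, which proves part (a). It also gives growth of the diameter of a compact set.

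For (b), the elapsed time is
\[
\sum_n\tau(r_n)=\sum_n|\rho'(r_n)|^{-2/3}\approx\sum_n |\rho'(r_n)|^{-1/3}\,(r_{n+1}-r_n)\cdot\text{const}\approx\int^\infty|\rho'(r)|^{-1/3}\,\dd r<\infty
\]
by \eqref{eq:NoCompl}. Hence on the event of escape, the radius reaches infinity in finite time. A nested compactness argument, choosing subcurves at each step, then produces a single initial condition that blows up. I would upgrade positive probability to probability one using a zero–one argument: escape can be attempted from fresh balls on successive time intervals, and the events are independent by the Markov property and stationarity of the noise.

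The main obstacle is the one-step lemma, in particular controlling the interplay between the shear stage and the noise stage pathwise and uniformly for the whole curve. I would attempt it by working in a rotating frame at radius $r_n$ and linearizing, which reduces it to an explicit Gaussian computation.
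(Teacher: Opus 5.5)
Your scaling heuristic, the grid spacing $q\sim|\rho'(r)|^{-1/3}$ with time scale $q^2$, the biased random walk, and the time summation for (b) all agree with the paper. One slip: under \eqref{eq:NoAttr} the gain $|\rho'|^{-1/3}$ is at most, not at least, $K_2^{-1/3}r^{-1}$. What matters is the ratio loss/gain $=r|\rho'(r)|^{-1/3}\le K_2^{-1/3}$, which is the paper's condition $r_nq_n\le\varepsilon^*/2$.

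The genuine gap is the one-step lemma. All the difficulty lies there, and the route you propose would fail. Linearizing $\rho$ in a rotating frame does not give a Gaussian problem. Each point's radius is driven by $\cos\phi\,\dd W_1+\sin\phi\,\dd W_2$, with $\phi$ determined by that point's own radial history, so even two points form a nonlinear diffusion $(R_1,R_2,\vartheta)$.

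More importantly, your heuristic does not explain why the success probability exceeds $\frac12$ strictly. In rescaled coordinates a single point's radius is a supermartingale, so for any $a,b$ it gives $pa-(1-p)b\le 0$. The positive drift must therefore come entirely from the interaction of several points of the set.

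The missing idea is to track just two points of a connected set, $y$ at $r_n$ and $z$ at $r_{n-1}$, and to split the success probability as $p_1+p_2$:
\begin{itemize}
\item $p_1$ is the probability that the top point alone reaches $r_{n+1}$ before $r_{n-1}$ and before time $t_n$. It tends to $\frac12$ as the rescaled drift vanishes.
\item $p_2$ is the probability that the top point eventually exits downward, but the lower point reaches $r_{n+1}$ before that. It only needs to be \emph{positive}, uniformly over the class of rescaled coefficients $(g_n,h_n,\gamma_n)$.
\end{itemize}
The paper gets this positivity from the support theorem for the Stratonovich rough-path lift, combined with coefficient-uniform continuity of the It\^o--Lyons map. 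The argument is built around a control that does nothing until the shear has turned $\vartheta=\phi_2-\phi_1$ to $\pi/2$, and then pushes along $\overline W_2$, which moves $R_2$ but not $R_1$.

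Further steps fail as stated:
\begin{itemize}
\item The radial extent $\gtrsim q$ is not ``maintained by the previous step''. After an unfavourable step nothing prevents the image curve from contracting, and a small ball has no such extent to begin with. The paper enforces it through connectedness and an anchor point $x_0\in B_1$. By the $\rho$-independent Ornstein--Uhlenbeck law of $|X|$ and the law of the iterated logarithm, the anchor's radius stays below $r_{K_i}$ with $K_i=n+\lceil\alpha i\rceil$. Hence the image always meets both $\partial B_{N_i-1}$ and $\partial B_{N_i}$. This is why the criterion is stated for connected sets in $\mathcal I_{2n}$, such as a radial segment from the origin.
\item With deterministic step lengths, ``otherwise $r_{n+1}\ge r_n-bq$'' fails with positive probability because of Gaussian tails. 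Using exit times of $r_{n\pm1}$, capped at $t_n$, makes every step exactly $\pm1$ on the grid.
\item For (b) no nested choice of subcurves is needed. If $\operatorname{rad}(\varphi(t,\omega,\mathcal M))$ is unbounded as $t\nearrow T_\infty<\infty$, compactness of $[0,T_\infty]\times\mathcal M$ forces $\tau^\star(\omega,\mathcal M)\le T_\infty$.
\item ``Almost surely'' follows because the walk started at a high level stays above $K_i$ with probability $\ge 1-\varepsilon$. Your zero--one argument would additionally need the blow-up points found for $\theta_{kT}\omega$ to lie in the range of $\varphi(kT,\omega,\cdot)$.
\end{itemize}
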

The proof, which is based on an iterative scheme tracking two points at a time, loosely inspired by \cite{CranstonScheutzowSteinsaltz99}, is carried out in Section \ref{sec:BlowUp}. 

Together, Theorems \ref{theo:attractors2} and \ref{theo:BlowUp} allow us to fully determine whether the induced RDS $\varphi$ is strongly complete for all functions $\rho$ that satisfy \eqref{eq:BasicAssumption}. This class includes, among others, all polynomials of degree at least 2.

\subsection{Shear-induced Blow-up}
Theorem \ref{theo:BlowUp} is somewhat surprising. Note that for any $c>0$ and any $\rho$, the ODE given by \eqref{SDE} with $\sigma = 0$ is not only strongly complete, but even coercive, which means that any solution $X(t)$ satisfies $|X(t)| \leq e^{-ct}\, |X(0)|$. If $\rho$ satisfies \eqref{eq:BasicAssumption} and \eqref{eq:NoCompl}, such as, for example, $\rho(r) = r^5$, then, by Theorem \ref{theo:BlowUp}, the addition of a small amount of additive noise can destroy the strong completeness. In other words, even though both the drift and the diffusion are well-behaved individually, their interaction can cause solutions to exceptional initial conditions to blow up in finite time. To the authors' knowledge, the SDE \eqref{SDE} with $\rho$ satisfying the conditions of Theorem \ref{theo:BlowUp} (b) is the first example of a drift $F$ satisfying $\langle x, F(x)\rangle \leq c |x|^2$ for some $c>0$,that allows for finite time blow-up once noise is added. This possibility was already suggested in \cite[Section 7]{SS17}.

Since the drift only decreases the radius, the noise must be responsible for pushing the trajectory away from the origin. In fact, the two-dimensional Brownian motion $W(t)$ already travels infinitely far in finite time, which is reflected in the fact that the path of Brownian motion is almost surely not of finite variation. Yet, without drift, the trajectory of Brownian motion over a finite amount of time is almost surely bounded. Understanding Brownian motion as the path of a particle subjected to infinitely many infinitesimal, independent, identically distributed kicks, this can be attributed to a destructive interference between the individual kicks. Even though the drift does not increase the radius directly, the $\rho$-dependent shear term can disrupt this deconstructive interference through the $\phi$-dependence of $W_R$, see Equations \eqref{eq:SDE_polar} and \eqref{eq:SDE_polar_simple}. Theorem \ref{theo:BlowUp} now states that for almost every realization of the Brownian motion, there exists some initial condition, for which the solution trajectory of the SDE \eqref{SDE} ``untwists" the Brownian motion enough to cause the solution trajectory to reach infinity in finite time, see Figure \ref{fig:untangle}.
\begin{figure}
    \centering
    \begin{overpic}[width=0.8\linewidth]{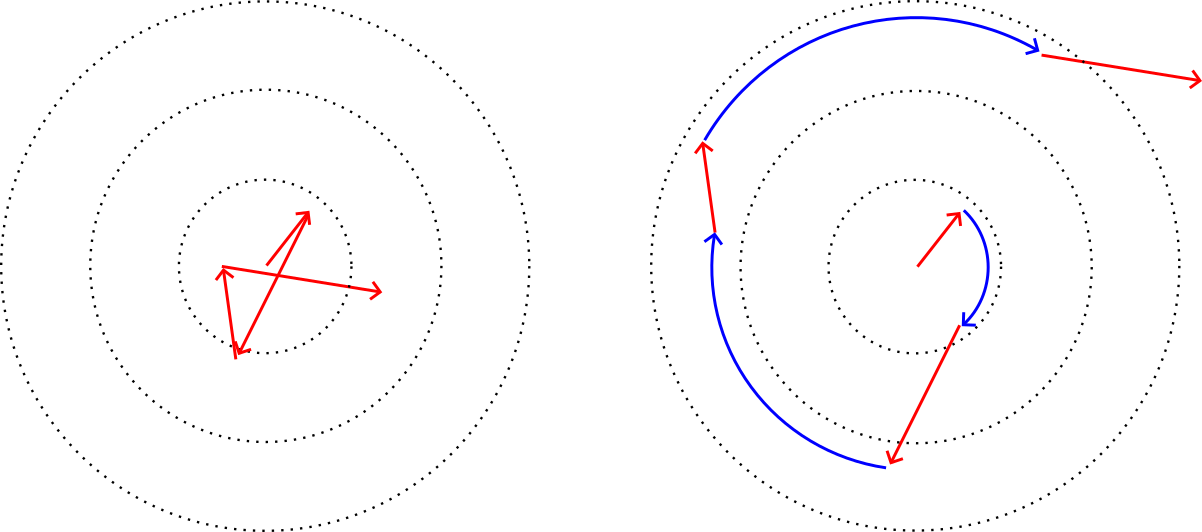}
        \put(20,-5){(A)}
        \put(74,-5){(B)}
    \end{overpic}\vspace{1.5em}
    \caption{In (A), the red arrows show the trajectory of a Brownian motion started at the origin, schematically drawn as four consecutive kicks. Due to cancellation, the trajectory stays comparatively close to the origin. In (B), a schematic solution of \eqref{SDE} is shown, driven by the same realization of the Brownian motion as in (A). For simplicity, the dissipation is neglected and the trajectory is drawn as a composition of the four Brownian motion kicks shown in red interposed by the $\rho$-dependent drift shown in blue. This drift reduces the cancellation causing the radius to grow much more quickly compared to the Brownian motion in (A).
    The dotted circles are centered at the origin and serve as a reference.}
    \label{fig:untangle}
\end{figure}

The exponents in \eqref{eq:NoAttr}, and \eqref{eq:NoCompl} can be explained by the following heuristic. Consider the SDE \eqref{eq:SDE_polar} started at a point $X(0) = X\in \R^2$ with $|X|=R$ large enough, such that the terms in \eqref{eq:SDE_polar} involving $R^{-1}$ can be neglected. Suppose that, with prior knowledge of the noise, the angle $\phi$ of $X$ was chosen in such a way that the diffusion $\dd W_R= \cos(\phi)\,\dd W_1+\sin(\phi)\,\dd W_2$ increases the radius $R$ over a small time $\Delta T$. We denote this increase by $\Delta_{\operatorname{Diff}}R$ and note that $\Delta_{\operatorname{Diff}}R \sim \sqrt{\Delta T}$. During the same time, the drift pushes $X$ closer to the origin, leading to a decrease of $R$ on the order of $\Delta_{\operatorname{Drift}}R \sim R\,\Delta T$. If we want $R$ to grow, we need $\Delta_{\operatorname{Diff}}R>\Delta_{\operatorname{Drift}}R$ or equivalently $\Delta T \lesssim R^{-2}$. Over this period of time, we can expect $R$ to grow, assuming that $\phi$ started at a favorable angle. For $R$ to continue growing, the shear must move $\phi$ to the next favorable angle sufficiently quickly, that is, in at most a time of order $\Delta T$. 
Since during this time $R$ changes by an amount of order $$\Delta R \sim \Delta_{\operatorname{Diff}}R \sim \sqrt{\Delta T},$$ 
the angle displacement of two different trajectories starting at radius $R$ is of the order $$\Delta \phi \sim \Delta T\,\Delta \rho \sim \Delta T\, |\rho'(R)|\, \Delta R \sim \Delta T^{3/2} \,|\rho'(R)|.$$
To ensure that we can reach any desired angle in time $\Delta T$, we require $\Delta \phi$ to be of order at least 1. Recalling the relation $\Delta T \lesssim R^{-2}$ this is equivalent to
$$|\rho'(R)| \gtrsim \Delta T^{-3/2} \gtrsim R^3,$$
heuristically explaining the exponent in \eqref{eq:NoAttr}. To also explain the condition \eqref{eq:NoCompl}, we need to consider the speed at which $R$ can grow, which is given by
$$\frac{\Delta R}{\Delta T} \sim  \frac{\sqrt{\Delta T}}{\Delta T}= \Delta T^{-1/2}.$$
Choosing $\Delta T$ as small as possible, that is $\Delta T \sim |\rho'(R)|^{-2/3}$, we get 
$$\frac{\Delta R}{\Delta T} \sim |\rho'(R)|^{1/3}.$$ 
If $\hat R> R(0)$ is an even larger radius, then the time $T_{\hat R}$ required to reach $R(T_{\hat R}) = \hat R$ is given by
$$ T_{\hat R} = \int_{R(0)}^{\hat R} \frac{\Delta T}{\Delta R}\,\dd R \sim \int_{R(0)}^{\hat R} |\rho'(R)|^{-1/3}\,\dd R.$$
Under \eqref{eq:NoCompl}, this remains bounded as $\hat R \to \infty$ implying a blow-up in finite time.

\subsection{Outlook to Part II}
In an upcoming second part of this work, we will discuss linear stability, determined by the leading Lyapunov exponent, and its implications for synchronization \cite{FGS1} for model~\eqref{SDE}. In particular, we will use Equation \eqref{SDE} as an example to study synchronization for SDEs that are not strongly complete. Furthermore, we will investigate how the geometry of the blow-up set of initial conditions without global solutions is affected by the sign of the top Lyapunov exponent.

\subsection{Related literature}
\emph{Strong Completeness for SDEs:} A general criterion for strong completeness of SDEs based on the moments of the Jacobian of the stochastic flow was derived by Li in \cite{Li94}. We leave it as an open problem to determine for which functions $\rho$ this condition is satisfied. 
Li and Scheutzow \cite{LiScheutzow11} constructed an SDE with no drift and degenerate multiplicative noise that is not strongly complete. Notably, the diffusion coefficients are smooth and globally bounded.
Leimbach and Scheutzow \cite{leimbach2017blow} constructed an SDE with additive noise for which the stochastic flow is weakly but not strongly complete. Without the noise, solutions to the deterministic ODE blow up in finite time for some initial conditions. The addition of noise restores weak but not strong completeness.
Scheutzow and Schulze \cite{SS17} showed strong completeness for SDEs with additive noise under a one-sided Lipschitz condition and at most quadratic growth of the rotational component of the drift.
Riedel and Scheutzow \cite{riedel2017rough} derived a similar condition for rough differential equations.
Recently, Li and Ying \cite{li2025strong} relaxed these assumptions, allowing a stronger growth of the rotational component of the drift. Their conditions can be shown to be tight in the regularity of the driving signal. Applying their conditions to our setting exactly yields Theorem \ref{theo:attractors1} (a). In contrast to the argument we use in the proof of Theorem \ref{theo:attractors2} (a), their argument is purely deterministic and holds even if the Brownian motion in \eqref{SDE2} is replaced by any deterministic path of the class $C^{1/2-}$.

\emph{Attractors for Random Dynamical Systems:} The notion of attractors used in this work was originally introduced by Crauel and Flandoli \cite{cf94}. General criteria for their existence were introduced by Crauel, Dimitroff, and Scheutzow in \cite{cds09} (see also \cite{crauelgeissscheutzow2023} for a corrigendum). Leimbach, Mattingly, and Scheutzow \cite{LeimbachMattinglyScheutzow22} constructed a planar velocity field, for which the deterministic flow experiences blow-up in finite time, while additive noise stabilizes the flow in the sense that it is strongly complete and admits a weak set attractor. This construction can be seen as a converse to the drift of \eqref{SDE2} with $\rho$ satisfying \eqref{eq:NoCompl}, where the deterministic flow is globally well-posed while the SDE is not strongly complete. The proof of Lemma \ref{lemm:ballcrit} below is heavily inspired by the proof for the existence of a set attractor in \cite{LeimbachMattinglyScheutzow22}.

\emph{Shear-Induced Chaos:} The interplay between radius-dependent rotational drift and additive white noise, that is responsible for shear-induced blow-up, can also lead to chaotic dynamics indicated by a positive top Lyapunov exponent. This effect is known as shear-induced chaos \cite{LinYoung08} and inspired the name ``shear-induced blow-up''. Shear-induced chaos has been demonstrated in both deterministic \cite{wang2003strange} and stochastic \cite{EngelEtAl19, breden2023computer,ChemnitzEngel23,Baxendale24} settings. Depending on the function $\rho$, shear-induced chaos is possible in the SDE \eqref{SDE} as well, an aspect that will be discussed in detail in the upcoming second part of this work.

\emph{Strong Completeness and Attractors for SPDEs:} Global well-posedness of stochastic flows has also been studied in the setting of (singular) stochastic partial differential equations, with positive results for the $\Phi^4_3$ equation by Mourrat and Weber \cite{MourratWeber17b} and for the 2-dimensional Navier--Stokes equation with additive space-time white noise  by Hairer and Rosati \cite{HairerRosati24}. Additionally, Gess and Tsatsoulis \cite{gess2020synchronization} showed synchronization, i.e., the existence of a singleton attractor, for the stochastic quantisation equation in 2 and 3 dimensions.
\subsection{Structure of the paper}
In Section \ref{sec:preliminaries}, we recall some basic notions from the theory of random dynamical systems. In Section \ref{sec:ProofAttrI} we prove Theorem \ref{theo:attractors1}, in Section \ref{sec:Attr2} we prove Theorem \ref{theo:attractors2}, and in Section \ref{sec:BlowUp} we state and prove the full version of Theorem \ref{theo:BlowUp}. In Sections \ref{sec:Attr2GC} and \ref{sec:BlowUpCrit}, we first introduce new general criteria for strong completeness and the existences of weak set attractors, respectively the lack thereof, which we then use to prove Theorems \ref{theo:attractors2} and \ref{theo:BlowUp}. Parts of Section \ref{sec:Attr2} already appeared in the first author's PhD thesis \cite{Chemnitz2025Stability}.

\section{Preliminaries}\label{sec:preliminaries}

In this section, we recall the definitions of local random dynamical systems, of weak and strong completeness, and of weak set attractors and pull-back set attractors. Furthermore, we discuss the construction of the local RDS induced by an SDE with additive noise and locally Lipschitz continuous drift.

\subsection{Local RDS, Strong Markov Property, and Completeness}
We start by recalling the definition of a local random dynamical system, see \cite[Section 1.2]{a98}.
\begin{definition}[Metric Dynamical System]
    A \emph{metric dynamical system} (MDS) is a tuple $(\Omega,\mathcal F,\mathbb P,(\theta_t)_{t\in \mathbb R})$, such that the following hold. 
    \begin{enumerate}
        \item[(i)] $(\Omega, \mathcal F, \mathbb P)$ is a probability space.
        \item[(ii)] For each $t \in \mathbb R$, $\theta_t: \Omega \to \Omega$ is a map such that $(\omega, t)\mapsto \theta_t \omega$ is jointly measurable, $\theta_0 = \operatorname{id}_\Omega$, and $\theta_t \circ \theta_s = \theta_{s+t}$ for all $s, t\in \mathbb R$.
        \item[(iii)] The probability measure is $\theta$ invariant, that is $\mathbb P(\theta_t^{-1}(E)) = \mathbb P(E)$ for all $t\in \mathbb R$ and $E \in \mathcal F$.
    \end{enumerate}
\end{definition}
\begin{remark}
    We purposefully avoid asking for $(\Omega, \mathcal F, \mathbb P)$ to be complete. In fact, replacing $\mathcal F$ by its $\mathbb P$-completion $\bar {\mathcal F}$ can destroy the joint measurability of $\theta$ in many relevant cases, see Remark \ref{ref:Completion} below.
\end{remark}
For some topological space $\mathcal X$, let $\mathcal B(\mathcal X)$ be the Borel $\sigma$-algebra on $\mathcal X.$ To introduce local random dynamical systems on $\mathbb R^d$, we augment the state space with a cemetery state $\Delta$. Let $\mathbb R^d_\Delta\coloneqq\mathbb R^d \cup\{\Delta\}$ be the one-point compactification of $\mathbb R^d$, such that $x_n \to \Delta$ if and only if $|x_n| \to \infty$.

\begin{definition}[Continuous Local Random Dynamical System]\label{def:LocalRDS}
    Let $(\Omega,\mathcal F,\mathbb P,(\theta_t))$ be a MDS. A \emph{continuous local random dynamical system} on $\mathbb R^d$ over this MDS is a map $\varphi:[0,\infty) \times \Omega \times \mathbb R^d_\Delta \to \mathbb R^d_\Delta$ such that the following hold.
    \begin{enumerate}
        \item[(i)] The map $\varphi$ is jointly measurable with respect to $\mathcal B([0,\infty)) \otimes \mathcal F \otimes \mathcal B(\mathbb R^d_\Delta)$.
        \item[(ii)] The state $\Delta \in \mathbb R^d_\Delta$ is a cemetery state, that is, for all $t \geq 0$ and all $\omega \in \Omega$ we have $\varphi(t, \omega, \Delta) = \Delta$.
        \item[(iii)] The map $\varphi$ satisfies the (perfect) cocycle property, that is, for all $t,s \geq 0$, $\omega \in \Omega$ and $x \in \mathbb R^d_\Delta$, we have
        $$\varphi(0, \omega, x) = x \quad \text{and}\quad \varphi(s+t, \omega, x) = \varphi(t, \theta_s \omega, \varphi(s,\omega, x)).$$
        \item[(iv)] For every $\omega \in \Omega$ and every $x \in \mathbb R^d$ the path $t\mapsto \varphi(t, \omega, x) \in \mathbb R^d_\Delta$ is continuous, for every $\omega \in \Omega$ the survival domain 
        $$D_\varphi(\omega) = \{(t,x): \varphi(t, \omega, x) \neq \Delta\}$$
        is open, and for each $\omega \in \Omega$, the map $\varphi(\cdot,\omega, \cdot)|_{D_\varphi(\omega)}$ restricted to the survival domain is jointly continuous. 
    \end{enumerate}
    In the following, we often omit the qualifier ``continuous'' and simply write ``local RDS''.
\end{definition}

For every $x\in \mathbb R^d$ and every subset $E \subseteq \mathbb R^d$, we define the blow-up time $\tau^\star$ by 
\begin{align*}
    \tau^\star(\omega,x) \coloneqq \inf \{t \geq 0: \varphi(t, \omega, x) = \Delta\} \in (0,\infty] \quad \text{and}\quad \tau^\star(\omega, E) &:= \inf_{x \in E} \tau^\star(\omega, x)\in [0,\infty].
\end{align*}

\begin{definition}[Filtered Random Dynamical System]\label{def:filtered}
    A \emph{filtered local random dynamical system} is a local RDS $(\theta, \varphi)$ for which the probability space $(\Omega, \mathcal F, \mathbb P)$ is equipped with a two-sided filtration $(\mathcal F_{s,t})_{-\infty < s\leq t < \infty}$ satisfying the following.
    \begin{enumerate}
        \item[(i)] For each $-\infty< s\leq t\leq u \leq v < \infty$, we have $\mathcal F_{t,u} \subseteq \mathcal F_{s, v}$.
        \item[(ii)] For each $-\infty< s\leq t< \infty$ and each $u \in \mathbb R$, we have $\theta_u^{-1}(\mathcal F_{s,t}) = \mathcal F_{s+u, t+u}.$
        \item[(iii)] For every $t \geq 0$, the map $\varphi(t, \cdot, \cdot): \Omega \times \mathbb R^d_\Delta \to \mathbb R^d_\Delta$ is measurable with respect to $\mathcal F_{0,t} \otimes \mathcal B(\mathbb R^d_\Delta)$.
        \item[(iv)] For each $t\geq 0$ and $s \geq 0$, the restricted shift $\theta: \Omega \times [0,t] \to \Omega$ is measurable as a map between the measurable spaces
        \begin{equation}\label{eq:prog}
            \theta: \big(\Omega \times [0,t], \mathcal F_{0, t+s} \otimes \mathcal B([0,t])\big) \to \big(\Omega, \mathcal F_{0,s}\big).
        \end{equation}
    \end{enumerate}
\end{definition}

For simplicity, we write $(\mathcal F_t)_{t \geq 0}$ for the forward filtration $\mathcal F_t\coloneqq \mathcal F_{0,t}$ and $(\mathcal F_t^+)_{t\geq 0}$ for its right-continuous modification
$$\mathcal F_t^+\coloneqq \bigcap_{\varepsilon>0} \mathcal F_{t+\varepsilon}.$$
We collect some basic properties of filtered local RDS.
\begin{proposition}\label{prop:tau}
    Let $(\theta,\varphi)$ be a filtered local RDS. Then the following hold.
    \begin{enumerate}
        \item[(i)] For every compact $C\subset \mathbb R^d$, the random time $\tau^\star(\omega,C)$ is an $(\mathcal F_t)$-stopping time, so, in particular, $\tau^\star(\omega,x)$ is an $(\mathcal F_t)$-stopping time for every $x \in \mathbb R^d$.
        \item[(ii)] For every set $E \subseteq \mathbb R^d$ that is a countable union of compact sets, the random time $\tau^\star(\omega,E)$ is an $(\mathcal F_t^+)$-stopping time, so, in particular, $\tau^\star(\omega,\mathbb R^d)$ is an $(\mathcal F_t^+)$-stopping time.
        \item[(iii)] For every finite $(\mathcal F_t^+)$-stopping time $\tau$, we have the inclusion
        \begin{equation}\label{eq:TauShift}
            \theta_\tau^{-1}(\mathcal F_t^+) \coloneqq \big\{\{\omega \in \Omega: \theta_{\tau(\omega)} \omega \in E\} : E \in \mathcal F_t^+\big\} \subseteq \mathcal F_{t+\tau}^+.
        \end{equation}
        Here $\mathcal F_{t +\tau}^+$ denotes the stopped $\sigma$-algebra
        $$\mathcal F_{t+ \tau}^+\coloneqq \big\{E\in \mathcal F: E\cap \{\omega \in \Omega :\tau(\omega) \leq s\} \in \mathcal F_{t+s}^+, ~\forall\, s\geq 0\big\}$$
        In particular, if $\tau_1$ and $\tau_2$ are finite $(\mathcal F_t^+)$-stopping times, their concatenation 
        \begin{equation}\label{eq:tauSum}
            \tau_3(\omega) \coloneqq \tau_1(\omega) + \tau_2(\theta_{\tau_1(\omega)}\omega)
        \end{equation}
        is an $(\mathcal F_t^+)$-stopping time as well.
    \end{enumerate}
\end{proposition}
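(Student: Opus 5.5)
We prove the three items in order, using only the axioms of a filtered local RDS (Definitions \ref{def:LocalRDS} and \ref{def:filtered}), in particular the openness of the survival domain $D_\varphi(\omega)$ and the continuity of $\varphi$ on it.

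\textbf{Item (i).} Fix a compact $C\subset\mathbb R^d$ and $t\ge 0$. We aim to show
$$\{\tau^\star(\cdot,C)> t\} = \{\varphi(t,\omega,x)\neq\Delta \ \forall x\in C\}.$$
The inclusion ``$\subseteq$'' is immediate, since $\Delta$ is a cemetery state. For ``$\supseteq$'', note that $D_\varphi(\omega)$ is open and contains $\{t\}\times C$ together with $[0,t]\times C$. The latter holds because survival up to time $t$ implies survival on $[0,t]$, again by the cemetery property. By compactness there is $\delta>0$ with $[0,t+\delta]\times C\subseteq D_\varphi(\omega)$, hence $\tau^\star(\omega,C)>t$.

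It remains to show this event lies in $\mathcal F_t$. The map $x\mapsto\varphi(t,\omega,x)$ is continuous from $C$ into $\mathbb R^d$ on this event, so we can describe the event via a countable dense set $Q\subseteq C$:
$$\{\tau^\star(\cdot,C)>t\}=\bigcup_{N\in\mathbb N}\bigcap_{q\in Q}\{|\varphi(t,\omega,q)|\le N\}.$$
For ``$\subseteq$'', the continuous image of $C$ is compact, hence bounded. For ``$\supseteq$'', I use the openness of $\{x:\varphi(t,\omega,x)\neq\Delta\}$, a slice of $D_\varphi(\omega)$, together with continuity into $\mathbb R^d_\Delta$. If some $x\in C$ had $\varphi(t,\omega,x)=\Delta$, I pick $q_n\to x$. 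The points $(t,q_n)$ either leave the survival domain or satisfy $|\varphi(t,\omega,q_n)|\to\infty$. Here I need that $x\mapsto\varphi(t,\omega,x)$ is continuous as a map into $\mathbb R^d_\Delta$ at points where it equals $\Delta$. I expect this to be the main obstacle: it is not directly part of Definition \ref{def:LocalRDS}(iv). I would derive it from the fact that $\varphi(\cdot,\omega,x)$ reaches $\Delta$ continuously at time $\tau^\star(\omega,x)\le t$, combined with joint continuity on $D_\varphi(\omega)$ and the cocycle property. Each set $\{|\varphi(t,\cdot,q)|\le N\}$ is $\mathcal F_t$-measurable by Definition \ref{def:filtered}(iii), which completes (i).

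\textbf{Item (ii).} Write $E=\bigcup_n C_n$ with each $C_n$ compact. Then $\tau^\star(\omega,E)=\inf_n\tau^\star(\omega,C_n)$. A countable infimum of $(\mathcal F_t)$-stopping times is an $(\mathcal F^+_t)$-stopping time, since
$$\{\inf_n\tau_n<t\}=\bigcup_n\{\tau_n<t\}\in\mathcal F_t .$$

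\textbf{Item (iii).} The plan is to discretize the stopping time. Let $\tau_n:=2^{-n}\lceil 2^n\tau\rceil$, so that $\tau_n\downarrow\tau$ and each $\tau_n$ is a countably valued $(\mathcal F_t^+)$-stopping time. For $E\in\mathcal F_t^+$ and each value $k2^{-n}$, the sets
$$\{\tau_n=k2^{-n}\}\cap\theta_{k2^{-n}}^{-1}E$$
lie in $\mathcal F_{t+\tau_n}^+$ by Definition \ref{def:filtered}(ii). Passing from $\tau_n$ to $\tau$ is delicate, because $\theta_{\tau_n}\omega$ does not converge to $\theta_\tau\omega$ in any topological sense. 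Instead I would use the progressive measurability in Definition \ref{def:filtered}(iv). The map $\omega\mapsto(\omega,\tau(\omega)\wedge s)$ is $\mathcal F^+_{s}$-measurable into $\mathcal F^+_s\otimes\mathcal B([0,s])$. Composing with \eqref{eq:prog}, applied with $t+\varepsilon$ in place of $s$, gives that $\{\theta_\tau\omega\in E\}\cap\{\tau\le s\}\in\mathcal F_{t+s+\varepsilon}$ for every $\varepsilon>0$ and every $E\in\mathcal F_{t+\varepsilon}$. Intersecting over $\varepsilon$ yields \eqref{eq:TauShift}.

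Finally, for the concatenation $\tau_3=\tau_1+\tau_2\circ\theta_{\tau_1}$, write
$$\{\tau_3<u\}=\bigcup_{r\in\mathbb Q}\{\tau_1<r\}\cap\{\tau_2\circ\theta_{\tau_1}<u-r\}.$$
By \eqref{eq:TauShift}, each set in the union lies in $\mathcal F^+_{\tau_1+u-r}$, and intersected with $\{\tau_1<r\}$ it lies in $\mathcal F_u^+$. Hence $\tau_3$ is an $(\mathcal F_t^+)$-stopping time.
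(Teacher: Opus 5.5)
\textbf{The gap is in item (i).} Your items (ii) and (iii) are fine and follow the paper. In (iii), composing with $\omega\mapsto(\omega,\tau(\omega)\wedge s)$ for each fixed $\varepsilon$, and only then intersecting over $\varepsilon$, is a clean way to carry out the paper's progressive-measurability step; the dyadic discretization you start with is not needed.

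The step that fails is the inclusion ``$\supseteq$'' in
\[
\{\tau^\star(\cdot,C)>t\}=\bigcup_{N\in\mathbb N}\bigcap_{q\in Q}\{|\varphi(t,\omega,q)|\le N\}.
\]
As you note, it needs $x\mapsto\varphi(t,\omega,x)$ to be continuous into $\mathbb R^d_\Delta$ at points where it equals $\Delta$. This does not follow from the axioms, and it is false in general. Joint continuity on $D_\varphi(\omega)$ only tells you that points $q_n\to x_0$ are far out at some time $s<\tau^\star(\omega,x_0)$. Nothing in the cocycle property prevents them from coming back by time $t$.

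Here is a counterexample. Take the trivial MDS and the maximal flow of the smooth ODE on $\mathbb R^2$ obtained by transporting, via stereographic projection from the north pole $N$, the unit-speed rotation field of $S^2$ about an axis not passing through $N$. This is a (filtered) local RDS.
\begin{itemize}
\item Points $x_0$ on the image of the great circle through $N$ (a line) blow up in finite time.
\item Every other point is periodic and never blows up.
\item At time $t=\tau^\star(x_0)+\pi$, a small closed disk $C$ around $x_0$ has been rotated to a small cap around the south pole, so every point of $C$ off the line sits near the origin.
\end{itemize}
So for a countable dense $Q\subseteq C$ avoiding the line, $\sup_{q\in Q}|\varphi(t,q)|<\infty$, although $\tau^\star(C)<t$. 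Your identity therefore fails, and the proposed derivation of the needed continuity cannot be carried out.

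\textbf{The fix.} Detect blow-up in space-time before the blow-up time, where continuity is available, rather than at the fixed time $t$. This is the paper's argument.
\begin{itemize}
\item Since $D_\varphi(\omega)$ is open, $x\mapsto\tau^\star(\omega,x)$ is lower semicontinuous. On $\{\tau^\star(\cdot,C)\le t\}$ it therefore attains its minimum at some $x_0\in C$, with $\tau^\star(\omega,x_0)\le t$.
\item Continuity of the trajectory of $x_0$ into $\mathbb R^d_\Delta$ gives, for every $N$, some $s<\tau^\star(\omega,x_0)$ with $|\varphi(s,\omega,x_0)|>N$.
\item The set $\{(s,x)\in D_\varphi(\omega):|\varphi(s,\omega,x)|>N\}$ is open, so you can move to a rational time in $[0,t]$ and a point of $Q$.
\item Conversely, on $\{\tau^\star(\cdot,C)>t\}$ the map $\varphi$ is bounded on the compact set $[0,t]\times C\subseteq D_\varphi(\omega)$.
\end{itemize}
Hence
\[
\{\tau^\star(\cdot,C)\le t\}=\bigcap_{N\in\mathbb N}\bigcup_{s\in[0,t]\cap\mathbb Q}\bigcup_{q\in Q}\{\varphi(s,\omega,q)\in\mathbb R^d,\ |\varphi(s,\omega,q)|>N\}.
\]
Each event on the right lies in $\mathcal F_s\subseteq\mathcal F_t$, so the left-hand side is in $\mathcal F_t$.
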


\begin{proof}
   (i): Let $C\subset \mathbb R^d$ be compact. We want to show that $\tau^\star(\omega,C)$ is an $ (\mathcal{F}_t)$-stopping time, that is, $\{\omega \in \Omega: \tau^\star(\omega, C)\leq t\} \in \mathcal F_t$ for every $t \geq 0$. 
     Since the survival domain $D_\varphi(\omega)$ is open, $\tau^\star(\omega,x)$ is lower semicontinuous in $x$ for every $\omega \in \Omega$ and attains its minimum on the compact set $C$. Furthermore, by the time-continuity of trajectories, we have that if $\tau^\star(\omega,C)\leq t$, then the set 
     \begin{equation}\label{eq:imageSet}
         \{\varphi(s,\omega,x) : (s,x) \in ([0,t]\times C) \cap D_\varphi(\omega)\}
     \end{equation}
     is unbounded. Conversely, if $\tau^\star(\omega,C) >t$, then the compact set $[0,t]\times C$ is in the survival domain $D_\varphi$ so, by restricted continuity, the set \eqref{eq:imageSet} is bounded. Hence, we can write
     \begin{align*}
         \{\omega \in \Omega: \tau^\star(\omega, C)\leq t\} &= \bigcap_{n \in \mathbb N} \bigcup_{s \in [0,t]} \bigcup_{x \in C} \{\omega \in \Omega: (s,x) \in D_\varphi(\omega) \text{ and } |\varphi(s,\omega,x)| > n\}\\
         &= \bigcap_{n \in \mathbb N} \bigcup_{s \in [0,t]\cap \mathbb Q} \bigcup_{x \in C^*} \{\omega \in \Omega: (s,x) \in D_\varphi(\omega) \text{ and } |\varphi(s,\omega,x)| > n\},
     \end{align*}
     where $C^*\subseteq C$ is a countable dense subset and we use restricted continuity again. Since every event in the countable intersection of countable unions is in $\mathcal F_t$ by definition, we can conclude $\{\omega \in \Omega: \tau^\star(\omega, C)\leq t\} \in \mathcal F_t$ completing the proof.

    (ii) follows directly from (i), since countable infimums of $(\mathcal F_t)$-stopping times are $(\mathcal F_t^+)$-stopping times.

    (iii) Let $\tau$ be an $(\mathcal F^+_t)$-stopping time, $t\geq 0$ and $E \in \mathcal F_t^+$. Define the stochastic process $(E_s)_{s\geq 0}$ by 
    $$E_s(\omega) \coloneqq \mathds 1_E(\theta_s\omega).$$
    By Definition \ref{def:filtered} (iv), it is progressively measurable with respect to $(\mathcal F^+_{t+s})_{s\geq 0}$, that is, for each $u \geq 0$, the restriction $(E_s)_{s\in [0,u]}$ is $(\mathcal B([0,u])\otimes \mathcal F^+_{t+u})$-measurable. Hence, $E_\tau = \mathds 1_{\theta_\tau^{-1}(E)}$ is $\mathcal F^+_{t+\tau}$-measurable and $\theta_\tau^{-1}(E) \in \mathcal F^+_{t+\tau}$, establishing \eqref{eq:TauShift}. To show \eqref{eq:tauSum}, let $\tau_1$ and $\tau_2$ be $(\mathcal F_t^+)$-stopping times and $\tau_3$ be defined as in \eqref{eq:tauSum}. For each $t \geq 0$, we have
    \begin{align*}
        \{\omega \in \Omega : \tau_3(\omega) <t\} &= \bigcup_{s\in [0,t] \cap \mathbb Q} \{\omega \in \Omega : \tau_1(\omega) < s\} \cap \{\omega \in \Omega : \tau_2(\theta_{\tau_1(\omega)}\omega) < t-s\}\\
        &= \bigcup_{s\in [0,t] \cap \mathbb Q} \{\omega \in \Omega : \tau_1(\omega) < s\} \cap \theta_{\tau_1}^{-1}(\{\omega \in \Omega : \tau_2(\omega) < t-s\}).
    \end{align*}
    By \eqref{eq:TauShift}, the event $\theta_{\tau_1}^{-1}(\{\omega \in \Omega : \tau_2(\omega) < t-s\})$ is in $\mathcal F^+_{t-s+\tau_1}$ and hence 
    $$ \{\omega \in \Omega : \tau_1(\omega) < s\} \cap \theta_{\tau_1}^{-1}(\{\omega \in \Omega : \tau_2(\omega) < t-s\}) \in \mathcal F_{t-s+s}^+=\mathcal F_{t}^+,$$
    for every $s \in [0,t].$ In particular, $\{\omega \in \Omega : \tau_3(\omega) <t\} \in \mathcal F_{t}^+$ and, by right-continuity of the filtration, we also get $\{\omega \in \Omega : \tau_3(\omega) \leq t\} \in \mathcal F_{t}^+$. Since this holds for all $t\geq 0$, the random time $\tau_3$ is a stopping time as claimed.
\end{proof}

\begin{definition}[Weak/Strong Completeness]\label{def:compl}~
    \begin{enumerate}
        \item [(i)] A local RDS $\varphi$ is called \emph{weakly complete}, if we have $\mathbb P(\tau^\star(\omega, x) = \infty) = 1$, for every $x \in \mathbb R^d$.
        \item[(ii)]The local RDS $\varphi$ is called \emph{strongly complete}, if  ${\mathbb P(\tau^\star(\omega, \mathbb R^d) = \infty) = 1}$.
    \end{enumerate}
\end{definition}

Weak completeness corresponds to global  well-posedness of the one-point process induced by $\varphi$, while strong completeness corresponds to global  well-posedness of the flow.
It can be readily seen that strong completeness implies weak completeness. The converse, however, is not true, see \cite{Elworthy78,LiScheutzow11}.

In the theory of random dynamical systems it is often crucial to assume independence of past and future.
The most common assumption is to assume that the RDS is a \emph{Markov RDS}, meaning that the $\sigma$-algebras
$$\mathcal F_{-\infty,0} \coloneqq \sigma\Big( \bigcup_{t\geq 0} \mathcal F_{-t,0}\Big)\quad \text{and} \quad \mathcal F_{0,\infty} \coloneqq \sigma \Big(\bigcup_{t\geq 0} \mathcal F_{0,t}\Big)$$
are independent. However, this notion is too weak for our purposes and we require the following stronger version below.

\begin{definition}[Strong Markov Property]
    A filtered local RDS $(\theta,\varphi)$ has the \emph{strong Markov property} if its filtration satisfies the following. For every finite $(\mathcal F_t^+)$-stopping time $\tau$ and every bounded random variable $Y$ that is measurable with respect to $\mathcal F_{0, \infty}$, we have
    \begin{equation}\label{eq:StrongMarkov}
        \mathbb E[Y(\theta_{\tau(\omega)}\omega)\mid \mathcal F_\tau^+] = \mathbb E[Y], \quad \text{almost surely}.
    \end{equation}

\end{definition}

\subsection{Local RDS generated by SDEs with Additive Noise}\label{sec:RDS}
We consider the SDE
\begin{equation}\label{eq:SDEwithAddNoise}
    \dd X(t) = F(X(t)) \,\dd t + \dd W(t),
\end{equation}
 where $F: \mathbb R^d \to \mathbb R^d$ is a locally Lipschitz continuous vector field and $(W(t))_{t \in \mathbb R}$ is a two-sided $d$-dimensional Brownian motion. 
The appropriate probability space for equation~\eqref{eq:SDEwithAddNoise} is $\Omega=C_0(\mathbb R,\mathbb R^d)$, that is, the space of all continuous functions $\omega:\mathbb R\rightarrow \mathbb R^d$ satisfying $\omega(0)=0$.
We endow $\Omega$ with the compact-open topology and denote by $\mathcal F=\mathcal B(\Omega)$ the Borel $\sigma$-algebra on $\Omega$.
It is well known that there exists the \emph{Wiener probability measure} $\mathbb P$ on $(\Omega,\mathcal F)$ for which the process $(W(t,\omega))_{t \in \mathbb R}$ defined by $W(t,\omega) := \omega(t)$ is a $d$-dimensional Brownian motion.  For each $t\in\mathbb R$, we define the shift map $\theta_t:\Omega\rightarrow \Omega$ by
\begin{equation*}
(\theta_t \omega)(s) = \omega(s+t) - \omega(t) \fa s \in \R\,.
\end{equation*}
It is well known that $(\Omega,\mathcal F,\mathbb P,(\theta_t)_{t\in \mathbb R})$ forms a metric dynamical system, see \cite{a98}. 
We define the sub $\sigma$-algebra $\mathcal{F}_{s,t}$ as the $\sigma$-algebra generated by $\omega(u)- \omega(v)$ for $s \leq v \leq u \leq t$.
It is well known that $(\Omega,\mathcal F,\mathbb P,(\theta_t)_{t\in \mathbb R}, (\mathcal F_{s, t})_{s\leq t \in \mathbb R}))$ forms a filtered metric dynamical system, see \cite{a98}. In particular, the progressive measurability of the shift 
\eqref{eq:prog} is a direct consequence of the joint continuity of the shift with respect to the compact-open topology. Furthermore, the forward filtration $(\mathcal F_t)$ is exactly the canonical filtration of the Brownian motion, and it is a standard fact that it satisfies the strong Markov property, see, for example, \cite[Theorem 2.6.16]{KaratzasShreve1991}. 
\begin{remark}\label{ref:Completion}
    While $\theta$ is jointly measurable with respect to the Borel $\sigma$-algebra $\mathcal B(\Omega)$, it is not jointly measurable if $\mathcal F$ is replaced by its $\mathbb P$-completion $\bar {\mathcal F}$. This can be seen as follows. Let $B\subset \mathbb R$ be some set that is not Lebesgue measurable and set $f(t) \coloneqq \arctan(t) \,e_1$, where $e_1 \in \mathbb R^d$ is the first unit vector. Clearly $f \in C_0(\mathbb R, \mathbb R^d)$. Set $A \coloneqq \{\theta_t f: t\in B\}$. Since $A$ contains only bounded functions, $A \in \bar {\mathcal F}$. However, $\{(t,g) \in \mathbb R\times C_0(\mathbb R, \mathbb R^d): \theta_t g \in A\}$ is not in $\mathcal B(\mathbb R) \otimes \bar{\mathcal F}$, since its section at $g = f$ is equal to $B$. This issue persists if $\mathcal B(\mathbb R)$ is replaced by the $\sigma$-algebra of Lebesgue measurable sets.
\end{remark}

The construction of a stochastic flow for \eqref{eq:SDEwithAddNoise} can be achieved without the use of stochastic calculus. For simplicity, we only consider trajectories forward in time. First, we temporarily fix some $\omega \in \Omega$ and consider the non-autonomous ODE
\begin{equation}\label{eq:RDE}
    \frac{\dd}{\dd t}\tilde X(t) = F\big(\tilde X(t) + W(t,\omega)\big).
\end{equation}
Since $F$ is locally Lipschitz, this ODE admits a unique local solution map, which we denote by $\tilde \varphi_\omega: [0,\infty)\times \mathbb R^d \supseteq D_{\tilde \varphi_\omega} \to \mathbb R^d$, where $D_{\tilde \varphi_\omega}$ is the maximal domain. By setting $\tilde \varphi_\omega(t,x) \coloneqq \Delta$ for each $(t,x) \in [0,\infty) \times \mathbb R^d_\Delta \setminus D_{\tilde \varphi_\omega}$, we can extend $\tilde \varphi_\omega(t,x)$ to a flow $$\tilde \varphi_\omega: [0,\infty) \times \mathbb R^d_\Delta \to \mathbb R^d_\Delta.$$
It is a standard fact from the theory of ordinary differential equations that 
$$\{(t, \omega,x) \in [0,\infty) \times \Omega \times \mathbb R^d: (t,x) \in D_{\tilde \varphi_\omega}\}$$
is open and $\tilde \varphi_\omega(t,x)$ is jointly continuous in $t$, $\omega$ and $x$, when $\Omega$ is equipped with the compact-open topology.  Furthermore, trajectories $t \mapsto \tilde \varphi_\omega(t, x)$ are continuous in $\mathbb R^d_\Delta$ for every $\omega \in \Omega$ and every $x \in \mathbb R^d$.
We define the map $\varphi: [0,\infty) \times \Omega \times \mathbb R^d_\Delta \to \mathbb R^d_\Delta$ by
$$\varphi(t, \omega, x) = \tilde \varphi_\omega(t,x) + W(t, \omega),$$
where we use the convention $\Delta + w \coloneqq \Delta$ for every $w\in \mathbb R^d$. A simple calculation shows that $X(t) = \varphi(t, \omega, x)$ is the maximal solution to the SDE \eqref{eq:SDEwithAddNoise} with $X(0) = x$. Hence, it can be checked that $\varphi$ is a local RDS in the sense of  Definition \ref{def:LocalRDS}.

\subsection{Random Attractor}
Recall that $E(\omega)$ is a random closed set, if $E(\omega)$ is closed for every $\omega \in \Omega$ and for every open set $U$, the event $\{\omega \in \Omega : E(\omega) \cap U \neq \emptyset\} \in \mathcal F$ is measurable.
A random closed set $C(\omega)$ is called \emph{random compact set} if it is compact for every $\omega$. Furthermore, it is called measurable with respect to the past if it is measurable with respect to $\mathcal F_{-\infty,0}$.
A random compact set $C(\omega)$ is called $\varphi$\emph{-invariant}, if $\tau^\star(\omega, C(\omega)) = \infty$ almost surely and for every $t \geq 0$ we have
$$\mathbb P(\varphi(t, \omega, C(\omega)) = C(\theta_t(\omega))) = 1.$$
A random variable $R:\Omega\rightarrow (0,\infty)$ is called \emph{tempered} if
\[
\lim_{t \to \pm \infty} \frac{1}{|t|} \log^{+} R(\theta_t\omega)=0, \faa \omega\in \Omega\,,
\]
(see,~e.g.,~\cite[Definition 4.1.1]{a98}). A random compact set $(C(\omega))_{\omega \in \Omega}$ is called \emph{tempered} if there exists a tempered random variable $R$ such that
\[
  C(\omega)\subseteq B_0(R(\omega)), \faa \omega\in\Omega,
\]
where $B_x(R) \subset \R^d$ denotes the open ball with center $x$ and radius $R$.
\begin{definition}[Weak/Pull-back Set Attractor]\label{def:Attr}
   A non-empty random compact $\varphi$-invariant set $(A(\omega))_{\omega \in \Omega}$ which is measurable with respect to the past is called 
	\begin{enumerate}
		\item[(i)] \emph{weak set attractor}, if for every deterministic bounded set $E \subset \mathbb R^d$, we have
            $$\operatorname{dist} \big(\varphi(t, \omega, E), A(\theta_t \omega)\big) \to 0, ~\text{in probability},$$
            where $\operatorname{dist}(E_1, E_2):= \sup_{x\in E_1}\inf_{y\in E_2}|x-y|$ is the Hausdorff semi-distance.
		\item[(ii)] \emph{pull-back set attractor}, if for every tempered random compact set $(E(\omega))_{\omega \in \Omega}$, we have
		\begin{equation*}
		\operatorname{dist} \big(\varphi(t, \theta_{-t} \omega, E(\theta_{-t}\omega)), A(\omega)\big) \to 0 , \faa \omega \in \Omega.
		\end{equation*}
	\end{enumerate}
\end{definition}
 We only consider the notions of weak/pull-back set attractors for random dynamical systems which are strongly complete in order to ensure that the Hausdorff semi-distance is well-defined. 
\begin{remark}\label{rmk:WeakSetAttr}
 For pull-back set attractors, it is useful to consider attraction by all tempered compact random sets, since this leads to a notion of attractor which is invariant under certain random coordinate transformations, such as the ones considered in Section \ref{sec:ProofAttrI}. For weak set attractors we only require attraction of deterministic sets for simplicity, even though our definition also implies that if $A(\omega)$ is a weak set attractor and $E(\omega)$ is a tempered set, then 
$$\operatorname{dist} \big(\varphi(t, \omega, E(\omega)), A(\theta_t \omega)\big) \to 0, ~\text{in probability}.$$
This can be seen as follows. We need to show that for every $\varepsilon, \delta > 0$, there exists a $T>0$, such that for all $t \geq T$
     \begin{equation}\label{eq:WeakSetAttr}
         \mathbb P \big(\operatorname{dist}\big(\varphi(t, \omega, E(\omega)), A(\theta_t \omega)\big)>\varepsilon \big) < \delta.
     \end{equation}
     Choose $R>0$, such that $\mathbb P(E(\omega) \subseteq \bar B_0(R)) > 1-\delta/2$, where $\bar B_0(R)$ is the closure of $B_0(R)$. Now, use the fact that there exists a $T\geq 0$ such that \eqref{eq:WeakSetAttr} holds for all $t \geq T$ if $E(\omega)$ is replaced with the deterministic set $\tilde E = \bar B_0(R)$ and $\delta$ is replaced with $\delta/2$.
 \end{remark}
Clearly, pull-back set attractors are also weak set attractors. The converse however is not true \cite{Scheutzow02}. Both weak and pull-back set attractors are essentially unique if they exist \cite[Lemma 1.3]{FGS1}, that is, if $A$ and $\tilde A$ are both set attractors, they must agree almost surely. 

In the remainder of this subsection, we present two criteria for the existence of a set attractor, which we use to prove Theorems \ref{theo:attractors1} and \ref{theo:attractors2}, respectively.
For this we assume that the RDS $\varphi$ is strongly complete. A tempered random compact set $B(\omega)$ is called an \emph{absorbing set}, if for almost all $\omega\in\Omega$ and any tempered random compact set $E(\omega)$, there exists a time $T_E(\omega)>0$ such that
\[
\varphi\big(t,\theta_{-t}\omega,E(\theta_{-t}\omega)\big)\subset B(\omega)\fa t\geq T_E(\omega).
\]
A proof of the following theorem, which we use to prove Theorem \ref{theo:attractors1} in Section \ref{sec:ProofAttrI}, can be found in \cite[Theorem~3.5]{flandolischmalfuss96}.

\begin{theorem}[Existence of a pull-back set attractor]\label{theo:ExistencePullback}
  Suppose that, for the RDS $\varphi$, there exists an absorbing set $B$. Then there exists an (essentially unique) pull-back set attractor $(A(\omega))_{\omega \in \Omega}$, given by
	\[
	A(\omega)
	:=
	\bigcap_{\tau\geq 0}\overline{\bigcup_{t\geq \tau} \varphi\big(t,\theta_{-t}\omega,B(\theta_{-t}\omega)\big)}.
	\]
\end{theorem}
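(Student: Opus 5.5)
The plan is the standard omega-limit construction, which is how \cite[Theorem~3.5]{flandolischmalfuss96} proceeds. Define $A(\omega)$ by the displayed formula and verify four things in turn: that it is a non-empty compact subset of $B(\omega)$; that it is measurable with respect to the past; that it is $\varphi$-invariant; and that it attracts every tempered random compact set in the pull-back sense.

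\textbf{Compactness and non-emptiness.} Fix $\omega$ in the full-measure set on which absorption holds. Apply the absorbing property to $E=B$ itself, which is allowed since $B$ is tempered and compact. This gives $T_B(\omega)$ with $\varphi(t,\theta_{-t}\omega,B(\theta_{-t}\omega))\subseteq B(\omega)$ for all $t\geq T_B(\omega)$. Hence for $\tau\geq T_B$ the closed sets
\[
\overline{\bigcup_{t\geq\tau}\varphi\big(t,\theta_{-t}\omega,B(\theta_{-t}\omega)\big)}
\]
are non-empty, decreasing in $\tau$, and contained in the compact set $B(\omega)$. Their intersection $A(\omega)$ is therefore non-empty and compact. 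The same sets give the standard sequential characterization: $y\in A(\omega)$ if and only if there exist $t_n\to\infty$ and $x_n\in B(\theta_{-t_n}\omega)$ with $\varphi(t_n,\theta_{-t_n}\omega,x_n)\to y$.

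\textbf{Attraction.} This is argued by contradiction. Suppose $E$ is tempered and there are $\varepsilon>0$, $t_n\to\infty$ and $y_n\in\varphi(t_n,\theta_{-t_n}\omega,E(\theta_{-t_n}\omega))$ with $\operatorname{dist}(y_n,A(\omega))\geq\varepsilon$. Write $t_n=s+(t_n-s)$ for a fixed large $s$ and use the cocycle property:
\[
\varphi(t_n,\theta_{-t_n}\omega,\cdot)=\varphi(s,\theta_{-s}\omega,\cdot)\circ\varphi(t_n-s,\theta_{-t_n}\omega,\cdot).
\]
By absorption of $E$ at the fibre $\theta_{-s}\omega$, for $n$ large the inner image lies in $B(\theta_{-s}\omega)$. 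This places $y_n$ in the pull-back images of $B$ at times $s$, so that a diagonal argument combined with compactness of $B(\omega)$ produces a limit point in $A(\omega)$, which is the contradiction.

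The delicate point is that $T_E$ depends on the fibre. The clean route is to absorb $E$ into $B$ first and then use $B$'s own absorption:
\[
\varphi(t,\theta_{-t}\omega,E(\theta_{-t}\omega))\subseteq\varphi\big(s,\theta_{-s}\omega,B(\theta_{-s}\omega)\big)\quad\text{for } t-s\geq T_E(\theta_{-s}\omega).
\]
Here $T_E(\theta_{-s}\omega)$ is finite for each rational $s$ almost surely. This puts $y_n$ in the closure defining $A(\omega)$ along a subsequence, using compactness of $B(\omega)$ as before.

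\textbf{Invariance.} Use the cocycle property together with continuity of $\varphi(t,\omega,\cdot)$ on compacts, which is available because $\varphi$ is strongly complete. For $\varphi(t,\omega,A(\omega))\subseteq A(\theta_t\omega)$, take $y\in A(\omega)$ represented as $\lim\varphi(t_n,\theta_{-t_n}\omega,x_n)$ and push it forward by $\varphi(t,\omega,\cdot)$. For the reverse inclusion, take $y\in A(\theta_t\omega)$ with representing sequence at times $t_n$, factor
\[
\varphi(t_n,\theta_{-t_n}\theta_t\omega,\cdot)=\varphi(t,\omega,\cdot)\circ\varphi(t_n-t,\theta_{-(t_n-t)}\omega,\cdot),
\]
and extract a convergent subsequence of the inner points in $B(\omega)$. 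The limit lies in $A(\omega)$, and continuity sends it to $y$.

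\textbf{Measurability.} By Definition~\ref{def:filtered}(ii)--(iii), $\varphi(t,\theta_{-t}\omega,\cdot)$ is $\mathcal F_{-t,0}$-measurable. Restricting to rational $t$ via continuity gives measurability of $A$ with respect to $\mathcal F_{-\infty,0}$. This needs $B$ to be past-measurable or a slight modification of it, which is where one relies on \cite{flandolischmalfuss96} and \cite{cds09}.

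The main obstacle is this measurability step and the fibre-dependence of $T_E$ in the attraction argument; the topological steps are routine.
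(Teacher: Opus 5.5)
The paper does not prove this theorem; it only quotes \cite[Theorem~3.5]{flandolischmalfuss96}, and your $\omega$-limit construction is the standard argument behind that reference. Your non-emptiness, compactness, attraction and invariance steps are correct. In particular, absorbing $E$ at the countably many fibres $\theta_{-s}\omega$, $s\in\mathbb{N}$, and then using $T_B(\omega)$ to get compactness fully handles the fibre-dependence of $T_E$. That point is not an obstacle.

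The genuine gap is measurability, which is part of the conclusion: Definition~\ref{def:Attr} requires $A$ to be $\mathcal F_{-\infty,0}$-measurable. There are two problems.

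\textbf{Rational times.} Restricting to rational $t$ cannot be justified ``via continuity'', because $t\mapsto B(\theta_{-t}\omega)$ is merely measurable, not continuous. The reduction to countably many times must come from absorption, exactly as in your attraction step. On the full-measure set where $T_B(\theta_{-k}\omega)<\infty$ for all $k\in\mathbb{N}$, every pull-back image of $B$ at a time $t\geq k+T_B(\theta_{-k}\omega)$ lies in the compact set $\varphi(k,\theta_{-k}\omega,B(\theta_{-k}\omega))$. Hence
\[
A(\omega)=\bigcap_{k\in\mathbb{N}}\varphi\big(k,\theta_{-k}\omega,B(\theta_{-k}\omega)\big)\quad\text{almost surely.}
\]
Each set on the right is a random compact set: push a Castaing representation of $B(\theta_{-k}\cdot)$ through the continuous map $\varphi(k,\theta_{-k}\omega,\cdot)$. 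This gives $\mathcal F$-measurability of this version of $A$. Since $\mathcal F$ is deliberately not completed in this paper, this version is the one you should take as the attractor. It also gives $\mathcal F_{-\infty,0}$-measurability, but only if $B$ itself is past-measurable. That holds in the application of Section~\ref{sec:ProofAttrI}, where $B=\bar B_0(\hat R)$ and $\hat R$ depends only on $O^*(\theta_s\omega)$ for $s\leq 0$.

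\textbf{General absorbing set.} For a general absorbing $B$ you need an extra idea, which you leave to the references: compare $A$ with $\omega$-limit sets of the deterministic balls $K_j=\bar B_0(j)$. Set
\[
\Omega_j(\omega):=\bigcap_{m}\overline{\bigcup_{n\geq m}\varphi(n,\theta_{-n}\omega,K_j)} .
\]
\begin{itemize}
\item Since $K_j$ is tempered, $A$ attracts it, so $\Omega_j\subseteq A$ almost surely.
\item Conversely, apply invariance and Poincar\'e recurrence of $\theta_{-1}$ on $G_j=\{A\subseteq K_j\}$. For a.e.\ $\omega\in G_j$ you get $A(\omega)=\varphi(n,\theta_{-n}\omega,A(\theta_{-n}\omega))\subseteq\varphi(n,\theta_{-n}\omega,K_j)$ for infinitely many $n$. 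Hence $A(\omega)=\Omega_j(\omega)$ there.
\item Since $\mathbb{P}(G_j)\uparrow 1$ and each $\Omega_j$ is $\mathcal F_{-\infty,0}$-measurable, $A$ coincides almost surely with a past-measurable random compact set.
\end{itemize}
The same comparison also gives the essential uniqueness asserted in the statement, which your proposal does not address.
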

For the proof of Theorem \ref{theo:attractors2} presented in Section \ref{sec:Attr2}, we use the following criterion for the existence of a weak set attractor which can be found in \cite[Theorem 4.2]{cds09} (note that \cite{cds09} has received a correction \cite{crauelgeissscheutzow2023}, which, however, does not affect the cited theorem).
\begin{theorem}[Existence of a weak set attractor]\label{theo:existenceWeakSetAtt}
    Suppose that for every $\varepsilon$, there exists a compact set $C\subset \mathbb R^d$, such that for every bounded set $B\subset \mathbb R^d$, there exists a time $T>0$, such that for all $t>T$ we have
    $$\mathbb P(\varphi(t,\omega, B)\subseteq C)\geq 1-\varepsilon.$$
    Then there exists an (essentially unique) weak set attractor.
\end{theorem}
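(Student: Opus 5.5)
The plan is to build the attractor as a pull-back omega-limit set and to convert every statement "in probability" into a pathwise statement holding on events of large probability. The basic observation is that, since $\mathbb P$ is $\theta_t$-invariant, for every bounded $B$ and every $t$ we have
$$\mathbb P\big(\varphi(t,\theta_{-t}\omega,B)\subseteq C\big)=\mathbb P\big(\varphi(t,\omega,B)\subseteq C\big).$$
Hence the hypothesis also holds in pull-back form. For a bounded set $B$ let
$$\Omega_B(\omega)\coloneqq\bigcap_{s\geq 0}\overline{\bigcup_{t\geq s}\varphi(t,\theta_{-t}\omega,B)}.$$
Fix $B_n\coloneqq\bar B_0(n)$, and for $\varepsilon=1/m$ let $C_m$ be the compact set provided by the hypothesis.

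\textbf{Step 1: the omega-limit sets are trapped in $C_m$ with large probability.} Fix $m$ and $n$ and choose times $t_k\to\infty$. By Fatou's lemma (reverse Fatou for indicators), the event $G_{m,n}$ on which $\varphi(t_k,\theta_{-t_k}\omega,B_n)\subseteq C_m$ for infinitely many $k$ satisfies $\mathbb P(G_{m,n})\geq 1-1/m$. On $G_{m,n}$ the following hold.
\begin{itemize}
\item $\Omega_{B_n}(\omega)$ is non-empty, since along the good times the image sets lie in the compact set $C_m$ and so have an accumulation point.
\item $\Omega_{B_n}(\omega)\subseteq C_m$. To get this inclusion I expect I must use all large times rather than one sequence. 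The idea is to use the cocycle property to write $\varphi(t,\theta_{-t}\omega,B_n)=\varphi(t-s,\theta_{-(t-s)}\omega,\varphi(s,\theta_{-t}\omega,B_n))$, and to enlarge $B_n$ to a ball containing the inner image with high probability.
\end{itemize}
Since $n\mapsto\Omega_{B_n}$ is increasing, the events $\{\Omega_{B_n}\subseteq C_m\}$ are decreasing in $n$. Continuity from above then gives $\mathbb P(\Omega_{B_n}\subseteq C_m\ \forall n)\geq 1-1/m$.

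\textbf{Step 2: definition of $A$ and its properties.} Define
$$A(\omega)\coloneqq\overline{\bigcup_n\Omega_{B_n}(\omega)}.$$
By Step 1, $A(\omega)\subseteq C_m$ with probability at least $1-1/m$. Hence $A$ is a.s. non-empty and compact; on the remaining null set I redefine $A$ to be a fixed point. The three structural properties follow as below.
\begin{itemize}
\item \emph{Measurability with respect to the past.} Each map $\varphi(t,\theta_{-t}\omega,\cdot)$ is $\mathcal F_{-t,0}$-measurable, and the unions and intersections defining $A$ can be reduced to countably many times by continuity in $t$.
\item \emph{Invariance.} The identity $\varphi(t,\omega,\Omega_{B}(\omega))=\Omega_{B}(\theta_t\omega)$ is the standard omega-limit argument. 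It uses the cocycle property, joint continuity of $\varphi$ (strong completeness is assumed), and the fact, from Step 1, that on the good event the relevant pulled-back images eventually lie in a compact set.
\end{itemize}

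\textbf{Step 3: weak attraction, argued by contradiction.} Suppose attraction fails for some bounded $B\subseteq B_n$. Then there are $\eta,\delta>0$ and $t_k\to\infty$ with
$$\mathbb P\big(\operatorname{dist}(\varphi(t_k,\omega,B),A(\theta_{t_k}\omega))>\eta\big)\geq\delta.$$
By invariance of $\mathbb P$ under $\theta_{t_k}$, the same bound holds for $\operatorname{dist}(\varphi(t_k,\theta_{-t_k}\omega,B),A(\omega))$. Take $m$ with $1/m<\delta/2$ and apply Fatou again. With probability at least $\delta/2$ there are infinitely many $k$ for which both of the following hold:
\begin{itemize}
\item $\varphi(t_k,\theta_{-t_k}\omega,B)\subseteq C_m$;
\item some point $x_k$ of this image lies at distance greater than $\eta$ from $A(\omega)$.
\end{itemize}
A subsequence of the $x_k$ converges in $C_m$ to a limit $x$. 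By definition $x\in\Omega_B(\omega)\subseteq A(\omega)$, yet $\operatorname{dist}(x,A(\omega))\geq\eta$, which is a contradiction.

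\textbf{Main obstacle.} I expect the hardest step to be the second bullet of Step 1, the inclusion $\Omega_{B_n}\subseteq C_m$ (or some compact set) with large probability. The hypothesis only controls one time at a time, whereas omega-limit sets involve all large times simultaneously. If this cannot be obtained directly, my first fallback is to define $A$ via limit points along the good times only. Concretely, I would set $A(\omega)\coloneqq\bigcup_m\bigcup_n\big(\Omega_{B_n}(\omega)\cap C_m\big)$ and check that this still works in Step 3, where only accumulation points inside $C_m$ are used. Invariance would then be the delicate part.
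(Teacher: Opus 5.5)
The paper does not prove this theorem; it quotes it from \cite[Theorem 4.2]{cds09}. So your argument has to stand on its own, and it has a genuine gap.

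\textbf{The gap.} The second bullet of Step 1 is not merely the hardest step: it is false in general. With it, the compactness of your $A$ in Step 2 fails. So does your invariance argument, which needs the pulled-back images to lie in a compact set for \emph{all} large times, whereas $G_{m,n}$ only provides infinitely many good times.

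Your cocycle trick cannot repair this. Replacing $B_n$ by a larger ball that contains $\varphi(s,\theta_{-t}\omega)B_n$ with high probability only changes which fixed-time events you use. By contrast, $\Omega_{B_n}(\omega)$ requires control for all $t\ge s$ simultaneously, and the hypothesis never provides that.

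\textbf{A counterexample.} Let $Z$ be a real stationary process with continuous paths that is not tempered, say $\limsup_{t\to\infty}(-Z(\theta_{-t}\omega))/t=\infty$ almost surely (such processes exist, e.g.\ with heavy-tailed marginals). Put $\varphi(t,\omega)x=x\,e^{-t+Z(\theta_t\omega)-Z(\omega)}$ on $\mathbb R$.
\begin{itemize}
\item This is a continuous cocycle. Since $Z(\theta_t\omega)-Z(\omega)$ is tight, the hypothesis holds with $C=[-1,1]$ for every $\varepsilon$, and $\{0\}$ is the weak attractor.
\item However, $\varphi(t,\theta_{-t}\omega)[-n,n]=[-np(t),np(t)]$ with $p(t)=e^{-t+Z(\omega)-Z(\theta_{-t}\omega)}$, and $\limsup_{t\to\infty}p(t)=\infty$.
\item Hence $\Omega_{B_n}(\omega)=\mathbb R$ almost surely, and your $A$ is the whole line.
\end{itemize}
So a weak attractor need not contain the pull-back $\omega$-limit sets of deterministic balls. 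Any construction with $A\supseteq\bigcup_n\Omega_{B_n}$, which is exactly what Step 3 uses, cannot work. Your fallback fails as well: in this example $\bigcup_m\bigcup_n(\Omega_{B_n}\cap C_m)=\bigcup_m C_m$, which depends on the arbitrary choice of the $C_m$ and is not invariant.

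\textbf{What is needed.} Use the hypothesis only along countably many deterministic times, where Borel--Cantelli turns ``probability at least $1-2^{-k}$'' into ``almost surely eventually''. One way to do this:
\begin{itemize}
\item Take $C_k$ for $\varepsilon=2^{-k}$, choose $T_k$ with $\mathbb P(\varphi(T_k,\omega)C_{k+1}\not\subseteq C_k)\le 2^{-k}$, and set $s_k=T_1+\dots+T_k$.
\item By the cocycle property and $\theta$-invariance, $D_k(\omega)=\varphi(s_{k-1},\theta_{-s_{k-1}}\omega)C_k$ is almost surely eventually decreasing in $k$.
\item Hence $K=\overline{\bigcup_k D_k}$ is an almost surely compact random set, measurable with respect to the past, with $\mathbb P(\varphi(t,\theta_{-t}\omega)B\subseteq K(\omega))\to 1$ for every bounded $B$.
\item Since $K$ absorbs large balls in this sense, along a sparse deterministic sequence $t_j$ the sets $F_{t_j}(\omega)=\varphi(t_j,\theta_{-t_j}\omega)K(\theta_{-t_j}\omega)$ are almost surely eventually decreasing.
\item Their limit $A$ is non-empty, compact, measurable with respect to the past, and attracts bounded sets in probability.
\item Invariance follows from the identity $\varphi(s,\omega)F_{t_j}(\omega)=F_{t_j+s}(\theta_s\omega)$, together with the sandwich $F_{t_{j+1}}\subseteq F_{t_j+s}\subseteq F_{t_{j-1}}$, which holds with probability tending to one.
\end{itemize}
The attractor is thus an $\omega$-limit along a sparse deterministic time sequence, not the full pull-back $\omega$-limit set.
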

\section{Existence of a Pull-Back Attractor (Proof of Theorem \ref{theo:attractors1})}\label{sec:ProofAttrI}

Theorem \ref{theo:attractors1} (a) is proved in \cite[Theorem E (i)]{li2025strong}. 
It remains to show Theorem \ref{theo:attractors1} (b). For this, we use a transformation into a random differential equation in order to use sample path estimates of solutions (see e.g.~\cite{imkellerlederer02,IS01}). Similarly to \cite{DoanEngeletal2018}, we make use of Theorem \ref{theo:ExistencePullback} for the existence of a random pull-back set attractor.

Recall that our SDE \eqref{SDE2} has the form \eqref{eq:SDEwithAddNoise} with
$$F(x) = -x+\rho(|x|) \begin{pmatrix}0&1\\-1&0 \end{pmatrix}\,x.$$
Consider the two-dimensional Ornstein-Uhlenbeck process
\begin{equation}\label{OU}
  \dd O(t) = -  \alpha O(t) \dd t + \dd W(t)\,,
\end{equation}
for some $\alpha > 0$, and define the random variable $O^*:=\int_{-\infty}^0 e^{ \alpha s} \dd W(s)$. 
Then $t\mapsto O^*(\theta_t\omega)$ solves \eqref{OU}, that is,
\begin{equation}\label{OU_Eq1}
  O^*(\theta_t\omega)
  =
  O^*(\omega)
  - \alpha \int_0^t O^*(\theta_s\omega)\,\dd s + \omega(t)\,.
\end{equation}
By replacing $\Omega$ with a measurable subset $\widehat\Omega\subset \Omega$ of full probability that is invariant under $\theta$, there exist two random variables $L$ and $J$ such that (see~\cite{ks98})
\begin{equation}\label{eq:Bound_OU}
	|O^*(\theta_t\omega)|^2
	\leq
	L(\omega)+ J(\omega)\ln(1+|t|) \text{ for all } t\in\mathbb R \text{ and } \omega \in\widehat\Omega\,,
	\end{equation}
 implying, in particular, that $|O^*(\theta_t\omega)|$ is tempered. 
We define the map $H:\widehat\Omega\times \mathbb R^2\rightarrow \mathbb R^2$ by $H(\omega,x):=x +  O^*(\omega)$.
Under the change of variable $x\mapsto H(\omega,x)$, the SDE~\eqref{SDE2} is transformed into the random differential equation
\begin{equation}\label{RandomDifferentialEquation}
   \dot X(t) =G(\theta_t\omega,X(t))\,,
\end{equation}
where $G(\omega,x):=F\left(H(\omega,x)\right)+ \alpha \,O^*(\omega)$. Depending on the choice of $\rho$, we show below that the solution
$\Psi(t,\omega,x)$ of this random differential equation
\begin{equation}
\label{eq:Psi}
\Psi(t,\omega,x)
=x+ \int_0^t G(\theta_s\omega, \Psi(s,\omega,x))\,\dd s\,,    
\end{equation}
exists for all $t\ge0$ and, hence, forms a global, strongly complete random dynamical system. 

\begin{lemma}\label{TransformSystems}
Assume that the solution $\Psi$ to~\eqref{eq:Psi} exists for all $t\geq0$. Then:
\begin{itemize}
  \item [(i)]
  The random dynamical system $\varphi: [0,\infty)\times \widehat\Omega\times \mathbb R^2\rightarrow \mathbb R^2$, defined by
  \begin{equation}\label{Transformation_Evolution}
   \varphi(t,\omega,x):= H(\theta_t\omega,\Psi(t,\omega,H(\omega)^{-1}x))\,,
  \end{equation}
  is generated by the stochastic differential equation \eqref{SDE2}.
  \item [(ii)]
  If the random dynamical system $\Psi$ has a pull-back set attractor, then also the random dynamical system $\varphi$ has a pull-back set attractor.
  \end{itemize}
\end{lemma}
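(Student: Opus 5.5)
For (i), I will check directly that $X(t):=\varphi(t,\omega,x)=\Psi(t,\omega,x-O^*(\omega))+O^*(\theta_t\omega)$ is a solution of the integral form of \eqref{SDE2} with $X(0)=x$. Write $Y(t)=\Psi(t,\omega,x-O^*(\omega))$. By \eqref{eq:Psi} we have
\begin{equation*}
Y(t)=x-O^*(\omega)+\int_0^t \Big(F\big(Y(s)+O^*(\theta_s\omega)\big)+\alpha\, O^*(\theta_s\omega)\Big)\,\dd s .
\end{equation*}
Adding \eqref{OU_Eq1} gives
\begin{equation*}
X(t)=x+\int_0^t F(X(s))\,\dd s+\omega(t).
\end{equation*}
In this sum the term $O^*(\omega)$ cancels, and the term $\alpha\int_0^t O^*(\theta_s\omega)\,\dd s$ cancels against $-\alpha\int_0^t O^*(\theta_s\omega)\,\dd s$ from \eqref{OU_Eq1}. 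This is exactly the pathwise form of \eqref{SDE2}, i.e.\ \eqref{eq:SDEwithAddNoise}, with $W(t,\omega)=\omega(t)$.

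Solutions of this pathwise equation are unique, since it is equivalent to the RDE \eqref{eq:RDE} with locally Lipschitz $F$. Hence $\varphi$ coincides, for every $\omega\in\widehat\Omega$, with the RDS constructed in Section~\ref{sec:RDS}, and since $\Psi$ is global this RDS is global. Two further properties need checking. The cocycle property of $\varphi$ follows from the cocycle property of $\Psi$ and the conjugacy structure $H(\theta_t\omega,\cdot)\circ\Psi(t,\omega,\cdot)\circ H(\omega,\cdot)^{-1}$. Measurability follows from the measurability of $O^*$ and of $\Psi$. I would note that $\Psi$ is indeed a cocycle because $G$ is evaluated along $\theta_s\omega$.

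For (ii), let $A_\Psi$ be the pull-back set attractor of $\Psi$ and set $A(\omega):=H(\omega,A_\Psi(\omega))=A_\Psi(\omega)+O^*(\omega)$. I will verify each required property in turn.
\begin{enumerate}
\item[(1)] Compactness and nonemptiness are clear, since $A$ is a translate of $A_\Psi$.
\item[(2)] Measurability: $A$ is a random compact set because translating by a random vector preserves the hitting-event measurability. It is measurable with respect to the past because $O^*$ is $\mathcal F_{-\infty,0}$-measurable (it is an integral over $(-\infty,0]$) and $A_\Psi$ is past-measurable.
\item[(3)] Invariance: by \eqref{Transformation_Evolution},
\begin{equation*}
\varphi(t,\omega,A(\omega))=H\big(\theta_t\omega,\Psi(t,\omega,A_\Psi(\omega))\big)=H\big(\theta_t\omega,A_\Psi(\theta_t\omega)\big)=A(\theta_t\omega).
\end{equation*}
\item[(4)] Attraction: let $E$ be a tempered random compact set. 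Then $\tilde E(\omega):=E(\omega)-O^*(\omega)$ is tempered, because $|O^*(\theta_t\omega)|$ is tempered by \eqref{eq:Bound_OU} and a sum of tempered variables is tempered. Moreover,
\begin{equation*}
\varphi\big(t,\theta_{-t}\omega,E(\theta_{-t}\omega)\big)=\Psi\big(t,\theta_{-t}\omega,\tilde E(\theta_{-t}\omega)\big)+O^*(\omega).
\end{equation*}
Since translation by the fixed vector $O^*(\omega)$ is an isometry, it preserves the Hausdorff semi-distance. Therefore $\operatorname{dist}\big(\varphi(t,\theta_{-t}\omega,E(\theta_{-t}\omega)),A(\omega)\big)$ equals the corresponding $\Psi$-distance to $A_\Psi(\omega)$, which tends to $0$.
\end{enumerate}

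The main subtlety is the attraction step in (4). Attraction must hold for all tempered sets, and this relies on $\tilde E$ again being tempered, i.e.\ on temperedness of $O^*$. Everything else is routine bookkeeping.
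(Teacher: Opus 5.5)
Your proposal is correct and follows the paper's route: part (i) is the same cancellation computation, using \eqref{OU_Eq1} to recover $X(t)=x+\int_0^t F(X(s))\,\dd s+\omega(t)$. Part (ii) spells out, via $A(\omega)=A_\Psi(\omega)+O^*(\omega)$, the transfer of the pull-back attractor under the random translation $H$ that the paper compresses into one sentence citing temperedness of $|O^*(\theta_t\omega)|$, and your added checks (past-measurability, invariance, translation invariance of $\operatorname{dist}$) are exactly the details the paper leaves implicit.
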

\begin{proof}
(i)	From \eqref{Transformation_Evolution} and the definition of $H$, we have
\[
\varphi(t,\omega,x)
=
\Psi(t,\omega,x- O^*(\omega))+ O^*(\theta_t\omega)\,,
\]
which together with the fact that $\Psi$ is a solution of \eqref{RandomDifferentialEquation} implies that
\begin{align*}
\varphi(t,\omega,x)
&=
x- O^*(\omega)
+
\int_0^t G(\theta_s\omega,\Psi(s,\omega,x- O^*(\omega)))\,\dd s +  O^*(\theta_t\omega)\\
&=
x+\int_0^t G(\theta_s\omega, H(\theta_s\omega)^{-1}(\varphi(s,\omega,x)))\,\dd s+  (O^*(\theta_t\omega)-O^*(\omega))\,.
\end{align*}
Thus, using \eqref{OU_Eq1}, we obtain that
\begin{align*}
\varphi(t,\omega,x)
&=
x
+
\int_0^t G(\theta_s\omega, H(\theta_s\omega)^{-1}(\varphi(s,\omega,x)))-  \alpha O^*(\theta_s\omega)\,\dd s +  \omega(t)\\
&=
x+ \int_0^t F(\varphi(s,\omega,x))\,\dd s +  \omega(t) \,,
\end{align*}
which completes the proof of this part.

\noindent
(ii) This follows from the definition of a pull-back set attractor and the fact that the shifted term in the transformation $H(\omega, x)$, namely $|O^*(\theta_t\omega)|$, is tempered.
\end{proof}

Assuming that condition~\eqref{eq:PBAtt} holds, that is, there is a $C>0$ such that $|\rho(r)| \leq C (1+r)$, we show that the Euclidean norm of the solutions of \eqref{RandomDifferentialEquation} is bounded above by the growth of the corresponding solutions of the scalar equation of the form
\begin{equation}\label{Estimate_06}
\dot \eta = \gamma(\theta_t \omega) + \eta\, \beta(\theta_t \omega)\,,
\end{equation}
where the stationary processes $\gamma$ and $\beta$ will be determined below.
Note that for each initial value $\eta_0\in\mathbb R$, the explicit solution of \eqref{Estimate_06} is given by the variation of constants formula
\begin{equation}\label{Estimate07}
\eta(t,\omega,\eta_0)
=
e^{\int_0^t \beta (\theta_s \omega)\, \dd s} \left( \eta_0 +\int_0^t e^{ -\int_0^{s} \beta (\theta_u \omega)\, \dd u} \,\gamma(\theta_s \omega)\,\dd s\right)\,.
\end{equation}
\begin{lemma}\label{TechnicalLemma}
 Assuming  condition~\eqref{eq:PBAtt}, there exist stationary processes $\gamma$ and $\beta$
  such that for $x\in\mathbb R^2$ and $\eta_0  \geq  \left|x\right|^2$, we have, with $\eta$ given by~\eqref{Estimate07},
  \begin{equation}\label{etaest}
  \left|\Psi(t,\omega,x)\right|^2
  \leq 2 \eta(t,\omega,\eta_0 )\,,
  \end{equation}
  which implies that the solution $\Psi(t,\omega,x)$ exists for all $t\ge0$.
\end{lemma}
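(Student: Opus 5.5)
We differentiate $|\Psi|^2$ along solutions of \eqref{RandomDifferentialEquation} and compare with the linear scalar equation \eqref{Estimate_06}. Write $X=\Psi(t,\omega,x)$, $O=O^*(\theta_t\omega)$ and $J=\begin{pmatrix}0&1\\-1&0\end{pmatrix}$. Then
\begin{equation*}
\frac{\dd}{\dd t}|X|^2 = 2\langle X, -(X+O) + \rho(|X+O|)J(X+O) + \alpha O\rangle .
\end{equation*}
The key observation is that $\langle X, JX\rangle=0$, so the rotational term contributes only $2\rho(|X+O|)\langle X, JO\rangle$. Its size is at most $2|\rho(|X+O|)|\,|X|\,|O|$.

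Using \eqref{eq:PBAtt}, $|\rho(|X+O|)|\le C(1+|X|+|O|)$. The dangerous contribution is therefore bounded by
\begin{equation*}
2C|O|\,|X|^2 + 2C(1+|O|)|O|\,|X|.
\end{equation*}
This is only linear in $|X|^2$ with a random coefficient $2C|O|$, which is exactly why linear growth of $\rho$ is the threshold for this method. The remaining terms give
\begin{equation*}
-2|X|^2 + 2(\alpha-1)\langle X,O\rangle \le -2|X|^2 + |X|^2 + (\alpha-1)^2|O|^2 .
\end{equation*}
Absorbing the linear-in-$|X|$ pieces by Young's inequality, $2a|X|\le |X|^2+a^2$, we arrive at
\begin{equation*}
\frac{\dd}{\dd t}|X|^2 \le \beta(\theta_t\omega)\,|X|^2 + \gamma(\theta_t\omega).
\end{equation*}
One admissible choice is $\beta(\omega) := 2C|O^*(\omega)|$, possibly with a harmless additive constant. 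For $\gamma$ we take a polynomial in $|O^*(\omega)|$ such as $(\alpha-1)^2|O^*|^2 + C^2(1+|O^*|)^2|O^*|^2$. Both are stationary processes because they are functions of $O^*(\theta_t\omega)$.

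The comparison then follows from a standard Gronwall argument. Set $u(t)=|X(t)|^2$, so that $u'\le \beta u+\gamma$ with $u(0)=|x|^2\le\eta_0$. Multiplying by $e^{-\int_0^t\beta}$ and integrating gives $u(t)\le \eta(t,\omega,\eta_0)$ with $\eta$ as in \eqref{Estimate07}. This is stronger than \eqref{etaest}; the factor $2$ leaves room if one prefers cruder Young constants.

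Global existence follows from the continuity of $\beta$ and $\gamma$ along $t\mapsto\theta_t\omega$, which is guaranteed on $\widehat\Omega$. On any interval $[0,T]$ the right-hand side of \eqref{Estimate07} is then finite. Hence $|\Psi|$ stays bounded on the maximal existence interval intersected with $[0,T]$, which excludes blow-up.

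The only delicate point is the rotational cross term. We must make sure it produces a coefficient of $|X|^2$ that depends only on the noise and not on $X$; the orthogonality $\langle X,JX\rangle=0$ combined with the linear bound \eqref{eq:PBAtt} achieves this. We do not need $\beta$ to have negative mean here; that matters only later for absorption, which could be arranged by enlarging $\alpha$ or by using ergodicity and the bound \eqref{eq:Bound_OU}.
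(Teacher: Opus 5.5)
Your argument is correct and is essentially the paper's proof. You differentiate the squared norm along \eqref{RandomDifferentialEquation}, observe that only the cross term $\rho(|X+O^*|)\,(O^*_2X_1-O^*_1X_2)$ survives from the rotation, bound it via \eqref{eq:PBAtt} and Young's inequality, and compare with \eqref{Estimate_06}. The paper works with $\tfrac12|\Psi|^2$ instead of $|\Psi|^2$, which is where its factor $2$ comes from.

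One caveat concerns your closing remark. With your Young constants the dissipation $-2|X|^2$ is used up completely. Your $\beta=2C|O^*|$ therefore satisfies $\mathbb E[\beta]=2C\,\mathbb E|O^*|>0$ for every $\alpha$, and neither enlarging $\alpha$ nor ergodicity can then produce the $\bar\beta<0$ needed in Lemma \ref{lemm:Ktempered}. The paper avoids this by absorbing the cross terms $C|O^*|\,|X|$ and $C|O^*|^2|X|$ into the random-coefficient term $C|O^*|\,|X|^2$ rather than into the dissipation. This keeps $-\tfrac12|X|^2$ in the equation for $\tfrac12|X|^2$ and gives $\beta=-1+4C|O^*|$, whose mean is negative once $\alpha$ is large.
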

\begin{proof}
Writing $X$ as $(X_1,X_2)^\top$ and $O^*$ as $(O_1^*,O_2^*)^\top$, we rewrite \eqref{RandomDifferentialEquation} as
  \begin{align*}
 \left(
  \begin{array}{l}
  \dot X_1(t)\\
  \dot X_2(t)
  \end{array}
  \right)
  &=
 -
  \left(
  \begin{array}{l}
  X_1(t)+ O_1^*(\theta_t\omega) \\
  X_2(t)+  O_2^*(\theta_t\omega)
  \end{array}
  \right)
  + \alpha
  \left(
  \begin{array}{l}
   O_1^*(\theta_t\omega) \\
   O_2^*(\theta_t\omega)
  \end{array}
  \right)
  \\
  &\quad
  +
  \rho \left( \left|
  \left(
  \begin{array}{l}
  X_1(t)+ O_1^*(\theta_t\omega) \\
  X_2(t)+  O_2^*(\theta_t\omega)
  \end{array}
  \right)
  \right| \right)
  \left(
  \begin{array}{l}
   \phantom{-}X_2(t)+  O_2^*(\theta_t\omega)\\
 -X_1(t)-  O_1^*(\theta_t\omega)
  \end{array}
  \right)\,.
  \end{align*}
    Let $R(t)=\sqrt{X_1(t)^2+X_2(t)^2}$. 
  A direct computation yields
  \begin{align*}
   \frac{\dd}{\dd t} \left(\frac{1}{2} R(t)^2\right)
  = & X_1(t)\dot X_1(t)+ X_2(t)\dot X_2(t)\\
  = &
  -R(t)^2 + (\alpha-1)\,\big(O_1^*(\theta_t\omega) \,X_1(t) +  O_2^*(\theta_t\omega) X_2(t)\big) \\
  &+ \rho \left(\left|
  \left(
  \begin{array}{l}
  	 X_1(t)+ O_1^*(\theta_t\omega) \\
  X_2(t)+  O_2^*(\theta_t\omega)
  \end{array}
  \right)
  \right| \right) \left(   O_2^*(\theta_t\omega) X_1(t) -  O_1^*(\theta_t\omega) X_2(t)\right)\\
   \leq & - R(t)^2+(\alpha-1)\,\big(O_1^*(\theta_t\omega) \,X_1(t) +  O_2^*(\theta_t\omega) X_2(t)\big) \\
 &+ C  \big( 1+R(t)  +  |O^*(\theta_t \omega) |  \big) R(t)\, |O^*(\theta_t \omega) |  \\
   \leq & - R(t)^2 +  \left|\alpha-1\right|\, R(t)\, |O^*(\theta_t \omega) |\\
   &+  C \, |O^*(\theta_t \omega) | \left( R(t) + R(t)^2 + R(t)\, |O^*(\theta_t \omega) |  \right)  \\
  \leq & -  \frac{1}{2} R(t)^2 + \frac 12 \left|\alpha-1\right|^2  |O^*(\theta_t \omega) |^2 \\
  &+  C \, |O^*(\theta_t \omega) | \left( \frac{1}{2} + 2 R(t)^2 + \frac{1}{2}|O^*(\theta_t \omega) |^2 \right),
  \end{align*}
   using Young's inequality for products several times in the last step.
  Setting
    \begin{equation}
    \label{eq:tempered_processes}
    \begin{aligned}
        \gamma(\omega) &\coloneqq \frac{1}{2}\, C\, |O^*(\omega)| + \frac{1}{2} \left|\alpha-1\right|^2  |O^*(\omega) |^2 + \frac{1}{2}\,C\,  |O^*(\omega) |^3, \\
        \beta(\omega) &\coloneqq  -1 + 4 C\, |O^*(\omega) |,
    \end{aligned}
    \end{equation} 
    we can use a comparison argument to obtain for all $t\ge0$, $x\in\mathbb R^2\setminus\{0\}$ and $\eta_0 \geq |x|^2 $ 
    that $\frac{1}{2}|\Psi(t,\omega,x)|^2\leq \eta(t,\omega,\eta_0)$, where $t\mapsto \eta(t,\omega,\eta_0)$ is the solution~\eqref{Estimate07} to the RDE~\eqref{Estimate_06} with initial value $\eta_0$. 
    This shows the claim.
\end{proof}
Given the constant $C > 0$, we can always choose $\alpha$ in~\eqref{OU} large enough such that
$$\frac{1}{4}\, \mathbb E\left[|O^*|\right]^{-1} = \frac{1}{2} \sqrt{\frac{ \alpha}{\pi}} > C.$$
Hence, $\bar \beta \coloneqq \mathbb E [ \beta ] < 0$, for $\beta$ given in~\eqref{eq:tempered_processes}.

\begin{lemma}\label{lemm:Ktempered}
    The random variable $L(\omega)$ given by 
$$ L(\omega) \coloneqq \int_{-\infty}^0 \exp\left(\int_s^0 \beta(\theta_u \omega) \,\dd u\right) \gamma(\theta_s \omega)\,\dd s $$
is tempered.
\end{lemma}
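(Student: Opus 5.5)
The plan is to bound $L(\theta_t\omega)$ pathwise, using the logarithmic growth bound \eqref{eq:Bound_OU} for the Ornstein--Uhlenbeck process together with Birkhoff's ergodic theorem applied to $\beta$. First, substituting $\omega\mapsto\theta_t\omega$ and using the cocycle property of $\theta$, we write
$$L(\theta_t\omega)=\int_{-\infty}^t \exp\Big(\int_s^t \beta(\theta_u\omega)\,\dd u\Big)\gamma(\theta_s\omega)\,\dd s .$$
We must show that $\frac1{|t|}\log^+ L(\theta_t\omega)\to 0$ as $t\to\pm\infty$ almost surely, and that $L$ is finite almost surely; the latter is the case $t=0$ of the same estimate.

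\textbf{Step 1: ergodic control of $\beta$.} The process $\beta(\theta_u\omega)=-1+4C|O^*(\theta_u\omega)|$ is stationary, and $\mathbb E|\beta|<\infty$. The Wiener shift is ergodic, so Birkhoff's theorem (continuous time, applied in both directions) gives $\frac1{t}\int_0^t\beta(\theta_u\omega)\,\dd u\to\bar\beta<0$ as $t\to\pm\infty$. Fix $\delta\in(0,|\bar\beta|/4)$. Then there is a random constant $K_\delta(\omega)<\infty$ such that
$$\Big|\int_0^t\beta(\theta_u\omega)\,\dd u-\bar\beta t\Big|\le K_\delta(\omega)+\delta|t|\quad\text{for all } t\in\mathbb R .$$
Writing $\int_s^t=\int_0^t-\int_0^s$, we obtain for $s\le t$
$$\int_s^t\beta(\theta_u\omega)\,\dd u\le \bar\beta(t-s)+2K_\delta+\delta(|t|+|s|).$$

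\textbf{Step 2: control of $\gamma$.} By \eqref{eq:Bound_OU}, $\gamma(\theta_s\omega)$ is bounded by a polynomial in $\sqrt{L'(\omega)+J(\omega)\ln(1+|s|)}$, where $L'$ denotes the random constant in \eqref{eq:Bound_OU}, renamed to avoid a clash with the $L$ of the lemma. Hence $\gamma(\theta_s\omega)\le K'(\omega)e^{\delta|s|}$ for some finite random $K'$.

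\textbf{Step 3: combining.} Using $|s|\le|t|+(t-s)$, the integrand is bounded by
$$K'e^{2K_\delta}e^{(\bar\beta+2\delta)(t-s)}e^{3\delta|t|}.$$
Since $\bar\beta+2\delta<-|\bar\beta|/2<0$, integrating over $s\in(-\infty,t]$ gives $L(\theta_t\omega)\le \tilde K(\omega)e^{3\delta|t|}$. In particular $L<\infty$ a.s., and $\limsup_{|t|\to\infty}\frac1{|t|}\log^+L(\theta_t\omega)\le3\delta$. Letting $\delta\downarrow0$ along a countable sequence yields temperedness on a full-measure set. Positivity of $L$ is immediate, since $\gamma\ge0$ and $\gamma$ is not identically zero.

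\textbf{Main obstacle.} The delicate point is Step 1, namely obtaining a uniform two-sided bound on the ergodic averages for all real $t$, with a single random constant valid for both $s$ and $t$. Birkhoff's theorem only gives asymptotic statements. The bound for all $t$ follows by enlarging $K_\delta$ to absorb the continuous function $t\mapsto\int_0^t\beta$ on a compact interval. Measurability of $L$ and $\theta$-invariance of the exceptional set can be handled as in the construction of $\widehat\Omega$.
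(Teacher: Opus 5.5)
Your proof is correct and uses the same ingredients as the paper: the ergodic bound $\bigl|\int_0^t\beta(\theta_u\omega)\,\dd u-\bar\beta t\bigr|\le C_\varepsilon(\omega)+\varepsilon|t|$, the subexponential growth of $\gamma$ (a polynomial in $|O^*|$), the inequality $|s|\le|t|+(t-s)$, and integration against the resulting exponential decay. The only difference is organisational: the paper first factors $L\le L_1L_2$, with $L_1=\sup_{s\le0}\exp\bigl(-cs+\int_s^0\beta(\theta_u\omega)\,\dd u\bigr)$ and $L_2=\int_{-\infty}^0e^{cs}\gamma(\theta_s\omega)\,\dd s$, and proves each factor tempered, whereas you bound the integrand of $L(\theta_t\omega)$ directly in one step.
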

\begin{proof}
Fix some constant $0<c< -\bar \beta$.
Define two random variables $L_1, L_2: \Omega \to [0,\infty]$ by 
\begin{align*}
    L_1(\omega) &\coloneqq \sup_{s \leq 0} \exp\left(-cs+ \int_{s}^0 \beta (\theta_{u} \omega)\, \dd u\right)\\
    L_2(\omega) &\coloneqq \int_{-\infty}^0 e^{cs} \gamma(\theta_s \omega) \, \dd s
\end{align*}
A simple calculation shows that
\begin{align*}
    L(\omega) &= \int_{-\infty}^0 \exp\left(\int_s^0 \beta(\theta_u \omega) \,\dd u\right)\gamma(\theta_s \omega)\,\dd s \\
    &= \int_{-\infty}^0 \exp\left(-cs+\int_s^0 \beta(\theta_u \omega) \,\dd u\right)e^{cs}\, \gamma(\theta_s \omega)\,\dd s \leq L_1(\omega)\, L_2(\omega),
\end{align*}
for each $\omega \in \Omega.$ We complete the proof by showing that both $L_1$ and $L_2$ are tempered.

Let $0<\varepsilon< -\bar \beta -c$. By ergodicity, there exists a full-measure set $\widehat \Omega \subseteq \Omega$ and a random constant $C_\varepsilon(\omega)$, $\omega \in \widehat \Omega$ such that
$$\left|\int_0^t \beta(\theta_s\omega)\, \dd s - \bar \beta t\right| \leq C_\varepsilon(\omega) + \varepsilon|t|, \quad \text{for all } t\in \mathbb R, \, \omega \in \widehat \Omega.$$
For any $t\in \mathbb R$ and $\omega \in \widehat \Omega$ we can now bound
\begin{align*}
    \log L_1(\theta_t \omega) &= \sup_{s \leq 0} \left(-cs+ \int_{s}^0 \beta (\theta_{t+u} \omega)\, \dd u\right) = \sup_{s\leq t}\left(c(t-s)+\int_{s}^t \beta (\theta_{u} \omega)\, \dd u\right)\\
    &= \sup_{s\leq t}\left(c(t-s)+\int_{0}^t \beta (\theta_{u} \omega)\, \dd u - \int_{0}^s \beta (\theta_{u} \omega)\, \dd u\right) \\
    &\leq \sup_{s\leq t}\big(c(t-s)+\bar \beta (t-s) +\varepsilon (|t|+|s|) +2 C_\varepsilon(\omega)\big)\\
    &\leq \sup_{s\leq t} \big((c+\bar\beta +\varepsilon)(t-s) +2 \varepsilon|t| +2 C_\varepsilon(\omega)\big) =2\varepsilon |t| +2 C_\varepsilon(\omega).
\end{align*}
In particular, we have
$$\limsup_{t \to \pm \infty} \frac{1}{|t|}\log L_1(\theta_t \omega) \leq 2\varepsilon, \quad \text{almost surely}.$$
Since $\varepsilon>0$ can be chosen arbitrarily close to zero, $L_1$ is tempered.

For $L_2$, note that $\gamma$ is a polynomial of $|O^*|$ and hence tempered. Thus, for any $0<\varepsilon<c$, there exist a full measure set $\widetilde \Omega\subseteq \Omega$ and a random constant $C'_\varepsilon(\omega),$ $\omega \in \widetilde \Omega$, such that
$$\gamma(\theta_t \omega) \leq C'_\varepsilon(\omega) \,e^{\varepsilon|t|}, \quad \text{for all } t \in \mathbb R, \, \omega \in \widetilde \Omega.$$
For any $t\in \mathbb R$ and $\omega \in \widetilde \Omega$ we can bound
\begin{align*}
    L_2(\theta_t \omega) &= \int_{-\infty}^0 e^{cs}\,\gamma(\theta_{t+s} \omega) \, \dd s =  \int_{-\infty}^t e^{-c(t-s)}\,\gamma(\theta_{s} \omega) \, \dd s\leq C_\varepsilon'(\omega) \int_{-\infty}^t e^{-c(t-s) +\varepsilon |s|}\, \dd s\\
    &\leq  C_\varepsilon'(\omega) \,e^{\varepsilon|t|}\,\int_{-\infty}^t e^{-(c-\varepsilon)(t-s)}\, \dd s = \frac{C_\varepsilon'(\omega)}{c-\varepsilon} e^{\varepsilon|t|}
\end{align*}
In particular, we have
$$\limsup_{t \to \pm \infty} \frac{1}{|t|}\log L_2(\theta_t \omega) \leq \varepsilon, \quad \text{almost surely}.$$
Again, $\varepsilon>0$ can be chosen arbitrarily close to zero, so $L_2$ is tempered as well, completing the proof.
\end{proof}

We can now conclude the announced proof.

\begin{proof}[Proof of Theorem~\ref{theo:attractors1} (b)]
There exists a local random dynamical system generated by solutions of \eqref{RandomDifferentialEquation}. Due to Lemma~\ref{TechnicalLemma}, the solution $\Psi(t,\omega,x)$, indeed, exists for all $t\ge0$ under assumption~\eqref{eq:PBAtt}. Hence, we can apply Lemma~\ref{TransformSystems} (ii) and it sufficient to show that the random dynamical system  $\Psi$ has a pull-back set attractor. For this, we construct an arbsorbing set and use Theorem \ref{theo:ExistencePullback}.

Let $D(\omega)$  be a tempered random compact set. By definition, there exists a tempered random variable $Q:\Omega\rightarrow (0,\infty)$ and a full-measure set $\widehat \Omega \subseteq \Omega$ such that $D(\omega)\subset B_0({Q(\omega)})$ for all $\omega \in \widehat \Omega$.
Choose again some constant $0<c<-\bar \beta$. By ergodicity and possibly restricting to a smaller full-measure set $\widehat \Omega \subseteq \Omega$, for every $\omega \in \widehat \Omega$ there is a time $t_0 > 0$ such that 
\begin{equation}\label{eq:ergodicity}
\int_{-t}^0 \beta (\theta_{s} \omega)\, \dd s \leq -c\,t, \quad \text{for all } t > t_0.
\end{equation} 
By Lemma~\ref{TechnicalLemma}, for all $t > t_0$ and $x\in D(\theta_{-t}\omega)$, we have 
  \begin{align*}
       |\Psi(t,\theta_{-t}\omega,x)|^2&\leq 2 \eta(t,\theta_{-t}\omega, Q(\theta_{-t}\omega)^2)\\
   &\leq
  2 e^{- c t} Q(\theta_{-t}\omega)^2 + 2 \int_{-t}^0 \exp\left(\int_s^0 \beta(\theta_u \omega) \,\dd u\right) \gamma(\theta_s \omega)\,\dd s\\
  &\leq
  2 e^{- c t} Q(\theta_{-t}\omega)^2 + 2 L(\omega),
  \end{align*}
  where we use \eqref{Estimate07} to obtain the second inequality.
Since $Q$ is tempered, it follows that $\lim_{t \to \infty} e^{- c t} Q(\theta_{-t}\omega)^2=0$. Define 
  $$\hat R(\omega):=\sqrt{1+2L(\omega)},$$
  which is also tempered.
  There exists $T\geq t_0 >0$ such that
  \[
  \Psi(t,\theta_{-t}\omega,D(\theta_{-t}\omega))
  \subset
  \bar B_0(\hat R(\omega)) \text{ for all } t\geq T,
  \]
  where $\bar B_0(\hat R(\omega))$ denotes the centered closed ball of radius $\hat R(\omega)$. Consequently, $\bar B_0(\hat R(\omega))$ is an absorbing set for each $\omega \in \widehat \Omega$. The existence of a random pull-back set attractor for $\Psi$ follows via Theorem \ref{theo:ExistencePullback} and, hence, we can apply Lemma~\ref{TransformSystems} (ii) to obtain the claim.
  \end{proof}

  \section{Strong Completeness and Existence of Set Attractors (Proof of Theorem~\ref{theo:attractors2})}\label{sec:Attr2}
In this section we prove Theorem \ref{theo:attractors2}, which gives different conditions for strong completeness and existence of a weak set attractor under growth bounds on the derivative of $\rho$. 

\subsection{General Criterion}\label{sec:Attr2GC}
We first establish a criterion for strong completeness and existence of weak set attractors, which holds for a general filtered local RDS on $\mathbb R^d$ which satisfies the strong Markov property \eqref{eq:StrongMarkov}. The proof is inspired by \cite{LeimbachMattinglyScheutzow22}.

\begin{lemma}\label{lemm:ballcrit}Consider a filtered local RDS $(\theta, \varphi)$ on $\mathbb R^d$ that satisfies the strong Markov property \eqref{eq:StrongMarkov}.
For an increasing sequence of positive radii $0 < r_1 < r_2 < \dots$ with $r_n\to \infty$, let $B_n := B_{0}(r_n)$ be the centered open ball with radius $r_n$ and let $C_n \coloneqq \bar B_n$ be its closure. Define two sequences of random times $\tau^\uparrow_1, \tau^\uparrow_2, \dots$ and $\tau^\downarrow_1, \tau^\downarrow_2, \dots$ by
\begin{align*}
    \tau^\uparrow_n(\omega) &:= \inf\{t \geq 0 : \varphi(t, \omega, C_n) \nsubseteq B_{n+1}\}, ~ \forall\, n\geq 1,\\
    \tau^\downarrow_n(\omega) &:= \inf\{t \geq 0 : t< \tau^\star(\omega,C_n)\text{ and }\varphi(t, \omega, C_n) \subseteq C_{n-1}\}, ~ \forall\, n\geq 2
\end{align*}
and $\tau^\downarrow_1(\omega) = \infty$. 
\begin{enumerate}
    \item[(i)] If there exists a sequence $t_1, t_2, \dots$ of positive times with  $\sum_{n = 1}^\infty t_n = \infty$, such that 
    $$\liminf_{n \to \infty} \mathbb P (\tau^\uparrow_n \geq t_n) > 0,$$
    then $\varphi$ is strongly complete.
    \item[(ii)] If there exists a sequence $t_1, t_2, \dots$ of positive times with $t_n \to 0$, such that 
    \begin{equation}\label{ineq:ballcritattrcond}
        \liminf_{n \to \infty} \mathbb P (\tau^\downarrow_n < \tau^\uparrow_n \wedge t_n) > \frac{1}{2},
    \end{equation}
    then $\varphi$ admits a weak set attractor.
\end{enumerate}
\end{lemma}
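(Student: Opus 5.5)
The plan for both parts is to follow the image of a compact set through the shells $C_{n+1}\setminus B_n$ along a concatenated sequence of stopping times, and to use the strong Markov property \eqref{eq:StrongMarkov} to decouple successive steps. First, the argument of Proposition \ref{prop:tau} (i), (ii) shows that $\tau^\uparrow_n$ and $\tau^\downarrow_n$ are $(\mathcal F_t^+)$-stopping times. By time-continuity and openness of the survival domain, at time $\tau^\uparrow_n<\infty$ we have $\varphi(\tau^\uparrow_n,\omega,C_n)\subseteq C_{n+1}$. At $\tau^\downarrow_n<\infty$ we have $\varphi(\tau^\downarrow_n,\omega,C_n)\subseteq C_{n-1}$ by definition.

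\emph{Part (i).} Fix $m$; it suffices to show $\tau^\star(\omega,C_m)=\infty$ almost surely, since $\mathbb R^d=\bigcup_m C_m$. Set $\sigma_0=0$ and $\sigma_{k+1}=\sigma_k+\tau^\uparrow_{m+k}(\theta_{\sigma_k}\omega)$, which are stopping times by Proposition \ref{prop:tau} (iii). If some $\sigma_k=\infty$ we stop, and then $\varphi(t,\omega,C_m)\subseteq B_{m+k+1}$ for all $t$. By the cocycle property, $\varphi(\sigma_k,\omega,C_m)\subseteq C_{m+k}$. If $\tau^\star(\omega,C_m)<\infty$, then the image is unbounded before $\tau^\star$, so it must leave every ball first, giving $\sigma_k<\tau^\star(\omega,C_m)$ for all $k$. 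Hence $\tau^\star(\omega,C_m)\ge\sum_k(\sigma_{k+1}-\sigma_k)$.

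By \eqref{eq:StrongMarkov}, the increments $\sigma_{k+1}-\sigma_k$ are independent and distributed as $\tau^\uparrow_{m+k}$. Replace $t_n$ by $t_n\wedge 1$, which keeps $\sum_n t_n=\infty$. With $p>0$ below the $\liminf$, the variables $Y_k=t_{m+k}\mathds 1\{\sigma_{k+1}-\sigma_k\ge t_{m+k}\}$ are independent, bounded by $1$, and have $\sum_k\mathbb E Y_k=\infty$. Since $\operatorname{Var}(\sum_{k\le K}Y_k)\le\sum_{k\le K}\mathbb EY_k$, Chebyshev's inequality gives $\sum_k Y_k=\infty$ almost surely. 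Therefore $\tau^\star(\omega,C_m)=\infty$ almost surely.

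\emph{Part (ii).} We apply Theorem \ref{theo:existenceWeakSetAtt}. Choose $n_0$ and $p>1/2$ with $\mathbb P(\tau^\downarrow_n<\tau^\uparrow_n\wedge t_n)\ge p$ for all $n\ge n_0$. Starting from a bounded $B\subseteq C_m$ with $m\ge n_0$, we build a level process $L_k$ with concatenated stopping times $\sigma_k$. At level $n>n_0$ we run to $\tau^\downarrow_n\wedge\tau^\uparrow_n$ and move to $n-1$ or $n+1$ accordingly. At level $n_0$ we only wait for $\tau^\uparrow_{n_0}$, treating the levels below $n_0$ as $n_0$. Two cases must be handled: if both times are infinite the process freezes, which is harmless; and strong completeness must be established first, which will follow because infinitely many up-steps in finite time are excluded by the analysis below.

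By strong Markov, $L_k$ is dominated by a random walk reflected at $n_0$ with down-probability at least $p$. So each excursion above $n_0$ reaches level $N$ with probability at most $((1-p)/p)^{N-n_0}$. Moreover, a run of successful down-steps from level $n$ back to $n_0$ takes time at most $\sum_{j=n_0}^n t_j$. Because $t_j\to0$, this is small per level and $o(n)$ overall. Then $C=C_N$ is the compact set required by Theorem \ref{theo:existenceWeakSetAtt}, and the time $T$ is chosen so that, with high probability, the walk started at $m$ has reached $n_0$ by time $T$.

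The main obstacle is converting the embedded-chain estimates into the fixed-time statement $\mathbb P(\varphi(t,\omega,B)\subseteq C_N)\ge1-\varepsilon$ uniformly in $t>T$. The difficulty is that failed steps and the waiting times at $n_0$ have uncontrolled durations. My first approach is to bound the probability that the excursion straddling time $t$ reaches $N$. For this I would use that each excursion begins with an independent copy of $\tau^\uparrow_{n_0}$, which is positive with nondegenerate law, so the number of excursions started in $[t-1,t]$ is tight. I would also argue that excursions lasting longer than a fixed time are rare, since the down-steps at high levels are fast. Together with the geometric bound on reaching $N$, this should give the fixed-time estimate.
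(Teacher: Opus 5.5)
Your part (i) is essentially the paper's argument. One repair is needed: concatenate the capped increments $\tau^\uparrow_{m+k}\circ\theta_{\sigma_k}\wedge t_{m+k}$, as the paper does, so that every $\sigma_k$ is finite. Proposition~\ref{prop:tau}~(iii) and \eqref{eq:StrongMarkov} are both formulated for finite stopping times, so they then apply verbatim. This costs nothing, since your $Y_k$ only sees whether the increment is at least $t_{m+k}$. Your Chebyshev argument is a fine substitute for the three-series theorem.

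Part (ii), however, has a genuine gap, located exactly at the step you call the main obstacle. The fixed-time bound required by Theorem~\ref{theo:existenceWeakSetAtt} is never proved, and with your uncapped scheme the sketched route cannot work.

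\begin{itemize}
\item Each step runs until $\tau^\downarrow_n\wedge\tau^\uparrow_n$, and the wait at $n_0$ lasts the full $\tau^\uparrow_{n_0}$. So the sojourn times of your level process are uncontrolled: failed steps, waits, and even your ``successful'' down-steps.
\item In particular, $\tau^\downarrow_n<\tau^\uparrow_n$ does not give $\tau^\downarrow_n<t_n$, so your time bound $\sum_j t_j$ for a run of down-steps is unjustified.
\item These times may have infinite mean or be infinite. A freeze at a level $L\ge N$ is then not harmless: the bookkeeping bound stays at $C_{L+1}\not\subseteq C_N$ forever.
\item More fundamentally, the excursion straddling a fixed time $t$ is length-biased. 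So ``long excursions are rare'' is not enough; you need the mean duration of a cycle (wait plus excursion) to be finite, and nothing in your construction provides it.
\end{itemize}

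The missing idea is to cap every step at $t_N$, exactly as in \eqref{ineq:ballcritattrcond}. From level $N$, run for $\tau^\downarrow_N\wedge\tau^\uparrow_N\wedge t_N$. Move down if and only if $\tau^\downarrow_N<\tau^\uparrow_N\wedge t_N$, and up otherwise. This is legitimate, since before $\tau^\uparrow_N$ the image of $C_N$ stays in $B_{N+1}$. Then all sojourns are bounded by $\max_k t_k$, nothing freezes, and $(N_{n,\cdot},\sigma_{n,\cdot})$ is a positive recurrent Markov renewal process. The paper then uses $t_n\to0$ for aperiodicity and invokes \cite[Chapter 10: Theorem 5.22]{Cinlar75} to get $\mathbb P(\hat N_{n,t}=n')\to\tilde\nu(n')$, from which $C=C_{\hat n+1}$ is read off.

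With the cap, your own excursion route can also be finished, as a genuine alternative.
\begin{itemize}
\item Let $V_1<V_2<\dots$ be the successive arrival times at $n_0$. By the strong Markov property, the cycles $V_{j+1}-V_j$ are i.i.d.\ with mean at most $t_{n_0}+\max_k t_k/(2p-1)$.
\item The expected number of $V_j$ in any window of length $s$ is at most the renewal function $U(s)$ of the i.i.d.\ a.s.\ positive waits at $n_0$.
\item Split according to whether the current cycle began within time $s$ before $t$. This gives $\sup_{t}\mathbb P(\text{level at }t\ge N,\ t\ge V_1)\le U(s)\big(\tfrac{1-p}{p}\big)^{N-n_0-1}+U(1)\sum_{k\ge0}\mathbb P(V_2-V_1>s+k)$.
\item The right-hand side is small once $s$ and then $N$ are large, independently of the initial level $m$. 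Finally, choose $T$ with $\mathbb P(V_1>T)$ small.
\end{itemize}
This yields the uniform-in-$t$ statement directly. It uses only $\sup_k t_k<\infty$, instead of the aperiodicity and convergence that the paper extracts from $t_n\to0$.
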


\begin{figure}
    \centering
    \includegraphics[width = 0.3 \textwidth]{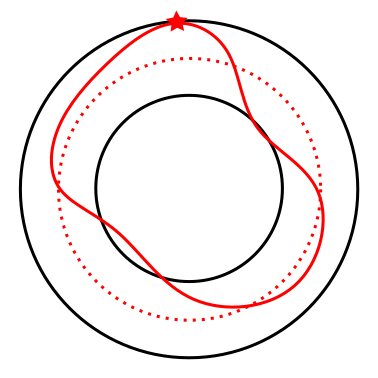}~~~~~~
    \includegraphics[width = 0.3 \textwidth]{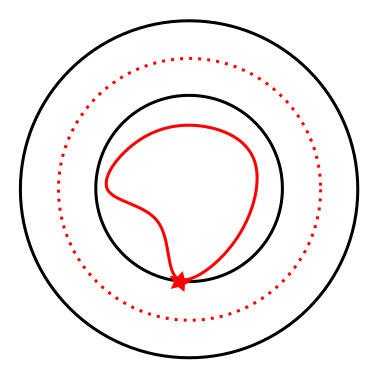}
    \caption{Illustration of hitting times $\tau^\uparrow_n$ (left) and $\tau^\downarrow_n$ (right). The dotted red line represents $\partial B_n$, the inner black circle $\partial B_{n-1}$, the outer black circle $\partial B_{n+1}$ and the solid red line $\partial\varphi(\tau^{\uparrow/\downarrow}_n(\omega), \omega, C_n)$.}
    \label{fig:tauUpDown}
\end{figure}

\begin{proof}
    First note that, by joint continuity of $\varphi(\cdot, \omega, \cdot)$, the process 
    $$(t, \omega) \mapsto \begin{cases} \max_{x\in C_n} |\varphi(t,\omega, x)|,&\text{if } t<\tau^\star(\omega,C_n)\\
    \infty,&\text{otherwise,}
    \end{cases}$$ 
    is continuous as a $[0,\infty]$-valued process and $(\mathcal F_t)$-adapted since the uncountable supremum can be replaced by a supremum over $C_n \cap \mathbb Q^d$. Here, we used the fact that $\tau^\star(\omega,C_n)$ is a $(\mathcal F_t)$-stopping time, see Proposition \ref{prop:tau} (i). Consequently, the random times $\tau^\uparrow_1, \tau^\uparrow_2, \dots$ and $\tau^\downarrow_1, \tau^\downarrow_2, \dots$ are $(\mathcal F_t)$-stopping times. Furthermore, continuity implies
    \begin{align*}
        \tau^\uparrow_n(\omega)<\infty \quad &\Rightarrow \quad \varphi(\tau^\uparrow_n(\omega), \omega, C_n) \subseteq C_{n+1}, \quad \text{and}\\
        \tau^\downarrow_n(\omega)<\infty\quad &\Rightarrow \quad \varphi(\tau^\downarrow_n(\omega), \omega, C_n) \subseteq C_{n-1},
    \end{align*}
    and that these stopping times are almost surely positive.
    
    (i): Since countable intersections of almost sure events are almost sure, it is sufficient to show that for every $n \geq 1$ we have $\mathbb P(\forall\, z \in C_n, \, \tau^\star(\omega, z) = \infty) = 1.$ Let $n \geq 1$. For each $m\geq n$ define a random time $\tau^\uparrow_{n,m}$, recursively, by 
    \begin{align*}
        \tau^\uparrow_{n, n}(\omega) &= 0, \quad \text{and}\\
        \tau^\uparrow_{n,m+1}(\omega) &= \tau^\uparrow_{n,m}(\omega) + \left(\tau^\uparrow_{m}(\theta_{\tau^\uparrow_{n,m}(\omega)}\omega) \wedge t_m\right), ~ \forall \, m \geq n.
    \end{align*}
    According to Proposition \ref{prop:tau} (iii), these are $(\mathcal F_t^+)$-stopping times.
    By construction, we have
    \begin{align*}
        \varphi(t, \omega, C_n) \subseteq B_m, ~ \forall \, t &< \tau^\uparrow_{n,m}(\omega) &&\Rightarrow& \tau^\star(\omega, z) &\geq \tau^\uparrow_{n,m}(\omega), ~\forall \, z \in C_n.
    \end{align*}
    If we can show 
    \begin{equation}\label{waitingtimeconv}
        \sum_{k = n}^\infty \tau^\uparrow_k(\theta_{\tau^\uparrow_{n,k}(\omega)}\omega) \wedge t_k = \lim_{m \to \infty} \tau^\uparrow_{n,m}(\omega) = \infty, \text{ almost surely,}
    \end{equation}
     strong completeness follows. In the following, we write $\tilde \tau^\uparrow_k := \tau_k^\uparrow \circ \theta_{\tau^\uparrow_{n,k}}$ for ease of notation. Note that $\tau_k^\uparrow$ and $\tilde\tau_k^\uparrow$ are identically distributed. By the strong Markov property \eqref{eq:StrongMarkov}, the family of random variables $(\tilde \tau^\uparrow_k \wedge t_k)_{k \geq n+1}$ is independent. Thus \eqref{waitingtimeconv} is a tail event in the sense of Kolmogorov's 0-1-law and it is sufficient to show that \eqref{waitingtimeconv} holds with positive probability. 
     Without loss of generality, we may assume $t_n \leq 1$, as
     $\sum_{n = 1}^\infty t_n = \infty$ also implies $\sum_{n = 1}^\infty (t_n\wedge 1) = \infty$. Now we can bound
     \begin{align*}
         \sum_{k =n}^\infty \tau^\uparrow_k(\theta_{\tau^\uparrow_{n,k}(\omega)}\omega)\wedge t_k =\sum_{k =n}^\infty \tilde \tau^\uparrow_k(\omega) \wedge t_k &\geq \sum_{k =n}^\infty t_k\, \mathds 1_{\tilde \tau^\uparrow_k(\omega) \geq t_k}.
     \end{align*}
     By Kolmogorov's three series theorem, the series on the right hand side diverges with positive probability, as the summands are bounded from above by 1 and satisfy
     \begin{align*}
         \sum_{k =n}^\infty \mathbb E\left[t_k\, \mathds 1_{\tilde \tau^\uparrow_k(\omega) \geq t_k}\right] = \sum_{k =n}^\infty t_k \,\mathbb P\left(\tilde \tau^\uparrow_k(\omega) \geq t_k\right) = \infty.
     \end{align*}
    (ii): The existence of a weak set attractor is established by verifying the conditions of Theorem \ref{theo:existenceWeakSetAtt} and the proof is inspired by the arguments in \cite[Section 7]{LeimbachMattinglyScheutzow22}. We recursively define $(\mathcal F_t^+)_{t \geq 0}$-stopping times (see Proposition \ref{prop:tau} (iii) again) $(\sigma_{n,m}(\omega))_{n \in \mathbb N,\, m \in \mathbb N_0, \, \omega \in \Omega}$ and random variables $(N_{n,m}(\omega))_{n \in \mathbb N,\, m \in \mathbb N_0, \, \omega \in \Omega}$ by $\sigma_{n,0}(\omega) = 0$ and $N_{n,0}(\omega) = n$, for all $n \in \mathbb N$ and all $\omega \in \Omega$, as well as
    $$\sigma_{n,m+1}(\omega) = \sigma_{n,m}(\omega) + \left(\tau^\downarrow_{N_{n,m}(\omega)}(\theta_{\sigma_{n,m}(\omega)}\omega) \wedge \tau^\uparrow_{N_{n,m}(\omega)}(\theta_{\sigma_{n,m}(\omega)}\omega) \wedge t_{N_{n,m}(\omega)}\right)$$
    and
    $$N_{n,m+1}(\omega) = \begin{cases}N_{n,m}(\omega) - 1,& \text{if } \tau^\downarrow_{N_{n,m}(\omega)}(\theta_{\sigma_{n,m}(\omega)}\omega) < \tau^\uparrow_{N_{n,m}(\omega)}(\theta_{\sigma_{n,m}(\omega)}\omega) \wedge t_{N_{n,m}(\omega)} \\
    N_{n,m}(\omega) + 1,& \text{otherwise.}\end{cases}$$
    Since $\tau^\uparrow_n$ and $\tau^\downarrow_n$ are almost surely positive, $\sigma_{n,m}$ is strictly increasing in $m$. Using induction in $m$, one can readily check that
    \begin{equation}\label{eq:BallInBall}
        \varphi(t, \omega, C_n) \subseteq B_{N_{n,m}(\omega) +1}, ~\forall\, t \in [\sigma_{n, m}(\omega), \sigma_{n, m+1}(\omega)).
    \end{equation}
    Furthermore, for each fixed $n \in \mathbb N$, the pair $(N_{n,\cdot}, \sigma_{n, \cdot})$ defines a \emph{Markov renewal process} in the sense of \cite[Chapter 10: Definition 1.1]{Cinlar75} (see~also \cite[Definition 7.2]{LeimbachMattinglyScheutzow22}).    
    Thus, the process $(N_{n,\cdot})$ is a discrete time, time homogeneous Markov process with  transition probabilities given by 
    $$\mathbb P\left(N_{n,m+1} = n'-1 \mid N_{n,m} = n'\right) = p_{n'}$$
    and 
    $$\mathbb P\left(N_{n,m+1} = n'+1 \mid N_{n,m} = n'\right) = 1-p_{n'},$$
    where $p_{n'} := \mathbb P (\tau^\downarrow_{n'} < \tau^\uparrow_{n'} \wedge t_{n'})$. In particular, the transition probabilities do not depend on $n$. 
    By \eqref{ineq:ballcritattrcond} there exists a $p^*> \frac{1}{2}$ and an $n^* \in \mathbb N$, such that $p_{n'} \geq p^*$ for each $n'\geq n^*$. Thus, the set $[1, n^*-1] \cap \mathbb N$ is recurrent and for any $n \in \mathbb N$ and for almost every $\omega \in \Omega$ there are infinitely many $m \in \mathbb N_0$ with $N_{n,m}(\omega) \leq n^*-1$. Furthermore, there exists an $\varepsilon>0$ such that $\mathbb P(\tau^\downarrow_{n'} \wedge \tau^\uparrow_{n'} \wedge t_{n'}\geq \varepsilon) \geq \varepsilon$ for all $1\leq n'<n^*$. By the Borel Cantelli Lemma and the strong Markov property, this implies that 
    $\sigma_{n,m+1}-\sigma_{n,m}\geq \varepsilon$ infinitely often almost surely, and consequently
    $\lim_{m \to \infty}\sigma_{n,m}(\omega) = \infty$, for every $n \in \mathbb N$ and almost every $\omega \in \Omega.$ Thus, for each $n \in \mathbb N$ and each $t \geq 0$, the random variable $M_{n,t}: \Omega \to \mathbb N_0$ uniquely given by 
    $$t \in \left[\sigma_{n, M_{n,t}(\omega)}(\omega), \sigma_{n, M_{n,t}(\omega)+1}(\omega)\right)$$
    is well defined for almost every $\omega \in \Omega$. This allows us to introduce the \emph{semi-Markov process} (see \cite[Definition 10.1.15]{Cinlar75}) $(\hat N_{n,t}(\omega))_{n \in \mathbb N, \, t\geq 0, \omega \in \Omega}$ by $\hat N_{n,t}(\omega) := N_{n,M_{n,t}(\omega)}(\omega).$ Now \eqref{eq:BallInBall} can be rewritten as
    \begin{equation}\label{eq:BallInBall2}
        \varphi(t, \omega, C_n) \subseteq B_{\hat N_{n,t}(\omega)+1}, ~\forall\,t\geq 0.
    \end{equation}

    Let $\nu_{n,m}$ denote the law of $N_{n,m}$. Standard Markov chain arguments (cf.~e.g.~\cite{MeynTweedie93,BenaimHurth22}) show that there exists a probability measure $\nu$ on $\mathbb N$, which is stationary for the Markov processes $(N_{n,\cdot})$, such that
    $$\nu = \lim_{m \to \infty} \frac{1}{m} \sum_{k=1}^m \nu_{n,k}, ~\forall\, n \geq n^*.$$
    Restricting our attention to $n$ which lie in the support of $\nu$, the Markov process $(N_{n,\cdot})$ is irreducible and recurrent. 
    If the support of $\nu$ is bounded, the conditions of Theorem \ref{theo:existenceWeakSetAtt} can be verified easily, so we assume that $\nu$ has unbounded support. 
    Since $t_n \to 0$ as $n \to \infty$, this implies
    $$\liminf_{m \to \infty} \big(\sigma_{n,m+1}(\omega)-\sigma_{n,m}(\omega)\big) \leq \liminf_{m \to \infty} t_{N_{n,m}(\omega)} = 0,$$
    for all $n \in \operatorname{supp}(\nu)$ and almost every $\omega \in \Omega$. Therefore, the Markov renewal process must be aperiodic and we can invoke \cite[Chapter 10: Theorem 5.22]{Cinlar75} (see~also \cite[Theorem 7.5]{LeimbachMattinglyScheutzow22}) to find that
    $$\lim_{t \to \infty} \mathbb P(\hat N_{n,t} = n') = \tilde \nu(n'), ~\forall\, n' \in \operatorname{supp}(\nu),$$
    where $\tilde \nu$ is the probability measure on $\mathbb N$ given by
    $$\tilde\nu(n) = \left(\sum_{k = 1}^\infty \nu(k)\,\mathbb E[\sigma_{k,1}]\right)^{-1} \nu(n)\, \mathbb E[\sigma_{n,1}].$$
    Note that the sum is finite since $\mathbb E[\sigma_{k,1}]$ can be bounded by $t_k\leq\max_{n \geq 1} t_n < \infty$ and positive since $\nu$ is a probability measure and $\sigma_{k,1}$ is almost surely positive.

 We are now in a position to verify the condition of Theorem \ref{theo:existenceWeakSetAtt}. Let $\varepsilon > 0$, choose $\hat n$ such that $\tilde \nu([1, \hat n]) > 1 - \frac{\varepsilon}{2}$ and set $C := C_{\hat n+1}$. Let $B \subset \mathbb R^d$ be some bounded set and choose $n \geq n^*$ such that $B \subseteq C_n$. Choose $T \geq 0$ large enough, such that 
  $$\mathbb P(\hat N_{n,t} \leq \hat n) > 1-\varepsilon,~\forall\, t\geq T.$$
 If $\hat N_{n,t} \leq \hat n$, then, by \eqref{eq:BallInBall2}
 $$\varphi(t, \omega, B) \subseteq \varphi(t, \omega, C_n) \subseteq C_{\hat N_{n,t}(\omega)+1} \subseteq C_{\hat n+1} = C,$$
completing the proof.
\end{proof}

\subsection{Contraction Estimates}

For the remainder of this section, we show that the conditions in Theorem \ref{theo:attractors2} imply the corresponding conditions in Lemma \ref{lemm:ballcrit}. Using the random coordinate transform from Section \ref{sec:ProofAttrI} to reduce the SDE \eqref{SDE2} to a random differential equation, is not helpful here, since it does not leave the disks $C_n$ invariant. However, we can still use random rotations, to reduce the complexity of the drift term.

We denote by $\mathcal R: \mathbb R \to \mathbb R^{2\times 2}$ the rotation matrix
  $$\mathcal R (\chi) := \begin{pmatrix}\cos(\chi)&-\sin(\chi)\\\sin(\chi)&\cos(\chi)\end{pmatrix}.$$
  \begin{lemma}\label{lemm:contraction}
Let $X(t) = \varphi(t, \omega, x)$ be a solution to \eqref{SDE2} and let $\chi_X: [0,\infty) \to \mathbb R$ be given by 
\begin{equation}\label{eq:chidef}
    \chi_X(t) = \int_0^t \rho(|X(s)|)\,\dd s.
\end{equation}
Then 
\begin{equation}\label{contractionBound}
    |\varphi(t, \omega, x)| \leq e^{-t}\,|x| +\sup_{s \in [0,t]} \left|\int_s^t \mathcal R(\chi_X(u)) \,\dd W(u)\right|.
\end{equation}
\end{lemma}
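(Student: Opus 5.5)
We pass to the co-rotating frame, in which the rotational drift disappears, and then treat the resulting equation by variation of constants. Let $J=\begin{pmatrix}0&1\\-1&0\end{pmatrix}$. Since $\mathcal R(\chi)$ is generated by $-J$, we have $\frac{\dd}{\dd\chi}\mathcal R(\chi) = \mathcal R(\chi)(-J)$, and $\mathcal R$ commutes with $J$. Define
$$Y(t):=\mathcal R(\chi_X(t))\,X(t),$$
where $\chi_X$ is absolutely continuous (indeed $C^1$) with derivative $\rho(|X(t)|)$ up to the blow-up time. Because $\chi_X$ has finite variation, the product rule applies without any It\^o correction:
$$\dd Y = \mathcal R(\chi_X)\big(-X\,\dd t+\rho(|X|)JX\,\dd t+\dd W\big) - \rho(|X|)\,\mathcal R(\chi_X) JX\,\dd t = -Y\,\dd t + \mathcal R(\chi_X(t))\,\dd W(t).$$
The shear term cancels exactly. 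Since $X$ is adapted, $\mathcal R(\chi_X(u))$ is an adapted bounded integrand, so the stochastic integral is well defined on $[0,t]$ for $t<\tau^\star(\omega,x)$. This pathwise identity is the step I would check most carefully, including the sign convention of $\mathcal R$ versus $J$. If the sign came out wrong, I would simply replace $\chi_X$ by $-\chi_X$; the norm bound is insensitive to this.

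Next, by variation of constants,
$$Y(t)=e^{-t}x+\int_0^t e^{-(t-u)}\,\dd M(u),\qquad M(u):=\int_0^u\mathcal R(\chi_X(v))\,\dd W(v).$$
To pass to the supremum form, I integrate by parts in $u$, which is legitimate because $e^{-(t-u)}$ is deterministic and $C^1$. Writing $N_t(s):=M(t)-M(s)=\int_s^t\mathcal R(\chi_X)\,\dd W$, we have $\dd M(u) = -\dd_u N_t(u)$, so
$$\int_0^t e^{-(t-u)}\,\dd M(u) = e^{-t}N_t(0)+\int_0^t e^{-(t-u)}N_t(u)\,\dd u.$$
Hence
$$\Big|\int_0^t e^{-(t-u)}\,\dd M(u)\Big|\le \sup_{s\in[0,t]}|N_t(s)|\Big(e^{-t}+\int_0^t e^{-(t-u)}\,\dd u\Big)=\sup_{s\in[0,t]}|N_t(s)|,$$
since the bracket equals exactly $e^{-t}+(1-e^{-t})=1$.

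Finally, $\mathcal R$ is an isometry, so $|\varphi(t,\omega,x)|=|X(t)|=|Y(t)|$, and \eqref{contractionBound} follows from the triangle inequality. The only subtlety is interpreting the stochastic integrals pathwise, for all $s\le t$ simultaneously. I would do this by taking a continuous version of $M$ and noting that the bound holds almost surely, up to any $t<\tau^\star$.
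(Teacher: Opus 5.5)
Your proof is correct and follows the paper's argument essentially step for step. The co-rotating substitution $Y=\mathcal R(\chi_X)X$ cancels the shear, and your sign check $\frac{\dd}{\dd\chi}\mathcal R(\chi)=-\mathcal R(\chi)J$ is right. Variation of constants and integration by parts against $e^{u-t}$ then bound the stochastic convolution by $\sup_{s\in[0,t]}|\int_s^t\mathcal R(\chi_X)\,\dd W|$, exactly as in the paper.

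Your closing remark on interpreting the integrals is more careful than the paper. Note, however, that the purely pathwise reading (the Stieltjes integral defined via integration by parts, valid since $\chi_X$ is $C^1$) is what makes the bound hold for all $x$ simultaneously at a fixed $\omega$, as needed in Lemma \ref{lemm:contr2}. An almost-sure statement via a continuous version for fixed $x$ would not give this.
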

\begin{proof}
Let $Y: [0,\infty) \to \mathbb R^2$ and $\widetilde W: [0, \infty) \to \mathbb R^2$ be given by
\begin{align*}
    Y(t) &:= \mathcal R(\chi_X(t)) X(t) &&\text{and}& \widetilde W(t) := \int_0^t \mathcal R(\chi_X(s)) \,\dd W(s).
\end{align*}
A simple calculation shows that $Y$ satisfies the SDE
\begin{equation*}\label{sde:xtilde}
    \dd Y(t) = - Y(t) \,\dd t + \dd \widetilde W(t).
\end{equation*}
By variation of constants, $Y(t)$ is given by
$$Y(t) = e^{-t}\,Y(0) + \int_0^t e^{s-t} \,\dd\widetilde W(s).$$
Since $ |X(t)| = |Y(t)|$, we get the bound
$$|X(t)| \leq e^{-t}\, |X(0)| + \left| \int_0^t e^{s-t} \,\dd\widetilde W(s)\right|.$$
Finally, using integration by parts, we can estimate
\begin{align*}
   \left| \int_0^t e^{s-t} \,\dd\widetilde W(s)\right| &= \left| \widetilde W(t) - \int_0^t e^{s-t}\, \widetilde W(s) \,\dd s\right|\\
   &= \left| e^{-t}\left(\widetilde W(t)-\widetilde W(0)\right) + \int_0^t e^{s-t}\left(\widetilde W(t)-\widetilde W(s)\right) \,\dd s\right| \\
   &\leq  \sup_{s \in [0,t]}  \left|\widetilde W(t)-\widetilde W(s)\right| = \sup_{s \in [0,t]} \left|\int_s^t \mathcal R(\chi_X(u)) \,\dd W(u)\right|
\end{align*}
to complete the proof.
\end{proof}
The previous lemma is not directly useful, since the integral on the right hand side of \eqref{contractionBound} is highly dependent on $\chi_X$ and thus the exact path of the solution $(X(t))$. In order to control the behavior of bounded sets of initial conditions, we are interested in an upper bound for the integral in \eqref{contractionBound} which may depend on the noise realization, but crucially not on the precise initial condition $X(0)$. For this, it is helpful to note that, as long as $|X(t)|$ is contained in a compact interval $I \subset \mathbb R_{\geq 0}$, the derivative of $\chi_X$ can only take values in $\rho(I)$. Thus, we are looking for a way to control the integral in \eqref{contractionBound}, solely by the Lipschitz constant of $\chi_X$. With this in mind, for any $b\in \mathbb R$ and $\beta \geq 0$, define a set of functions $ L (b,\beta)$ by
$$L(b,\beta) := \{\chi: [0,\infty) \to \mathbb R : (t \mapsto \chi(t)-bt) \in \operatorname{Lip}(\beta)\},$$
where $\operatorname{Lip}(\beta)$ denotes the set of Lipschitz-continuous functions with Lipschitz constant less than or equal to $\beta$.
For each $T\geq 0$, (formally) define a random variable $Q_{b, \beta, T}$ by
\begin{equation}\label{eq:QDef}
    Q_{b, \beta, T} (\omega) := \sup_{0\leq s<t\leq T,~\chi \in L(b,\beta)} \left|\int_{s}^{t} \mathcal R(\chi(u))\,\dd W(u)\right|.
\end{equation}
If the integral on the right hand side is interpreted as an It\^o integral, this definition is problematic since it considers a supremum over uncountably many functions $\chi$ of an object that is defined only up to a $\chi$-dependent null set. Instead, since the integrand is of finite variation, we can interpret the integral as a pathwise Stieltjes integral via the integration by parts formula, leading to the definition
\begin{equation}\label{eq:QDef2}
    Q_{b, \beta, T} (\omega) := \sup_{0\leq s<t\leq T,~\chi \in L(b,\beta)} \left|\mathcal R(\chi(t))W(t)-\mathcal R(\chi(s))W(s) -\int_{s}^{t}\dd \mathcal R(\chi(u))\, W(u) \right|.
\end{equation}
For $Q_{b, \beta, T}$ to be a well-defined random variable, it remains to show measurability.
\begin{lemma}\label{lemm:youngloeve}
For any $b \in \mathbb R$, any $\beta, T\geq 0$ and every $q>0$, $Q_{b, \beta, T}$ is measurable and we have
\begin{enumerate}
    \item[(i)]  $\mathbb P (Q_{b, \beta, T} < q) > 0$ and 
    \item[(ii)] $\mathbb P (Q_{b, \beta, T} = \infty) = 0.$
\end{enumerate}
\end{lemma}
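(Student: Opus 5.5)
We will establish measurability and (ii) first, and then prove (i) by a small-ball argument.

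\emph{Measurability.} We reduce the supremum in \eqref{eq:QDef2} to a countable one. Fix $\omega$, so that $W=\omega$ is continuous. For $\chi\in L(b,\beta)$, the function $u\mapsto \mathcal R(\chi(u))$ is Lipschitz with constant at most $|b|+\beta$. Hence the expression inside the supremum is continuous in $(s,t)$. It is also continuous in $\chi$ with respect to uniform convergence on $[0,T]$, because
\[
\int_s^t \dd\mathcal R(\chi(u))\,W(u)=\int_s^t \chi'(u)\,\mathcal R'(\chi(u))\,W(u)\,\dd u ,
\]
and convergence of the derivatives can be avoided by integrating by parts back. More simply, one approximates $\chi$ by piecewise linear interpolations $\chi_k$ with rational slopes in $[b-\beta,b+\beta]$, rational breakpoints and rational initial value. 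These converge uniformly, and the Stieltjes integrals converge because the total variation of $\mathcal R(\chi_k)$ is uniformly bounded. Note that $\chi(0)$ matters only modulo $2\pi$.

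This yields a countable family $\mathcal D\subset L(b,\beta)$ and rational $s,t$ realizing the same supremum. Each term is a measurable function of $\omega$, so $Q_{b,\beta,T}$ is measurable.

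\emph{Part (ii).} Writing $\mathcal R'$ for the derivative, the bound
\[
Q_{b,\beta,T}\le 2\|W\|_{\infty,[0,T]}+2\|W\|_{\infty,[0,T]}\,T(|b|+\beta)
\]
is deterministic and finite for every continuous path. This gives (ii) immediately, in fact with $Q<\infty$ everywhere.

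\emph{Part (i).} This is the main point. The crude bound above only gives $\mathbb P(Q<q)>0$ when $q>0$ is arbitrary relative to the factor $1+T(|b|+\beta)$. Since the bound is linear in $\|W\|_\infty$, the inclusion
\[
\{Q<q\}\supseteq \bigl\{\|W\|_{\infty,[0,T]}<q/(2+2T(|b|+\beta))\bigr\}
\]
holds. The latter event has positive probability by the small-ball property of Brownian motion, i.e.\ the support of Wiener measure contains the zero path. This concludes (i).

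The only genuine obstacle is therefore the measurability reduction to a countable family. The key facts needed there are:
\begin{enumerate}
\item The Stieltjes integral is continuous under uniform convergence of integrands of uniformly bounded variation against a continuous $W$. This follows from integrating by parts against $W$ approximated by smooth paths, or directly via Helly-type arguments.
\item $L(b,\beta)$ restricted to $[0,T]$ is separable in the uniform topology. This holds by Arzel\`a--Ascoli after reducing $\chi(0)$ modulo $2\pi$.
\end{enumerate}
With these, the supremum over the uncountable index set equals a supremum over a countable dense subset, and measurability follows.
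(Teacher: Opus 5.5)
Your proof is correct and matches the paper's. For measurability, both reduce to a countable dense family: the paper gets continuity in $\chi$ from the $W^{1,1}$-topology on $L_T(b,\beta)$, while you use uniform convergence plus the uniform total-variation bound (a Helly--Bray argument), which works equally well. For (i) and (ii), both use the deterministic bound $Q_{b,\beta,T}\le 2(1+T(|b|+\beta))\|W|_{[0,T]}\|_\infty$ together with boundedness of Brownian paths and the support theorem.

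One small slip: if $\beta=0$ and $b\notin\mathbb Q$, there are no rational slopes in $[b-\beta,b+\beta]$. Use slopes in $b+(\mathbb Q\cap[-\beta,\beta])$ instead, or simply note that every subset of the separable space $C([0,T])$ is separable, so Arzel\`a--Ascoli is not needed.
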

\begin{proof}
    Fix  $b \in \mathbb R$ and $\beta, T\geq 0$. Note that the space $L(b,\beta)$ in \eqref{eq:QDef2} may be replaced by
    $$L_T(b,\beta) := \{\chi: [0,T] \to \mathbb R : (t \mapsto \chi(t)-bt) \in \operatorname{Lip}(\beta)\}.$$
    For every $0\leq s \leq t\leq T$, and every $\chi \in L_T(b,\beta)$, we have 
    \begin{align*}
        &\quad\,\left|\mathcal R(\chi(t))W(t)-\mathcal R(\chi(s))W(s) -\int_{s}^{t} \dd \mathcal R(\chi(u))\, W(u)\right|\\
        &=\left|\mathcal R(\chi(t))(W(t)-W(s))+(\mathcal R(\chi(t))-\mathcal R(\chi(s)))W(s) -\int_{s}^{t} \dd \mathcal R(\chi(u))\,W(u) \right|\\
        &=\left|\mathcal R(\chi(t))(W(t)-W(s))-\int_{s}^{t} \dd \mathcal R(\chi(u))\, (W(u)-W(s))\right|.
    \end{align*}
    In particular, the term is continuous in $\chi$ once $L_T(b,\beta)$ is equipped with the $W^{1,1}$-topology. Since $L_T(b,\beta)$ is separable with respect to that topology, the uncountable supremum in \eqref{eq:QDef2} may be replaced by the supremum over a countable dense subset, giving measurability of $Q_{b, \beta, T}$.

    To show (i) and (ii), note that each function $\chi \in L(b, \beta)$ is $(|b|+\beta)$-Lipschitz continuous. This allows us to estimate
    \begin{align*}
        &\quad\,\left|\mathcal R(\chi(t))(W(t)-W(s))-\int_{s}^{t} \dd \mathcal R(\chi(u))\, (W(u)-W(s))\right|\\
        &\leq (1+(t-s)(|b|+\beta)) \sup_{s\leq u \leq t} |W(u)-W(s)|\\
        &\leq 2\,(1+T(|b|+\beta)) \left\|W|_{[0,T]}\right\|_\infty.
    \end{align*}
    In particular, we get
    $$Q_{b, \beta, T} \leq 2\,(1+T(|b|+\beta)) \left\|W|_{[0,T]}\right\|_\infty.$$
    Now, (ii) follows from the fact that Brownian paths are bounded almost surely on compact intervals and (i) from the support theorem for Brownian motion.
\end{proof}

The following lemma connects $Q_{b, \beta, T}$ to the stopping times $\tau^\uparrow_n$ and $\tau^\downarrow_n$ appearing in Lemma \ref{lemm:ballcrit}.

\begin{lemma}\label{lemm:contr2}
    Assume the setting of Lemma \ref{lemm:ballcrit}. Let $q_1, q_2, \dots$ be a sequence of positive real numbers satisfying 
    \begin{align}\label{eq:RQCond1}
       0\leq r_n-q_n < r_n + q_n &\leq r_{n+1}, ~\forall \, n \geq 1.
    \end{align}
    Define two sequences $b_1, b_2, \dots$ and $\beta_1, \beta_2, \dots$ by
    \begin{align*}
         b_n &:= \rho(r_n-q_n)&&
        \text{and}&\beta_n &:= 2q_n \sup_{r \in [r_n-3q_n, r_n + q_n]\cap [0,\infty)} |\rho'(r)|.
    \end{align*}
    \begin{enumerate}
        \item[(i)] For each $n\geq 1$ and each $t>0$, if $Q_{b_n,\beta_n, t}< q_n$ then $\tau^\uparrow_n > t.$ 
        \item[(ii)] Assume furthermore that
        \begin{align}\label{eq:RQCond2}
          0< r_n-3q_n <r_n-2q_n &\leq r_{n-1}, ~\forall \, n \geq 2,
    \end{align}
    and let the sequence $(T_n)$ be given by
   $$T_n = \frac{3q_n}{r_n-3q_n}.$$
        For each $n \geq 2$, if $Q_{b_n,\beta_n, T_n}< q_n$ then $\tau_n^\downarrow<T_n$, and by (i) even $\tau_n^\downarrow<\tau_n^\uparrow \wedge T_n$.
    \end{enumerate}
\end{lemma}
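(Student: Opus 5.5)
The plan is to combine the contraction estimate of Lemma \ref{lemm:contraction} with the observation that, as long as a trajectory stays in a certain annulus, its rotation angle $\chi_X$ from \eqref{eq:chidef} belongs to $L(b_n,\beta_n)$. A bound on $Q_{b_n,\beta_n,t}$ then controls every trajectory started in $C_n$ simultaneously.

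\emph{Part (i).} Fix $\omega$ with $Q_{b_n,\beta_n,t}(\omega)<q_n$ and $x\in C_n$, and let $X(s)=\varphi(s,\omega,x)$. Define $\sigma$ as the first time $|X|$ reaches $r_n+q_n$. Two cases arise.
\begin{itemize}
\item If $|x|\le r_n-q_n$, then a trajectory that reaches $r_n+q_n$ must first cross the annulus $[r_n-q_n,r_n+q_n]$. In that case I would restart at the last time $s_0$ at which $|X|=r_n-q_n$, which suffices because $\varphi(s_0,\omega,x)$ has modulus at most $r_n$.
\item If $|x|\in[r_n-q_n,r_n]$, one works directly on $[0,\sigma\wedge t]$.
\end{itemize}
A cleaner route is to argue directly. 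On any interval $[s_0,s_1]$ on which $|X|\in[r_n-q_n,r_n+q_n]$, we have
\[
\dot\chi_X-b_n=\rho(|X|)-\rho(r_n-q_n),
\]
and this is bounded by $2q_n\sup|\rho'|\le\beta_n$. Hence, after shifting time and extending $\chi_X$ linearly with slope $b_n$ outside the interval, it lies in $L(b_n,\beta_n)$.

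Applying Lemma \ref{lemm:contraction} to the restarted trajectory from $s_0$ gives, for $s\in[s_0,s_1]$,
\[
|X(s)|\le e^{-(s-s_0)}|X(s_0)|+\sup_{u}\Bigl|\int_u^{s}\mathcal R(\chi_X)\,\dd W\Bigr|< r_n+q_n .
\]
Here the integral is identified with the pathwise expression \eqref{eq:QDef2}, which is a.s.\ consistent with the Itô integral for the adapted $\chi_X$. This is a technical point; the cocycle property together with the shift handles the time offset. It follows that $|X|$ never reaches $r_n+q_n\le r_{n+1}$ before time $t$. Since this holds for every $x\in C_n$, compactness and continuity give $\varphi(s,\omega,C_n)\subseteq B_{n+1}$ for $s\le t$, that is, $\tau_n^\uparrow>t$.

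\emph{Part (ii).} The goal is to show that every trajectory started in $C_n$ lies in $C_{n-1}$ at time $T_n$; it then suffices to know $\varphi(T_n,\omega,C_n)\subseteq B_0(r_n-2q_n)\subseteq C_{n-1}$, which gives $\tau^\downarrow_n\le T_n$, with strict inequality by continuity and openness. By part (i) no trajectory exits $B_0(r_n+q_n)$ before $T_n$. As long as $|X|\ge r_n-3q_n$, the rotation angle is in $L(b_n,\beta_n)$, since the oscillation of $\rho$ on $[r_n-3q_n,r_n+q_n]$ is bounded by $4q_n\sup|\rho'|$. This is twice the earlier bound, so I would relabel the base point as the annulus midpoint and possibly adjust constants, matching the definition of $\beta_n$ through $|\rho(r)-\rho(r_n-q_n)|\le 2q_n\sup|\rho'|$ on that interval. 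Then Lemma \ref{lemm:contraction} gives
\[
|X(T_n)|\le e^{-T_n}r_n+q_n .
\]
We have $e^{-T_n}\le 1/(1+T_n)=(r_n-3q_n)/r_n$, so $e^{-T_n}r_n\le r_n-3q_n$ and $|X(T_n)|<r_n-2q_n$.

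If instead the trajectory dips below $r_n-3q_n$ at some time, restart at the last such time $s_0\le T_n$. The same estimate on $[s_0,T_n]$ yields $|X(T_n)|\le r_n-3q_n+q_n=r_n-2q_n$, with strict inequality since $Q_{b_n,\beta_n,T_n}<q_n$.

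The main obstacle is twofold. First, one must justify rigorously that the adapted Itô integral in Lemma \ref{lemm:contraction} agrees with the pathwise integral \eqref{eq:QDef2} simultaneously for all initial conditions, so that the bound by $Q$ is uniform in $x$. Second, the restarting argument requires $\chi_X$ to be in $L(b_n,\beta_n)$ only on the sub-interval where $|X|$ stays in the annulus. For the latter I would apply the integration-by-parts form of Lemma \ref{lemm:contraction} pathwise on that sub-interval.
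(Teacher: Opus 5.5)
Your proposal is correct and follows the paper's proof essentially step by step: the mean-value bound $|\rho(r)-\rho(r_n-q_n)|\le\beta_n$ on $[r_n-3q_n,r_n+q_n]$, restarting Lemma \ref{lemm:contraction} at a last crossing time so that $\chi_X$ agrees with an element of $L(b_n,\beta_n)$ on the relevant interval, and the two cases in (ii) using $e^{-T_n}\le (1+T_n)^{-1}=(r_n-3q_n)/r_n$. The differences are cosmetic: the paper restarts (i) at the last time the trajectory is in $C_n$ (level $r_n$) rather than at level $r_n-q_n$, and no relabelling of the base point is needed in (ii), since $r_n-q_n$ is already the midpoint of $[r_n-3q_n,r_n+q_n]$.
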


\begin{figure}
    \centering
    \begin{overpic}[width = 0.4 \textwidth, tics = 10]{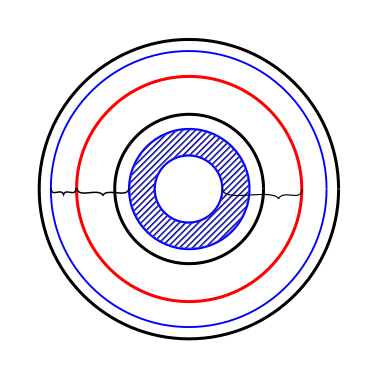}
    \put(15,43) {$q_n$}
    \put(22,43) {$2q_n$}
    \put(69,42) {$3q_n$}
    \end{overpic}
    \caption{Illustration of the relevant balls in the proof of Lemma \ref{lemm:contr2}. The red circle represents $\partial B_n$, the inner black circle $\partial B_{n-1}$ and the outer black circle $\partial B_{n+1}$. The three blue circles represent $\partial B_{0}(r_n-3q_n)$, $\partial B_{0}(r_n-2q_n)$ and $\partial B_{0}(r_n+q_n)$. The ``buffer zone" (shaded region) between the two innermost blue circles is crucial for the proof of Lemma \ref{lemm:contr2} (ii). It allows to control the trajectories of points coming from a region close to the origin before they enter the problematic region $\mathbb R^2 \setminus C_{n-1}$.}
    \label{fig:buffer}
\end{figure}

\begin{proof}
First, note that by the mean value theorem we have
\begin{equation}\label{eq:rhoRange}
    {\rho(r) \in [b_n-\beta_n, b_n+\beta_n]}, \quad \forall\, r \in [r_n-3q_n, r_n +q_n]\cap [0,\infty).
\end{equation}

(i): Let $n\geq 1$, $T\geq 0$ and $\omega\in \Omega$ be such that $Q_{b_n, \beta_n, T}< q_n$. Fix an arbitrary $x\in C_n$ and let $X(t) = \varphi(t,\omega,x)$.
Set
$$t^* := T \wedge \inf \{t \in [0,T] : |X(t)|\geq r_n+q_n\}.$$
Our goal is to show that $|X(t^*)| < r_n+q_n$ which implies $t^* = T$ and thus $X(t) \in B_{n+1}$ for all $t \in [0,T]$. 

If $X(t^*) \in C_n$, clearly we are done, so suppose $X(t^*) \notin C_n$. Let $t_0\in [0, t^*)$ be the last time the trajectory leaves $C_n$, that is,
$$t_0 \coloneqq \sup \{t \in [0,t^*] : X(t) \in C_n\}.$$
Set $x_0 \coloneqq X(t_0).$ By the continuity of the trajectory $X$, we have $|x_0|=r_n$. Lemma \ref{lemm:contraction} gives
\begin{align*}
    |X(t^*)| &= |\varphi(t^*-t_0, \theta_{t_0}\omega, x_0)| \leq e^{-(t^*-t_0)}\,|x_0| +\sup_{s \in [0,t^*-t_0]} \left|\int_{s}^{t^*-t_0} \mathcal R(\chi_X(t_0+u)) \,\dd W(t_0+u)\right|\\
    &\leq r_n + \sup_{s\in [t_0, t^*]} \left|\int_{s}^{t^*} \mathcal R(\chi_X(u)) \,\dd W(u)\right|.
\end{align*}
Since $|X(t)| \in [r_n, r_n+q_n]$ for all $t \in [t_0, t^*]$, we have $\chi_X'(t) \in [b_n-\beta_n, b_n+\beta_n]$ for all $t\in [t_0, t^*]$. In particular, we can find a function $\tilde \chi \in L(b_n,\beta_n)$ that agrees with $\chi_X$ on  $[t_0, t^*]$. We get
\begin{align*}
    |X(t^*)| &\leq r_n + \sup_{s\in [t_0, t^*]} \left|\int_{s}^{t^*} \mathcal R(\tilde \chi(u)) \,\dd W(u)\right|\leq r_n +Q_{b_n,\beta_n, T} < r_n +q_n.
\end{align*}

(ii): Let $n\geq 2$ and $\omega\in \Omega$ be such that $Q_{b_n, \beta_n, T_n}< q_n$. Again, fix an arbitrary $x \in C_n$. Our goal is to show $\varphi(T_n, \omega, x) \in C_{n-1}$. Showing this for all $x \in C_n$ gives $\varphi(T_n, \omega, C_n) \subseteq C_{n-1}$. Since (i) additionally gives $\varphi(t, \omega, C_n) \subseteq C_{n+1}$ for all $t \in [0, T_n]$ and in particular $\tau^\star(\omega, C_n) > T_n$, this allows us to conclude the desired bound on $\tau^\downarrow_n(\omega)$.

Again, let $X(t) = \varphi(t,\omega,x)$. By the proof of (i), we have $|X(t)| < r_n+q_n$ for all $t\in [0,T_n]$. We distinguish two cases. First, suppose that $X(t)$ leaves the region where $\rho$ is controlled, that is, $|\varphi(t, \omega, x)| < r_n- 3 q_n$, for some $t \in [0,T_n]$. Let $t_0 \in [0,T_n]$ be the largest such $t$, that is,
$$t_0 := \sup \{t \in [0,T_n] :|X(t)| < r_n- 3 q_n\}.$$
If $t_0 = T_n$, we are done since $r_n-3q_n < r_{n-1}$, so suppose $t_0 < T_n$. Set $x_0 \coloneqq X(t_0)$ and note that, by continuity, $|x_0|= r_n-3q_n$. Also, $$|X(t)| \in [r_n-3q_n, r_n +q_n], \quad \forall \, t \in [t_0, T_n].$$ 
Arguing analogously to (i), we can show
$$|X(T_n)| \leq |X(t_0)| + Q_{b_n, \beta_n, T_n} < r_n -3q_n+q_n \leq r_{n-1}.$$

It remains to treat the case where $X(t)$ does not leave the controlled region, that is, $|\varphi(t, \omega, x)| \in [r_n-3q_n, r_n +q_n]$ for all $t \in [0, T_n]$. Thus $\chi_X'(t) \in [b_n-\beta_n, b_n+\beta_n]$ for all $t \in [0, T_n]$ and there exists a $\tilde \chi \in L(b_n, \beta_n)$ that agrees with $\chi_X$ on $[0, T_n]$. Applying Lemma \ref{lemm:contraction} and using $e^{-T_n} \leq (1+T_n)^{-1}$, we get
\begin{align*}
    |X(T_n)| &\leq e^{-T_n}|X(0)| + \sup_{s \in [0,T_n]} \left|\int_{s}^{T_n} \mathcal R(\tilde \chi(u)) \,\dd W(u)\right|\\
    &\leq \frac{1}{1+T_n} r_n + Q_{b_n, \beta_n, T_n} < r_n-3q_n + q_n \leq r_{n-1}.
\end{align*}
\end{proof}

\subsection{Proof of Theorem \ref{theo:attractors2}}
The rest of the argument is based purely on scaling symmetries. The rotational and parabolic scaling symmetries of Brownian motion translate to scaling symmetries of $Q$. Denoting equality in distribution by ``$\eqdist$'', we obtain the following result.
\begin{lemma}\label{lemm:QScaling}
 For each $b \in \mathbb R$, $\beta \geq 0$ and $T \geq 0$, we have
\begin{enumerate}
    \item[i)] $Q_{b,\beta, T} \eqdist Q_{0,\beta, T}$ and 
    \item[ii)] $Q_{0, \kappa\beta, \kappa^{-1}T} \eqdist \kappa^{-1/2}\,Q_{0,\beta, T},$ for each $\kappa >0$.
\end{enumerate}
\end{lemma}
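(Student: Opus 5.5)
Both identities come from transformations of the driving Brownian motion that preserve Wiener measure and map the index set of the supremum in \eqref{eq:QDef2} bijectively onto the corresponding index set. Throughout I work with the pathwise representation from the proof of Lemma \ref{lemm:youngloeve}:
$$\mathcal R(\chi(t))(W(t)-W(s))-\int_s^t \dd\mathcal R(\chi(u))\,(W(u)-W(s)).$$
This is a deterministic functional $\Phi(W,\chi,s,t)$ of the path $W|_{[0,T]}$, and $Q_{b,\beta,T}=\sup\,|\Phi(W,\chi,s,t)|$ over $0\le s<t\le T$ and $\chi\in L_T(b,\beta)$. It therefore suffices to exhibit a measure-preserving map $W\mapsto\widetilde W$ together with a reparametrization of the index set under which $\Phi$ transforms correctly, pathwise and for every $\chi$ simultaneously.

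For i), I use a deterministic time-dependent rotation. Set $\widetilde W(t):=\int_0^t \mathcal R(bu)\,\dd W(u)$. Since the integrand is deterministic and orthogonal, $\widetilde W$ is again a standard planar Brownian motion by Lévy's characterization; pathwise one may take $\widetilde W(t)=\mathcal R(bt)W(t)-\int_0^t \dd\mathcal R(bu)\,W(u)$. The map $\chi\mapsto\tilde\chi:=\chi-b\,\cdot$ is a bijection $L(b,\beta)\to L(0,\beta)$, and $\mathcal R(\chi(u))=\mathcal R(\tilde\chi(u))\mathcal R(bu)$. Formally this gives
$$\int_s^t\mathcal R(\chi(u))\,\dd W(u)=\int_s^t\mathcal R(\tilde\chi(u))\,\dd\widetilde W(u).$$
I need this identity to hold pathwise, with the integrals understood as in \eqref{eq:QDef2}. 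This requires an associativity property of Riemann--Stieltjes integrals against finite-variation integrands, which I would verify by integration by parts, using that $u\mapsto\mathcal R(bu)$ is smooth. Taking suprema then gives $Q_{b,\beta,T}(W)=Q_{0,\beta,T}(\widetilde W)$, and since $\widetilde W\eqdist W$ the first claim follows.

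For ii), I use Brownian scaling. Set $\widetilde W(u):=\kappa^{1/2}W(\kappa^{-1}u)$, which is again a Brownian motion. Given $\chi\in L(0,\beta)$, define $\hat\chi(u):=\chi(\kappa^{-1}u)$; since $\hat\chi$ is $\kappa^{-1}\beta$-Lipschitz, the map $\chi\mapsto\hat\chi$ is a bijection $L(0,\beta)\to L(0,\kappa^{-1}\beta)$. Conversely, elements of $L(0,\kappa\beta)$ correspond to functions $u\mapsto\chi(\kappa u)$ with $\chi\in L(0,\beta)$. Substituting $u=\kappa v$ in $\Phi$ shows
$$\Phi(W,\chi(\kappa\,\cdot),s,t)=\kappa^{-1/2}\,\Phi(\widetilde W',\chi,\kappa s,\kappa t),\qquad \widetilde W'(v):=\kappa^{1/2}W(\kappa^{-1}v).$$
As $(s,t)$ ranges over $[0,\kappa^{-1}T]$, the pair $(\kappa s,\kappa t)$ ranges over $[0,T]$. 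Taking suprema yields $Q_{0,\kappa\beta,\kappa^{-1}T}(W)=\kappa^{-1/2}Q_{0,\beta,T}(\widetilde W')$, and $\widetilde W'\eqdist W$ gives the second claim.

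The main obstacle is making part i) rigorous pathwise. Because the supremum runs over uncountably many $\chi$, the Itô isometry or Lévy argument cannot be applied for each $\chi$ separately. I would resolve this by proving the identity on a full-measure set on which $\widetilde W$ is given by the integration-by-parts formula. On that set, the Stieltjes identity holds for all $\chi$ at once, since everything is defined via integration by parts against continuous paths with finite-variation integrands. Alternatively, I could use the countable dense subset from Lemma \ref{lemm:youngloeve} together with continuity of $\Phi$ in $\chi$. In that approach each $\chi$ in the countable set needs only an almost-sure identity, and countably many null sets can be discarded.
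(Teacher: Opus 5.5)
Your proposal is correct and follows the paper's argument. For i) you use the bijection $\chi\mapsto\chi-b\,\cdot$ with the rotated Brownian motion $\widetilde W=\int_0^\cdot\mathcal R(bu)\,\dd W(u)$, and for ii) the bijection $\chi\mapsto\chi(\kappa\,\cdot)$ with Brownian scaling, transporting the suprema through the pathwise integration-by-parts definition; your explicit check that $\int\mathcal R(\tilde\chi)\,\dd\widetilde W=\int\mathcal R(\tilde\chi)\mathcal R(b\,\cdot)\,\dd W$ holds for every continuous path at once fills in a step the paper only asserts.
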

\begin{proof}
i): For $\chi \in L(b,\beta)$ let $\tilde \chi \in L(0,\beta)$ be defined by $\tilde\chi(t) = \chi(t)-bt$. Clearly, the assignment $\chi \mapsto \tilde \chi$ is a bijection between $L(b, \beta)$ and $L(0,\beta)$. Also, define a stochastic process $\widetilde W: [0, T] \to \mathbb R^2$ by 
$$\widetilde W(t) = \int_0^t\mathcal R(bs) \,\dd W(s).$$
By rotation invariance $\widetilde W$ is also a Brownian motion, so in particular $W \eqdist \widetilde W$.
In the following, it is convenient to use the expression \eqref{eq:QDef} for $ Q_{b,\beta,T}$. For this, the integral needs to be interpreted as a pathwise integral defined via the integration-by-parts representation \eqref{eq:QDef2}.
Now we have
\begin{align*}
    Q_{b,\beta,T} &= \sup_{0\leq s<t\leq T,\,\chi \in L(b,\beta)} \left|\int_{s}^{t} \mathcal R(\chi(u))\,\dd W(u)\right|\\
    &= \sup_{0\leq s<t\leq T,\,\chi \in L(b,\beta)} \left|\int_{s}^{t} \mathcal R(\tilde\chi(u))\mathcal R(bu) \,\dd W(u)\right|\\
    &= \sup_{0\leq s<t\leq T,\,\tilde \chi \in L(0,\beta)} \left|\int_{s}^{t} \mathcal R(\tilde \chi(u))\,\dd \widetilde W(u)\right|\\
    &\eqdist \sup_{0\leq s<t\leq T,\,\tilde \chi \in L(0,\beta)} \left|\int_{s}^{t} \mathcal R(\tilde \chi(u))\,\dd W(u)\right|=Q_{0, \beta, T}.
\end{align*}
ii): Fix $\kappa >0$. For each $\chi \in  L(0,\beta)$ define $\hat \chi \in L(0,\kappa \beta)$ by $\hat \chi(t) = \chi(\kappa t)$. Also define a process $\widehat W$ by $\widehat W(t) = \kappa^{-1/2} \,W(\kappa t)$. Due to the parabolic scaling $\widehat W$ is also a Brownian motion and thus $W \eqdist \widehat W$. Now we have
\begin{align*}
    \kappa^{-1/2} \,Q_{0,\beta, T} &= \sup_{0\leq s<t\leq T,\,\chi \in L(0,\beta)} \left|\int_s^t \mathcal R(\chi(u))\kappa^{-1/2}\, \dd W(u)\right|\\
    &= \sup_{0\leq s<t\leq T,\chi \in L(0,\beta)} \left|\int_{\kappa^{-1}s}^{\kappa^{-1}t} \mathcal R(\chi(\kappa u))\kappa^{-1/2}\,\dd W(\kappa u)\right|\\
    &= \sup_{0\leq \kappa^{-1}s<\kappa^{-1}t\leq\kappa^{-1}T,\,\hat\chi \in L(0,\kappa\beta)} \left|\int_{\kappa^{-1}s}^{\kappa^{-1}t} \mathcal R(\hat\chi(u))\, \dd \widehat W(u)\right| \eqdist Q_{0,\kappa \beta, \kappa^{-1}T}.
\end{align*}
\end{proof}
We are now in a position to prove Theorem \ref{theo:attractors2}.

\begin{proof}[Proof of Theorem \ref{theo:attractors2} (a)]
Let $\hat \rho(r) := 1\vee \sup_{r'\in [0,r]}|\rho'(r')|$. Suppose \eqref{eq:Compl} holds, that is,
$$\int_0^\infty \hat \rho(r)^{-1/3}\, \dd r = \infty.$$
We aim to show strong completeness by invoking Lemma \ref{lemm:ballcrit} (i).
Set $r_1 := 1$ and recursively define $r_{n+1}$ in such a way that $r_{n+1}-r_n = \hat \rho(r_{n+1})^{-1/3}$ holds. This is always possible and the choice is unique since $\hat \rho(r)^{-1/3}$ is monotonically decreasing and continuous. Also $r_n \to \infty$ since otherwise $r_{\infty} \coloneqq \lim_{n \to \infty} r_n<\infty$ would satisfy $\hat \rho(r_{\infty})^{-1/3} = r_{\infty}-r_{\infty} = 0$, leading to a contradiction.

Set
\begin{align*}
    q_n &:= \hat \rho(r_{n+1})^{-1/3},&\beta_n^* &:= 2\hat \rho(r_{n+1})^{2/3}&&\text{and}&t_n &:=\hat \rho(r_{n+1})^{-2/3}.
\end{align*}
 Also let $(b_n)$ and $(\beta_n)$ be the sequences defined in Lemma \ref{lemm:contr2}. By the definition of $\hat \rho$, we have $$\beta_n \leq 2q_n \,\hat \rho(r_n + q_n) = 2q_n \,\hat \rho(r_{n+1}) = 2\hat \rho(r_{n+1})^{2/3} = \beta^*_n.$$
Using Lemma \ref{lemm:contr2} (i) and the monotonicity of $Q_{b,\beta, t}$ with respect to $\beta$, for each $n\geq 1$, we get
$$\mathbb P(\tau_n^{\uparrow}\geq t_n)\geq \mathbb P (Q_{b_n,\beta_n, t_n}<q_n)\geq \mathbb P (Q_{b_n,\beta_n^*, t_n}<q_n).$$
Using Lemma \ref{lemm:QScaling} and Lemma \ref{lemm:youngloeve}, we furthermore get
$$\mathbb P(\tau_n^{\uparrow}\geq t_n)\geq \mathbb P (Q_{b_n,\beta_n^*, t_n}<q_n) = \mathbb P (Q_{b_1,\beta_1^*, t_1}<q_1)>0$$
and thus in particular $\liminf_{n \to \infty}\mathbb P(\tau_n^{\uparrow}\geq t_n)>0$. It remains to show $\sum_{n=1}^\infty t_n = \infty$.
Using the fact that $r_{n+1}-r_n = \hat \rho(r_{n+1})^{-1/3} \leq 1$ and the monotonicity of $\hat \rho$, we can compute
\begin{align*}
    \sum_{n=1}^\infty t_n &=  \sum_{n=1}^\infty \hat \rho(r_{n+1})^{-2/3} = \sum_{n=1}^\infty (r_{n+1}-r_n)\,\hat \rho(r_{n+1})^{-1/3} \\
    &\geq \sum_{n=1}^\infty (r_{n+1}-r_n)\,\hat \rho(r_{n}+1)^{-1/3} \geq \int_1^\infty \hat \rho(r+1)^{-1/3}\, \dd r = \infty.
\end{align*}
\end{proof}

\begin{proof}[Proof of Theorem \ref{theo:attractors2} (b)]
    Suppose $\rho$ satisfies \eqref{eq:Attr}, that is, there is a radius $r^*\geq 0$, such that 
    \begin{equation*}\label{ineq:rhoprimebound}
        |\rho' (r)| \leq C r^3, ~\forall \, r \geq r^*.
    \end{equation*}
    for some $0<C\leq K_1$ which is less than some universal constant $K_1>0$ to be determined. For a different constant $K>0$, also to be determined later, define sequences $(r_n)$, $(q_n)$, $(\beta_n^*)$ and $(t_n)$ by
    \begin{align*}
        r_n &:= K n^{1/2},& q_n &:= \frac{3}{8}Kn^{-1/2},& \beta_n^*&:= 3CK^4n,& t_n&:= 3n^{-1}.
    \end{align*}
    Furthermore, let $(b_n)$, $(\beta_n)$, and $(T_n)$ be sequences as defined in Lemma \ref{lemm:contr2}. The conditions \eqref{eq:RQCond1} and \eqref{eq:RQCond2} can be verified by the following elementary calculations:
\begin{align*}
     r_{n+1}-r_n &= K (\sqrt{n+1}-\sqrt{n}) = \frac{K}{\sqrt{n+1}+\sqrt{n}}\geq \frac{K}{(1+\sqrt{2})\sqrt{n}}> q_n, &&\forall n \geq 1,\\
    r_n-r_{n-1} &= K (\sqrt{n}-\sqrt{n-1}) = \frac{K}{\sqrt{n}+\sqrt{n-1}} \leq \frac{K}{(1+\sqrt{1/2})\sqrt{n}} \leq 2q_n,& &\forall \, n \geq 2,\\
    r_n-3q_n &= K\left(\sqrt{n}-\frac{9}{8\sqrt{n}}\right) \geq K\left(\sqrt{2}-\frac{9}{8\sqrt{2}}\right)>0,& &\forall \, n \geq 2.
\end{align*}
Let $n^* \in \mathbb N$ denote the smallest integer $n^*\geq 2$ satisfying $r_{n^*}-3q_{n^*}\geq r^*$. For all $n\geq n^*$, we have
\begin{align*}
    \beta_n &= 2q_n \sup_{r\in[r_n-3q_n, r_n+q_n]}|\rho'(r)| \leq 2 q_n C(r_n+q_n)^3 \leq 2 q_n Cr_{n+1}^3 \\
    &= \frac{3}{4}CK^4 n^{-1/2}(n+1)^{3/2}\leq \frac{3\cdot 2^{3/2}}{4}CK^4 n < 3CK^4 n = \beta_{n}^*, \text{ and}\\
    T_n &= \frac{3q_n}{r_n-3q_n} = \frac{\tfrac{9K}{8}n^{-1/2}}{Kn^{1/2}- \tfrac{9K}{8} n^{-1/2}} = \frac{1}{\tfrac{8}{9}n-1} < \frac{3}{n} = t_n.
\end{align*}
Note that the dependence on $n$ in $\beta_n$, $q_n$ and $t_n$ matches the scaling in Lemma \ref{lemm:QScaling}. We can use this, together with Lemma \ref{lemm:contr2} and the monotone dependence of $Q$ on $\beta$ and $t$, to get the bound

    \begin{align*}
        \mathbb P\left(\tau^\downarrow_n < \tau^\uparrow_n \wedge t_n \right) &\geq \mathbb P\left(\tau^\downarrow_n < \tau^\uparrow_n \wedge T_n \right) \geq   \mathbb P(Q_{b_n, \beta_n, T_n} < q_n)\\
        &\geq \mathbb P(Q_{b_n, \beta_n^*, t_n} < q_n)= \mathbb P\left(Q_{0, \beta_1^*, t_1} < q_1\right) 
    \end{align*}
    for all $n \geq n^*$. In particular, we have
    $$\liminf_{n \to \infty}\mathbb P\left(\tau^\downarrow_n < \tau^\uparrow_n \wedge t_n \right) \geq \mathbb P\left(Q_{0, \beta_1^*, t_1} < q_1\right),$$
    so in order to establish \eqref{ineq:ballcritattrcond}, it remains to show $\mathbb P\left(Q_{0, \beta_1^*, t_1} < q_1\right)>\frac{1}{2}$.
    Recall that the constant $K>0$ was arbitrary so far and fix $K=C^{-1/4}$, such that $\beta^*_1 = 3$ and $t_1 = 3$ no longer depend on $C$. As a consequence of Lemma \ref{lemm:youngloeve} (ii), there exists a constant $Q^*> 0$ such that $\mathbb P(Q_{0, 3,3} < Q^*) > \frac{1}{2}$. Set $K_1 = \frac{1}{51}{Q^*}^{-4}$. Then 
    $$q_1 = \frac{3}{8}K = \frac{3}{8}C^{-1/4} \geq \frac{3}{8}K_1^{-1/4} = \frac{3}{8} 51^{1/4} Q^*>Q^*.$$
     and thus indeed $\mathbb P\left(Q_{0, \beta_1^*, t_1} < q_1\right)>\frac{1}{2}$. Therefore, \eqref{ineq:ballcritattrcond} holds and, by Lemma \ref{lemm:ballcrit}, the random dynamical system $\varphi$ admits a weak set attractor.

\end{proof}


\section{Lack of a random attractor and lack of strong completeness (Theorem~\ref{theo:BlowUp})}\label{sec:BlowUp}
In this section, we state the full version of Theorem \ref{theo:BlowUp} and present its proof.
 For a subset $\mathcal M\subseteq \mathbb R^2$, we call the supremum of the absolute values of points in $\mathcal M$ the \emph{radius of $\mathcal M$}, denoted by
$$\operatorname{rad}(\mathcal M) = \sup_{x \in \mathcal M} |x| \in [0,
\infty].$$

\begin{maintheorem}[Full Version]\label{theo:BlowUp}
The following hold:
\begin{enumerate} 
\item[(a)] There exists a constant $K_2>0$, independent of $\rho$, with the property that if the function $\rho$, for some $r^*>0$,  satisfies
\begin{equation}
 |\rho'(r)|\geq K_2 \,r^3 \quad \forall \, r\geq r^*,
\tag{NoAtt} \label{eq:NoAttr2}
\end{equation}
as well as
\begin{equation}\label{eq:BasicAssumption2}
    \limsup_{r \to \infty}\frac{|\rho''(r)|}{|\rho'(r)|}<\infty, \quad \text{and}\quad |\rho'(r)| \text{ increases monotonically on }[r^*,\infty),
\end{equation}
then the following hold.

\begin{enumerate}
    \item[(a.i)] The RDS $\varphi$ does not admit a weak set attractor.

    \item[(a.ii)] In particular, for every compact, connected set $\mathcal M\subseteq \mathbb R^2$ that contains at least two points, there is a positive probability that the radius of its image under the flow grows at least exponentially, that is, 
$$\mathbb P\left(\tau^\star(\omega,\mathcal M) <\infty \text{ or } \liminf_{t \to \infty} \frac{1}{t} \log (\operatorname{rad}(\varphi(t,\omega,\mathcal M)))>0\right) >0.$$

    \item[(a.iii)] Moreover, for every $\varepsilon >0$, there exists a radius $R$ such that if $\mathcal M$ contains some point $x$ with $|x|\leq 1$ and satisfies $\operatorname{rad}(\mathcal M) \geq R$, then this probability is at least $1-\varepsilon$.
\end{enumerate}

\item[(b)]If, additionally, 
\begin{equation}
\int_{r^*}^\infty \big|\rho'(r)\big|^{-1/3}\,\dd r<\infty\tag{NoCom} \label{eq:NoCompl2},
\end{equation}
then the following hold.

\begin{enumerate}
    \item[(b.i)] The RDS $\varphi$ is not strongly complete.
    \item[(b.ii)]In particular, for every compact, connected set $\mathcal M\subseteq \mathbb R^2$ that contains at least two points, there is a positive probability that there exists a point $x \in \mathcal M$ and a time $T>0$, such that
    $$\lim_{t \nearrow T} |\varphi(t,\omega,x)| = \infty.$$ 
    \item[(b.iii)] Moreover, for every $\varepsilon >0$, there exists a radius $R$ such that if $\mathcal M$ contains some point $x$ with $|x|\leq 1$ and satisfies $\operatorname{rad}(\mathcal M) \geq R$, then with probability at least $1-\varepsilon$ this happens at time $T\leq\varepsilon$.

\end{enumerate}

\end{enumerate}
\end{maintheorem}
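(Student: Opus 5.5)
The plan is an iterative scheme that, at each stage, follows two points of the image of $\mathcal M$ together with the connected piece of the image joining them. Stage $n$ starts at a stopping time $\sigma_n$. At that time the image $\varphi(\sigma_n,\omega,\mathcal M)$ should contain a compact connected set $\mathcal M_n$ whose radii fill an interval $[r_n-\Delta_n,r_n]$. The scales are set by the heuristic in the introduction:
$$\Delta_n\asymp|\rho'(r_n)|^{-1/3},\qquad \delta T_n\asymp|\rho'(r_n)|^{-2/3}\lesssim K_2^{-2/3}r_n^{-2}.$$
Condition \eqref{eq:BasicAssumption2}, the bound on $|\rho''|/|\rho'|$ together with monotonicity, guarantees that $\rho'$ is comparable to $\rho'(r_n)$ on the annulus $[r_n-2\Delta_n,r_n+2\Delta_n]$. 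The reason is that $\Delta_n\to0$, so $\log|\rho'|$ varies by $O(\Delta_n)$ there. Consequently $\rho$ is, up to a small error, affine on this annulus, and the angular drift separates two radii at distance $\Delta$ at rate $\asymp|\rho'(r_n)|\,\Delta$.

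Following the random rotation idea of Lemma~\ref{lemm:contraction} and the scaling of Lemma~\ref{lemm:QScaling}, I would rescale time by $\delta T_n$ and space by $\Delta_n$. In these coordinates the dynamics in the annulus converge, as $r_n\to\infty$, to a normalized model $n$-independent: Brownian motion in the radial direction with unit-order linear shear. The dissipation $-X\,\dd t$ and the $1/R$ terms enter only with relative size $r_n\,\delta T_n/\Delta_n\lesssim K_2^{-1/3}$, plus errors vanishing with $r_n$. Choosing $K_2$ large therefore makes them negligible.

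The key single-step estimate I would aim for reads as follows. Take the connected arc $\mathcal M_n$ spanning radii $[r_n-\Delta_n,r_n]$. Then, with probability at least $1-\eta_n$, after time $\delta T_n$ the image of $\mathcal M_n$ contains a connected subset $\mathcal M_{n+1}$ spanning $[r_{n+1}-\Delta_{n+1},r_{n+1}]$, where $r_{n+1}\ge r_n+c\Delta_n$, and $\eta_n\to0$ summably. The mechanism is that shear winds the image of the arc around the origin several times during the first half of the time step. During the second half, the Brownian increment $\Delta W$ has some direction, and by connectedness and winding some point of the curve sits at an angle where $\langle x/|x|,\Delta W\rangle\approx|\Delta W|$. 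That point, and a small connected neighborhood of it, is pushed outwards.

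To make the failure probability small rather than merely bounded below, I would subdivide $\delta T_n$ into $k$ independent sub-blocks, each giving an independent chance of success, and use a large-deviation or comparison argument. Alternatively one can let $\Delta_n$ carry a slowly growing constant factor, so that $\eta_n$ becomes exponentially small in a parameter chosen depending on $K_2$. The strong Markov property from Section~\ref{sec:preliminaries} and Proposition~\ref{prop:tau}(iii) glue the steps together. The event that every step succeeds then has probability at least $\prod_n(1-\eta_n)>0$. The initial phase, in which a general compact connected $\mathcal M$ with two points is transported to a large annulus with the required geometry, has positive probability by a support-theorem argument as in Lemma~\ref{lemm:youngloeve}(i). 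Part (a.iii) follows because when $\operatorname{rad}(\mathcal M)\ge R$ and $\mathcal M$ meets the unit ball, the scheme can be started directly at a radius where $\prod_{n\ge n_0}(1-\eta_n)\ge1-\varepsilon$.

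Consequences. The total time is $\sum_n\delta T_n\asymp\sum_n\Delta_n|\rho'(r_n)|^{-1/3}\asymp\int|\rho'(r)|^{-1/3}\,\dd r$.
\begin{itemize}
\item Under \eqref{eq:NoCompl2} this total time is finite, and is smaller than $\varepsilon$ for a late start.
\item Each $\mathcal M_n$ is the image of a nested compact connected subset of $\mathcal M$. Pulling back along the flow, the intersection of these nested compacts is nonempty, and any point $x$ in it has $|\varphi(t,\omega,x)|\to\infty$ as $t\nearrow T$. This gives (b.ii) and (b.iii), and (b.i) follows.
\item Under \eqref{eq:NoAttr2} alone, $\Delta_n\lesssim r_n^{-1}$ and $\delta T_n\asymp\Delta_n^2$, so $\dd r/\dd t\gtrsim r$, which yields the exponential growth in (a.ii).
\end{itemize}
For (a.i), suppose a weak set attractor existed. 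Theorem~\ref{theo:existenceWeakSetAtt}, or rather the definition of a weak set attractor, gives a compact $C$ with $\mathbb P(\varphi(t,\omega,B)\subseteq C)\ge1/2$ for large $t$. Applied to a large ball $B$ containing a segment as in (a.iii), this contradicts the probability $1-\varepsilon$ of unbounded growth.

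The main obstacle is the single-step estimate, specifically its uniformity and the smallness of $\eta_n$. One must show rigorously that winding plus the Brownian kick lifts a connected sub-arc of the correct radial width, and not just a single point, while the pieces of the curve that fall inward are simply discarded. I would first prove it in the limiting normalized model, using continuity of the flow in the driving path, and then transfer it to finite $n$ by the $O(K_2^{-1/3}+r_n^{-1})$ perturbation bounds.
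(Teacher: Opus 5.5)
\textbf{The product step fails: the per-step failure probabilities are not summable.} Your scheme needs $\sum_n\eta_n<\infty$. But if each step is given time $\lesssim\delta T_n$ and aims for growth $c\Delta_n$, the conditional failure probability given $\mathcal F^+_{\sigma_n}$ is bounded below by a constant independent of $n$.

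To see this, note that success at stage $n$ requires some point of $\mathcal M_n\subseteq\bar B_0(r_n)$ to reach radius $r_n+c\Delta_n$ within time $\delta T_n$. The argument of Lemma~\ref{lemm:contr2}(i) shows this is impossible on the event $\{Q_{b,\beta,\delta T_n}\circ\theta_{\sigma_n}<c\Delta_n\}$, where $b=\rho(r_n-c\Delta_n)$ and $\beta\asymp c\Delta_n|\rho'(r_n)|$; here you use your own comparability of $\rho'$ on the annulus.

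By the strong Markov property, monotonicity of $Q$ in $\beta$, and Lemma~\ref{lemm:QScaling}, this event has conditional probability at least $\P\bigl(Q_{0,\bar\beta,1}<\bar q\bigr)$. The constants $\bar\beta,\bar q>0$ do not depend on $n$, because $\beta\,\delta T_n\asymp c$ and $c\Delta_n\,\delta T_n^{-1/2}\asymp c$. Lemma~\ref{lemm:youngloeve}(i) then makes this lower bound positive; this is exactly the mechanism behind Theorem~\ref{theo:attractors2}(a). Hence $\P(\text{all steps succeed})\le\lim_N(1-\eta)^N=0$.

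Your proposed fixes do not remove this:
\begin{itemize}
\item Subdividing $\delta T_n$ does not help, since the small-noise event kills all sub-blocks at once.
\item A parameter fixed in terms of $K_2$ makes $\eta$ small but not summable.
\item Inflating the scales by $L_n\to\infty$ (width $L_n\Delta_n$, time $L_n^2\delta T_n$) makes the relative dissipation equal to $L_n\,r_n|\rho'(r_n)|^{-1/3}$. For the critical growth $|\rho'(r)|\asymp K_2r^3$, which part (a) must cover, this is $\asymp L_nK_2^{-1/3}\to\infty$. That contradicts your claim that dissipation is negligible.
\end{itemize}

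\textbf{What is missing is a scheme that tolerates failed steps.} The paper proves only a per-step success probability $p^*>\tfrac12$ (Lemma~\ref{newapproach}), using a two-point estimate. The point on $\partial B_n$ reaches $\partial B_{n+1}$ before $\partial B_{n-1}$ with probability only slightly below $\tfrac12$. On the complementary event, the point on $\partial B_{n-1}$ overtakes it with probability bounded below. This works because the shear first rotates their angle difference to about $\pi/2$, so their radial noises decorrelate; uniformity in $g,h,\gamma$ comes from rough-path continuity of the It\^o--Lyons map.

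The shell index then dominates a random walk with drift $2p^*-1>0$. With probability tending to $1$ as the starting index grows, this walk stays above a linearly increasing barrier (Lemma~\ref{lemm:BlowUpCrit}). That yields rapid growth, and blow-up under (NoCom).

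This only works if, after a downward step, the image still straddles two consecutive shells. The paper guarantees this with an anchor point in $B_1$, whose one-point growth is controlled by Proposition~\ref{prop:anchorCond}, together with connectedness. Your plan of discarding the inward-falling parts of the curve throws away exactly this anchor. So even with the correct per-step estimate, you could not restart after a failure.

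Separately, for (a.ii) and (b.ii), getting from an arbitrary connected set with two points to one in $\mathcal I_{2n}$ needs the shear-based separation argument of Lemma~\ref{lemm:separation}. The small-ball bound of Lemma~\ref{lemm:youngloeve}(i) is not enough for that.
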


 The remainder of this section is divided into three parts. First, we introduce and prove a general criterion for non-existence of set attractors and lack of strong completeness, see Lemma \ref{lemm:BlowUpCrit} below. Then, we state and prove a key estimate required to apply Lemma \ref{lemm:BlowUpCrit} to our main SDE \eqref{SDE2}. Finally, we use this to give the proofs for Theorem \ref{theo:BlowUp} (a) and (b).

\subsection{General Criterion}\label{sec:BlowUpCrit}

We first establish a criterion for the lack of strong completeness and non-existence of weak set attractors. Due to its independent interest, we formulate the following Lemma for general filtered RDS satisfying the strong Markov property.

\begin{lemma}\label{lemm:BlowUpCrit}
Consider a filtered local RDS $(\theta, \varphi)$ on $\mathbb R^d$ which satisfies the strong Markov property.
    As in Lemma \ref{lemm:ballcrit}, let $0<r_1<r_2<\dots$ be an increasing sequence of radii with $r_n \to\infty$ and denote by $B_n = B_{0}(r_n)$ the centered open balls with radius $r_n$ and by $C_n$ their closures. 
    Let $\tau_n(\omega,x)$ denote the $(\mathcal F_t)$-stopping time 
    $$\tau_n(\omega,x) \coloneqq \inf\{t\geq 0: |\varphi(t,\omega,x)| = r_n\}.$$
    Depending on whether the radius of $x$ is larger or smaller than $r_n$, this describes either the first entry into the closed ball $C_n$ or the first exit of the open ball $B_n$. 
    Let $t_2 \geq t_3\geq\dots$ be a decreasing sequence of positive times, $p^*\in (\frac{1}{2},1]$ and $\alpha \in (0, 2p^*-1)$. Define a ``growth rate" $m: \mathbb N \times \mathbb R_{>0}\to \mathbb N \cup \{\infty\}$ by
    \begin{equation}\label{eq:Growthrate}
        m(n,t) \coloneqq \sup \left\{m \geq n: \frac{1}{\alpha}\sum_{k=n}^{m-1} t_{k+1} < t \right\}.
    \end{equation}
    Recall that the radius of a set $\mathcal M\subseteq \mathbb R^d$ is given by $\operatorname{rad}(\mathcal M) \coloneqq \sup_{x\in \mathcal M} |x|$. 
    For any $n\in \mathbb N$, we call a set $\mathcal M$ rapidly growing with respect to $n$, if 
\begin{equation}\label{eq:rapidGrow}
  \begin{cases}
      \tau^\star(\omega, \mathcal M) \leq t \quad \text{or} \quad \operatorname{rad}(\varphi(t,\omega, \mathcal M)) \geq r_{m(n,t)},& \text{for all } t>0 \text{ with } m(n,t) < \infty, \\
      \tau^\star(\omega, \mathcal M) \leq t,& \text{for all } t>0 \text{ with } m(n,t) = \infty.
  \end{cases}
    \end{equation}

    Suppose that
    \begin{enumerate}
        \item[(i)] for all $n\geq 2$ and all $x,y \in \mathbb R^d$ with $|x|=r_n$ and $|y|=r_{n-1}$, we have
        \begin{equation}\label{eq:growthBound}
            \mathbb P(\tau_{n+1}(x)\wedge \tau_{n+1}(y) \leq \tau_{n-1}(x) \wedge t_n) \geq p^*.
        \end{equation}
        \item[(ii)]
       We have
        \begin{equation}\label{eq:anchorBound}
            \lim_{n \to \infty}\inf_{x\in B_1}\mathbb P(|\varphi(t, \omega,x)| \leq r_{m(n,t)}, ~\forall\,t\in \mathbb R_{>0}) = 1,
        \end{equation}
        where we set $r_\infty \coloneqq \infty.$
    \end{enumerate}
    Let $\mathcal I_n$ denote the family of all connected compact subsets of $\mathbb R^d$ that intersect $B_1$ and have radius at least $r_n$.
    
    Then the sequence 
    $$p_n \coloneqq \inf_{\mathcal M \in \mathcal I_{2n}} \mathbb P\left(\mathcal M \text{ grows rapidly with respect to } n\right)$$
    is positive for sufficiently large $n$ and converges to 1 as $n \to \infty$.
\end{lemma}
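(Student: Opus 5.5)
We run an iterative two-point scheme, inspired by \cite{CranstonScheutzowSteinsaltz99}, that tracks a pair of points of $\mathcal M$ and pushes it outward ball by ball. Fix $\mathcal M\in\mathcal I_{2n}$. Since $\mathcal M$ is connected, meets $B_1$ and has radius at least $r_{2n}$, for every $k\le 2n$ it contains points of every radius in $[r_1,r_{2n}]$. The same holds for the image set: as long as $\tau^\star(\omega,\mathcal M)>t$, the set $\varphi(t,\omega,\mathcal M)$ is connected, and it meets every sphere $\partial B_k$ with $|\varphi(t,\omega,x_0)|\le r_k\le\operatorname{rad}(\varphi(t,\omega,\mathcal M))$, where $x_0\in\mathcal M\cap B_1$ is an ``anchor'' point.

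The plan is to define a level process $N_j$ and stopping times $\sigma_j$ as follows.
\begin{enumerate}
\item[1.] At time $\sigma_j$, using connectedness, pick $x_j,y_j\in\varphi(\sigma_j,\omega,\mathcal M)$ with $|x_j|=r_{N_j}$ and $|y_j|=r_{N_j-1}$, chosen measurably, for instance by a countable dense selection or the first point along a fixed parametrisation.
\item[2.] Run the two trajectories until the stopping time $\tau_{N_j+1}(x_j)\wedge\tau_{N_j+1}(y_j)\wedge\tau_{N_j-1}(x_j)\wedge t_{N_j}$.
\item[3.] If one of the two points reaches $r_{N_j+1}$ first, set $N_{j+1}=N_j+1$. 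By connectedness and the anchor, both the new outer level and the level $r_{N_j}$ are again present in the image.
\item[4.] Otherwise set $N_{j+1}=N_j-1$; the points at levels $r_{N_j-1}$ and $r_{N_j-2}$ are still present.
\end{enumerate}
Hypothesis (i) together with the strong Markov property (via Proposition \ref{prop:tau}(iii) for concatenated stopping times) yields $\mathbb P(N_{j+1}=N_j+1\mid\mathcal F^+_{\sigma_j})\ge p^*$. Hence $N_j$ dominates a random walk with drift $2p^*-1>\alpha$, started at $2n$.

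Next we compare with the growth rate $m(n,t)$. Each up-step from level $k$ costs time at most $t_k$, and since the $t_k$ decrease, steps at higher levels cost at most $t_{k+1}$ for $k\ge n$. Using a large-deviation (Azuma/Hoeffding) bound for the walk, with probability tending to $1$ as $n\to\infty$ the walk never drops below level $n$. On that event, after $j$ steps it is at least $2n+(2p^*-1-\delta)j-C$ for all $j$. The elapsed time until $N$ first reaches level $m$ is then at most $\frac{1}{\alpha}\sum_{k=n}^{m-1}t_{k+1}$, where the factor $1/\alpha$ absorbs the total time spent on down- and up-steps: each net unit of height needs about $1/(2p^*-1)<1/\alpha$ steps, each of cost at most $t_{k+1}$. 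Here the starting buffer $2n$ versus $n$ gives the slack needed to absorb the constant $C$ and the fact that excursions downward cost the larger times $t_k$. This gives $\operatorname{rad}(\varphi(t,\omega,\mathcal M))\ge r_{N(t)}\ge r_{m(n,t)}$. If $m(n,t)=\infty$, the levels diverge before time $t$, forcing blow-up, i.e.\ $\tau^\star(\omega,\mathcal M)\le t$.

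Hypothesis (ii) is used to guarantee that the anchor $\varphi(t,\omega,x_0)$ stays below $r_{m(n,t)}$, uniformly in $x_0\in B_1$, with probability close to $1$. This ensures that the spheres between the anchor and the outer tracked point are genuinely crossed by the connected image, so the selection in step 1 is always possible. Intersecting the two good events gives $p_n\to1$, and in particular $p_n>0$ for large $n$.

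The main obstacle is the bookkeeping of the time comparison in step 3–4 and the measurable selection of $x_j,y_j$ as $\mathcal F^+_{\sigma_j}$-measurable random points so that (i) applies conditionally. For the time comparison I would first try a supermartingale of the form $\exp(\lambda(\text{time}-\frac1\alpha\sum t_{k+1}))$ indexed by the level process. For the selection, (i) is uniform in $x,y$, and the strong Markov property extends to random initial points measurable w.r.t.\ $\mathcal F^+_{\sigma_j}$ by approximating with countably-valued choices.
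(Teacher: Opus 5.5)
This is essentially the paper's proof. It uses the same two-point recursion on the connected image with an anchor $x_0\in\mathcal M\cap B_1$, the same use of (i) and the strong Markov property to dominate a $p^*$-biased walk started at $2n$, hypothesis (ii) to keep the anchor inside the current inner ball, and the same time bookkeeping via $\Delta T_{i+1}\le t_{N_i}$ with $(t_k)$ decreasing. The paper makes the step you flag as the main obstacle elementary by building the deterministic barrier $K_i=n+\lceil\alpha i\rceil$ into the recursion and stopping in failure once $N_i\le K_i$ or $|\varphi(T_i,\omega,x_0)|>r_{K_i}$. Proposition~\ref{prop:elemIneq} then directly gives $m(n,t)\le K_{i^*}\le N_{i^*}-1$ and keeps the anchor in $C_{N_i-1}$, with no Azuma or supermartingale argument. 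Note that the event you actually need is that the walk stays above the line $n+\alpha j$ for all $j$, whose probability tends to $1$ because the walk starts at $2n$; staying above level $n$ alone does not imply the linear bound you state.
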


\begin{corollary}\label{cor:BlowUp}
    Let the assumptions of Lemma \ref{lemm:BlowUpCrit} hold. 
    \begin{enumerate}
        \item[(i)] Then $\varphi$ does not admit a weak set attractor.
        \item[(ii)] If furthermore $\sum_{i=2}^\infty t_i < \infty$, then $\varphi$ is not strongly complete, and for every $\varepsilon>0$, there exists an $n\in \mathbb N$, such that
        \begin{equation}\label{eq:quickBlowUp}
            \mathbb P(\tau^\star(\omega, \mathcal M) <\varepsilon)>1-\varepsilon\quad \forall\, \mathcal M \in \mathcal I_n,
        \end{equation}
        where $\mathcal I_n$ is the family of sets defined in Lemma \ref{lemm:BlowUpCrit}.
    \end{enumerate}
\end{corollary}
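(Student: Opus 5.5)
We derive both parts of Corollary~\ref{cor:BlowUp} from Lemma~\ref{lemm:BlowUpCrit}, using only the conclusion $p_n\to 1$ and the structure of the growth rate $m(n,t)$.

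\textbf{Part (i).} We argue by contradiction. Suppose a weak set attractor $A$ exists; then $\varphi$ is strongly complete, so $\tau^\star(\omega,\mathcal M)=\infty$ almost surely for every compact $\mathcal M$.

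First, $m(n,t)$ is always finite. Since $t_k$ is decreasing and positive, $\sum_{k\ge n} t_{k+1}\ge \sum_{k\ge n} t_{N}$ diverges unless the $t_k$ are summable. If they were summable, then $m(n,t)=\infty$ for large $n$, and rapid growth would force blow-up, contradicting strong completeness once $p_n>0$. So we may assume $m(n,t)<\infty$, and clearly $m(n,t)\to\infty$ as $t\to\infty$.

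Next, attraction in probability of the deterministic bounded set $\mathcal M$ gives the following. Take $\delta<1/2$ and apply Theorem~\ref{theo:existenceWeakSetAtt}'s converse directionally: since $A(\theta_t\omega)$ has the same law as $A(\omega)$ (by $\theta$-invariance of $\mathbb P$), there is a deterministic $R_\delta$ with $\mathbb P(\operatorname{rad}A(\omega)\le R_\delta)>1-\delta/2$. Convergence in probability of the Hausdorff semi-distance then yields
\[
\mathbb P\big(\operatorname{rad}\varphi(t,\omega,\mathcal M)\le R_\delta+1\big)>1-\delta \quad\text{for all large } t.
\]

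Now fix $n$ with $p_n>\delta$, and pick $\mathcal M\in\mathcal I_{2n}$, for instance the segment from $0$ to a point of modulus $r_{2n}$. Rapid growth gives $\operatorname{rad}\varphi(t,\omega,\mathcal M)\ge r_{m(n,t)}$ with probability at least $p_n$. For $t$ large, $r_{m(n,t)}>R_\delta+1$, so the two events are disjoint and their probabilities satisfy $p_n+1-\delta>1$. This is a contradiction. (Here we use that a weak attractor attracts every bounded deterministic set, and that $\varphi$ is strongly complete.)

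\textbf{Part (ii).} Assume $S:=\sum_{i\ge2}t_i<\infty$. Then for every $t>0$ there is $N(t)$ with $\frac1\alpha\sum_{k\ge n}t_{k+1}<t$ for all $n\ge N(t)$, which means $m(n,t)=\infty$. Given $\varepsilon>0$, choose $n_0\ge N(\varepsilon/2)$ such that $p_{n_0}>1-\varepsilon$, which is possible since $p_n\to1$, and set $n:=2n_0$.

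For $\mathcal M\in\mathcal I_{n}$, the definition of $p_{n_0}$ says that with probability at least $p_{n_0}$ the set $\mathcal M$ grows rapidly with respect to $n_0$. Evaluating \eqref{eq:rapidGrow} at $t=\varepsilon/2$, where $m(n_0,\varepsilon/2)=\infty$, gives $\tau^\star(\omega,\mathcal M)\le\varepsilon/2<\varepsilon$. This proves \eqref{eq:quickBlowUp}.

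Lack of strong completeness follows at once: $\mathcal I_n$ is non-empty, so $\mathbb P(\tau^\star(\omega,\mathbb R^d)<\infty)\ge\mathbb P(\tau^\star(\omega,\mathcal M)<\varepsilon)>0$.

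The only delicate point is in (i): the possibility $m(n,t)=\infty$ with $\sum t_i$ summable. This case is handled by the same strong-completeness contradiction as in (ii).
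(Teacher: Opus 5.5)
Your proof is correct and follows essentially the same route as the paper. For (ii) you choose $n$ so that $m(n,\varepsilon/2)=\infty$ and $p_n>1-\varepsilon$. For (i) you split on whether $\sum t_i$ converges: the summable case is reduced to the failure of strong completeness. In the non-summable case, tightness of $\operatorname{rad}A$ (via $\theta$-invariance) combined with attraction in probability contradicts rapid growth.

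The differences are minor. You contradict the fixed-time bound $\operatorname{rad}\varphi(t,\omega,\mathcal M)\ge r_{m(n,t)}$, which holds with probability at least $p_n>\delta$, directly, rather than going through the event $\{\lim_{t}\operatorname{rad}\varphi(t,\omega,\mathcal M)=\infty\}$; this is slightly cleaner. The appeal to a ``converse'' of Theorem~\ref{theo:existenceWeakSetAtt} is unnecessary, and the displayed comparison with $t_N$ is wrong as written; only the case split on $\sum t_i$ is actually used.
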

\begin{proof}
    (ii): Suppose that the assumptions of Lemma \ref{lemm:BlowUpCrit} hold with $\sum_{i = 2}^\infty t_i <\infty.$ Then for every $n$ there exists some $t$ such that $m(n,t)=\infty.$ Thus, every rapidly growing subset $\mathcal M$ must satisfy $\tau^\star(\omega, \mathcal M)<\infty$ and $\varphi$ is not strongly complete. Furthermore, for every $\varepsilon$ there exists a natural number $N_1$ such that $m(n,\varepsilon/2)=\infty$ for all $n\geq N_1$ and a natural number $N_2$ such that $p_n >1-\varepsilon$ for $n \geq N_2$. Consequently, \eqref{eq:quickBlowUp} holds for $n \coloneqq 2\max(N_1,N_2).$

    (i): Suppose that the assumptions of Lemma \ref{lemm:BlowUpCrit} hold. If $\sum_{i = 2}^\infty t_i <\infty$, by (ii), $\varphi$ is not strongly complete and hence does not admit a set attractor. Hence, we assume $\sum_{i = 2}^\infty t_i = \infty$ and argue by contradiction. Suppose there exists a weak set attractor $A(\omega)$. By Definition \ref{def:Attr}, each deterministic bounded set $\mathcal M$ must satisfy 
    $$\operatorname{dist}(\varphi(t,\omega, \mathcal M), A(\theta_t\omega)) \to 0, \quad \text{in probability.}$$
    At the same time, by Lemma \ref{lemm:BlowUpCrit}, there exists some compact set $\mathcal M$ such that 
    \begin{equation}\label{eq:growingProb}
        p\coloneqq\mathbb P\left(\lim_{t\to \infty} \operatorname{rad}(\varphi(t,\omega, \mathcal M)) = \infty\right)>0.
    \end{equation}
    Choose a large radius $R$, such that $\mathbb P(\operatorname{rad}(A(\omega))>R)\leq p/4$, and a large time $T$, such that $\mathbb P(\operatorname{dist}(\varphi(t,\omega,\mathcal M), A(\theta_t \omega))> 1) \leq p/4$ for all $t\geq T$. Since 
    $$\operatorname{rad}(\varphi(t,\omega, \mathcal M))\leq \operatorname{rad}(A(\theta_t\omega)) + \operatorname{dist}(\varphi(t,\omega, \mathcal M), A(\theta_t\omega)),$$
    we obtain that $\mathbb P(\operatorname{rad}(\varphi(t,\omega, \mathcal M))>R+1)\leq p/2$, which contradicts \eqref{eq:growingProb}.
\end{proof}

Structurally, Lemma \ref{lemm:BlowUpCrit} is similar to Lemma \ref{lemm:ballcrit}, the criterion for strong completeness and for the existence of set attractors introduced in the previous section. In fact, in the proof of Theorem \ref{theo:BlowUp}, the sequences $r_n$ and $t_n$ will be chosen essentially in the same way as for Theorem \ref{theo:attractors2}. The main difference between Lemma \ref{lemm:ballcrit} and Lemma \ref{lemm:BlowUpCrit} is that in Lemma \ref{lemm:ballcrit} it is sufficient to control the evolution of the balls $B_n$. Once such a ball reaches the boundary of the next larger ball, or is contained in the next smaller ball, we forget the exact shape of the image and continue tracking only the evolution of the smallest ball the image is contained in. This scheme is insufficient to establish a lack of strong completeness or the non-existence of set attractors, since the latter require a lower rather than an upper bound on the size of the image of some set. Instead, we need to control the probability that a set that intersects the boundary of some ball $B_n$ reaches the boundary of $B_{n+1}$ before being fully contained in $B_{n-1}$. However, if we tried to control this probability for any set intersecting $\partial B_n$, then this would in particular hold for singleton sets, and a related condition for blow-up would also imply that the 1-point Markov process is transient. Thus, such a condition would never be satisfied for any SDE with a stationary distribution, as the one we consider in this article.

Instead, we only consider sets $\mathcal M$ that are connected and intersect both $\partial B_n$ and $\partial B_{n-1}$. Then, condition \eqref{eq:growthBound} implies that, with probability at least $p^*>\frac{1}{2}$, the set $\mathcal M$ reaches the boundary of $B_{n+1}$ before its being fully contained in $B_{n-1}$ or some fixed time $t_n$ having elapsed. Once either of the three events has occurred, we reassign $\mathcal M$ to be its image under the flow, and set $n$ equal to $n+1$ if the new $\mathcal M$ intersects $B_{n+1}$ and equal to $n-1$ otherwise. We assume that $\mathcal M$ also contains a point with much smaller radius, in particular a point inside $B_{n-1}$ for the new $n$. Since $\mathcal M$ is connected, this leaves us in a situation where the new $\mathcal M$ intersects both $\partial B_{n-1}$ and $\partial B_n$ for the new $n$. Assuming that $n\geq 2$ we can continue iterating. Since the probability of $n$ increasing is at least $p^*>\frac{1}{2}$, the process $n$ has a positive drift of at least $2p^*-1$. Since we can control the maximal amount of time each step in the iteration can take, we obtain a lower bound for the growth rate of the radius of the set $\mathcal M$. This is what we call rapid growth in Lemma \ref{lemm:BlowUpCrit}. If we start with a sufficiently large $n$, the probability that $n$ never drops below 2 is large. It remains to argue that after each step, the new set $\mathcal M$ still contains some point in $ B_{n-1}$. For this, we assume that the connected set $\mathcal M$ at the start of the procedure also contains a point $x_0$ close to the origin, in particular a point in $B_1$. This point serves as an anchor for our set $\mathcal M$. We require that with high probability, after each iterative step, the image of the anchor $x_0$ be contained in $B_{n-1}$. Since $\mathcal M$ is connected, this ensures that after every step $\mathcal M$ intersects $\partial B_{n-1}$. This is the reason why we need the somewhat counterintuitive condition \eqref{eq:anchorBound} that provides an upper bound for the growth rate of a single point.

\begin{figure}
    \centering
    \begin{overpic}[width=0.5\linewidth]{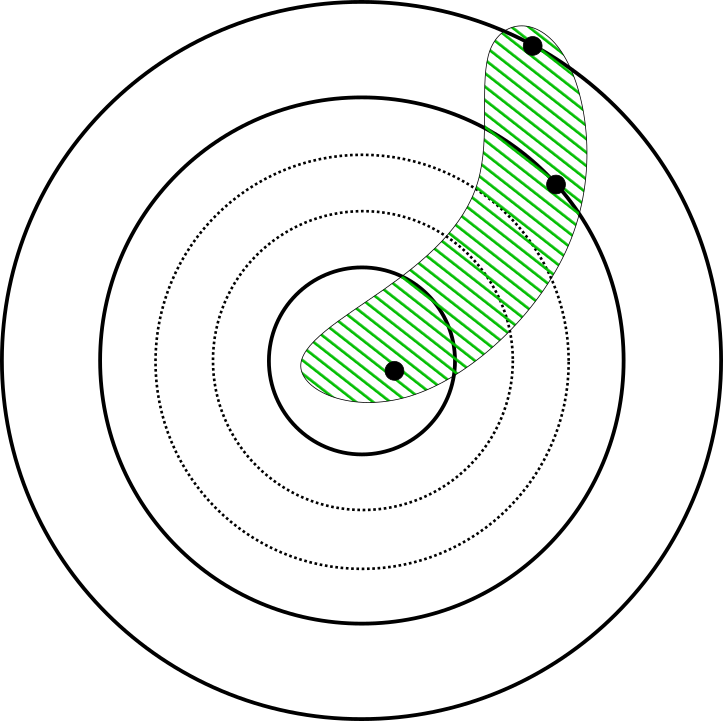}
        \put(52,66){\color{Green}$\mathcal M_i$}
        \put(52,51) {$x_i$}
        \put(71, 89) {$y_i$}
        \put(72,71){$z_i$}
        \put(80,6){$\partial B_{N_i}$}
        \put(60,11){$\partial B_{N_{i}-1}$}
        \put(45,32.5){$\partial B_{K_i}$}
    \end{overpic}
    \caption{The main quantities of the recursion $\mathcal M_i$, $x_i$, $y_i$, and $z_i$ visualized.}
    \label{fig:blowupCirt}
\end{figure}
\begin{proof}[Proof of Lemma \ref{lemm:BlowUpCrit}]
    Suppose that assumptions (i) and (ii) of Lemma \ref{lemm:BlowUpCrit} hold. Fix some natural number $n \in \mathbb N$ and some set $\mathcal M \in \mathcal I_{2n}$, that is, some connected set $\mathcal M$ that intersects $B_1$ and contains at least one point outside $B_{2n}$. The following proof is divided into three steps. First, we set up a recursive scheme, that allows us to treat the continuous time problem as a time-discrete process. There is a possibility that we reach a point at which we are unable to continue this recursion and have to stop. We distinguish between stopping ``successfully'' and stopping ``in failure''. In Step 2 we show that if the recursion does not stop in failure, the set $\mathcal M$ is growing rapidly with respect to $n$. Finally, in Step 3, we find an upper bound for the probability that the recursion stops in failure. Using the result of Step 2, this provides a lower bound for $p_n$.
    
    \textbf{Step 1 -- Introducing the Recursion:}  We start by setting up the recursive scheme, which involves multiple quantities. 
    For this, let $N_0=2n$, $\mathcal M_0 = \mathcal M$, $T_0 = 0$ and choose points $x_0\in B_1 \cap \mathcal M$, $y_0 \in \partial B_{2n}\cap \mathcal M$ and $z_0 \in \partial B_{2n-1}\cap \mathcal M$. Note that such points must exist since $\mathcal M$ is connected. Also, let $(K_i)_{i \in \mathbb N}$ be the deterministic sequence given by $K_i\coloneqq n+\lceil \alpha i\rceil$. The values of $N_i$, $T_i$, $\mathcal M_i$, $x_i$, $y_i$ and $z_i$ will be determined recursively in such a way that $2 \leq N_{i+1} \in \{N_i-1,N_i+1\}$, $T_{i+1}=T_i+\Delta T_{i+1}$ for some $\Delta T_{i+1}>0$ and 
    \begin{equation}\label{eq:recursionCond}
\mathcal M_i = \varphi(T_i,\omega,\mathcal M),\quad
x_i = \varphi(T_i,\omega,x_0) \in C_{K_i}, \quad
y_i \in \partial B_{N_i}\cap \mathcal M_i, \quad
z_i \in \partial B_{N_i-1}\cap \mathcal M_i,
\end{equation}
see Figure \ref{fig:blowupCirt}.
    Furthermore, there is a possibility for the recursion to stop at some iteration $i^*$, after which, for all $i>i^*$, we set $N_i \coloneqq N_{i^*}, T_i \coloneqq T_{i^*},$ and leave the remaining quantities undefined. In this case, we will qualify whether the recursion stopped ``successfully" or ``in failure", which will be relevant for Step 2 below.

    Suppose that after $i$ iterations the recursion did not stop and that $\mathcal M_i, x_i, y_i$ and $z_i$ satisfy \eqref{eq:recursionCond}. Then we set
    $$N_{i+1} \coloneqq \begin{cases}N_i+1, & \text{if } \tau_{N_i+1}(\theta_{T_i}\omega, y_i) \wedge \tau_{N_i+1}(\theta_{T_i}\omega, z_i) \leq \tau_{N_i-1}(\theta_{T_i}\omega, y_i) \wedge t_{N_i},\\
    N_i-1, & \text{otherwise,}
    \end{cases}
   $$
   and $T_{i+1} = T_i +\Delta T_{i+1}$ with
   $$\Delta T_{i+1} \coloneqq \tau_{N_i+1}(\theta_{T_i}\omega, y_i) \wedge \tau_{N_i+1}(\theta_{T_i}\omega, z_i) \wedge \tau_{N_i-1}(\theta_{T_i}\omega, y_i) \wedge t_{N_i} > 0.$$
    If $\tau^\star(\omega, \mathcal M) \leq T_{i+1}$, we stop the recursion successfully. If it did not stop successfully and either $|\varphi(T_{i+1}, \omega, x_0)| >r_{K_{i+1}}$ or ${N_{i+1} \leq K_{i+1}}$, we stop the recursion in failure. Otherwise, we set 
    $${\mathcal M_{i+1} \coloneqq \varphi(T_{i+1}, \omega, \mathcal M) = \varphi(\Delta T_{i+1}, \theta_{T_i}\omega, \mathcal M_i)} \quad\text{and}\quad x_{i+1} =  \varphi(T_{i+1},\omega,x_0) \in C_{K_{i+1}}.$$
    We claim that the set $\mathcal M_{i+1}$ contains at least one point outside $B_{N_{i+1}}$.\\
    \textit{Case 1:} If $N_{i+1} = N_i+1$, then $\Delta T_{i+1}$ is equal to either $\tau_{N_i+1}(\theta_{T_i}\omega, y_i)$ or $\tau_{N_i+1}(\theta_{T_i}\omega, z_i)$ and consequently either 
    $$\varphi(\Delta T_{i+1}, \theta_{T_i} \omega, y_i) \in \partial B_{N_i+1} \cap \mathcal M_{i+1}\quad\text{or}\quad\varphi(\Delta T_{i+1}, \theta_{T_i} \omega, z_i) \in \partial B_{N_i+1} \cap \mathcal M_{i+1}.$$
    \textit{Case 2:} If $N_{i+1} = N_i-1$, since $\Delta T_{i+1} \leq \tau_{N_i-1}(\theta_{T_i}\omega, y_i)$, it follows that $$\varphi(\Delta T_{i+1}, \theta_{T_i} \omega, y_i)\in \mathcal M_{i+1}\setminus B_{N_i-1}.$$
     Since $x_{i+1} \in \mathcal M_{i+1}\cap C_{K_{i+1}} \subseteq \mathcal M_{i+1}\cap C_{N_{i+1}-1}$ and using the fact that $\mathcal M_{i+1}$ is connected, the set $\mathcal M_{i+1}$ must intersect both $\partial B_{N_{i+1}}$ and $\partial B_{N_{i+1}-1}$. This allows us to choose $y_{i+1}$ and $z_{i+1}$ in such a way that $y_{i+1}$ and $z_{i+1}$ are measurable with respect to $\mathcal F_{T_{i+1}}$ and satisfy $y_{i+1} \in \partial B_{N_{i+1}}\cap \mathcal M_{i+1}$ and $z_{i+1} \in \partial B_{N_{i+1}-1}\cap \mathcal M_{i+1}$.
    The measurable selection can be achieved, for example, by choosing the smallest elements using the lexicographic ordering of $\mathbb R^d$.
    It is easy to check that $\mathcal M_i$, $x_i$, $y_i$ and $z_i$ satisfy \eqref{eq:recursionCond}, allowing us to continue the recursion.

    \textbf{Step 2 -- Showing Rapid Growth:} Suppose that the recursion does not stop in failure, that is, it either stops successfully or it never stop. We will show that then $\mathcal M$ grows rapidly with respect to $n$. Let $t \geq 0$. We distinguish two cases.
    
    \textit{Case 1:} Suppose that $t<T_\infty \coloneqq \lim_{i \to \infty} T_i$. Then there exists a unique integer $i^*\in \mathbb N_0$, such that $t \in [T_{i^*}, T_{i^*+1})$. In particular, the recursion did not stop at or before step $i^*$. Since $\Delta T_{i+1} \leq t_{N_i}$, $N_i> K_i$, for all $i\leq i^*$, and the sequence $t_n$ is decreasing, we have
    \begin{equation}\label{ineq:TiBound}
        t <T_{i^*+1} = \sum_{i = 0}^{i^*} \Delta T_{i+1} \leq \sum_{i = 0}^{i^*} t_{N_i} \leq \sum_{i = 0}^{i^*} t_{K_i+1} < \frac{1}{\alpha} \sum_{k = n+1}^{K_{i^*}+1} t_{k}. 
    \end{equation}
    Here, the last inequality comes from an elementary computation which we provide in Proposition \ref{prop:elemIneq} below. 
    By the definition of $m(n,t)$ this implies that $m(n,t) \leq K_{i^*} <\infty$.
    If $\tau^\star(\omega, \mathcal M) \leq t$, \eqref{eq:rapidGrow} holds. Otherwise, by the construction of our recursion, we have $\operatorname{rad}(\varphi(t, \omega, \mathcal M) )\geq r_{N_{i^*}-1}$ and $N_{i^*} > K_{i^*}$, so in particular $\operatorname{rad}(\varphi(t, \omega, \mathcal M)) \geq r_{K_{i^*}} \geq r_{m(n,t)}$, establishing \eqref{eq:rapidGrow}.

    \textit{Case 2:} Suppose that $t \geq T_\infty$. If the recursion stops successfully at some step $i^*$, then $\tau^\star(\omega, \mathcal M) \leq T_{i^*}\leq t$, establishing \eqref{eq:rapidGrow}. Hence, we assume that the recursion never stops. Then 
    \begin{equation}\label{eq:bu}
        \limsup_{t \nearrow T_\infty} \operatorname{rad}(\varphi(t, \omega, \mathcal M)) \geq \lim_{i \to \infty} r_{K_i} = \infty.
    \end{equation}
    If $\tau^\star(\omega, \mathcal M)> t\geq T_\infty$, then $\varphi(\cdot, \omega, \cdot): [0,T_\infty] \times \mathcal M \to \mathbb R^d$ is a continuous map that is bounded since its domain is compact. This contradicts \eqref{eq:bu}, so $\tau^\star(\omega, \mathcal M) \leq T_{\infty}\leq t$, establishing \eqref{eq:rapidGrow}.

    \textbf{Step 3 -- Bounding the Probability:} We complete the proof by finding an upper bound for the probability that the stops in failure. For this, let $E_0$ be the event in which the recursion stops successfully, and $E_1$ and $E_2$ be the events given by
    \begin{align*}
        E_1 &\coloneqq \{\omega \in \Omega\setminus E_0 : \exists\, i \geq 1,~ |\varphi(T_{i},\omega, x_0)| >r_{K_{i}}\},\text{ and}\\
        E_2 &\coloneqq \{\omega \in \Omega\setminus (E_0\cup E_1) : \exists\, i \geq 1,~ N_i \leq K_i\}.
    \end{align*}
    Clearly, the event that the recursion stops in failure is contained in $E_1 \cup E_2$. In the following, we will show that $\mathbb P(E_1)$ and $\mathbb P(E_2)$ converge to $0$ as $n \to \infty$ uniformly in $\mathcal M$.
    
    We start by bounding the probability for $E_1$. Recalling the estimate \eqref{ineq:TiBound} and the definition of $m(n,t)$, we get $m(n,T_{i+1}) \leq K_{i}\leq K_{i+1}$ for all $i\geq 0$.
    Thus, $E_1$ is contained in the larger event $\tilde E_1\supseteq E_1$ given by
    $$\tilde E_1 \coloneqq \left\{\omega \in \Omega: \exists\, t> 0: |\varphi(t,\omega,x_0)| > r_{m(n,t)}\right\}.$$
    By \eqref{eq:anchorBound} the probability of $\tilde E_1$ goes to $0$ as $n\to \infty$, so the same holds for $E_1$.

    To control the probability for $E_2$, note that, by Proposition \ref{prop:tau} (iii),  $(T_i)_{i\geq 1}$ is a sequence of $(\mathcal F_t^+)$-stopping times and by \eqref{eq:growthBound} and the strong Markov property \eqref{eq:StrongMarkov}, we have
    \begin{align*}
        &\quad~\mathbb E \left[\mathds 1_{N_{i+1}= N_i+1}\mid \mathcal F^+_{T_i}\right](\omega)\\
        &= \mathbb P_{\omega'}\big(\tau_{N_i(\omega)+1}(\omega',y_i(\omega)) \wedge \tau_{N_i(\omega)+1}(\omega',z_i(\omega)) \leq \tau_{N_i(\omega)-1}(\omega', y_i(\omega)) \wedge t_{N_i(\omega)}\big)\geq p^*,
    \end{align*}
    for all $i \geq 0$ and all $\omega$ such that the recursion did not stop before step $i$. Here, $\mathbb P_{\omega'}$ denotes the probability with respect to $\omega'$ for $\omega$ fixed. By a comparison argument, the probability of $E_2$ is upper bounded by the probability
    $$\tilde p_n \coloneqq \mathbb P\left(\exists \,i \geq 1: \tilde N_i\leq K_i \mid \tilde N_0 = 2n\right)\geq \mathbb P(E_2),$$
    where $\tilde N$ is a homogeneous discrete time Markov process on $\mathbb Z$ with 
    \begin{align*}
        \mathbb P\left(\tilde N_{i+1} = m+1 \mid \tilde N_i = m\right) &= p^*, \quad\quad \text{ and}\\ 
        \mathbb P\left(\tilde N_{i+1} = m-1 \mid \tilde N_i = m\right) &= 1-p^*.
    \end{align*}
    Clearly, $\tilde N$ has a positive drift with rate $2p^*-1>\alpha$ and $\tilde p_n \to 0$ as $n \to \infty$.
    Thus, we have
    \begin{align*}
        \mathbb P(\mathcal M \text{ grows rapidly with respect to } n) &\geq 1 - \mathbb P(E_1) -\mathbb P(E_2)\\
        &\geq 1 - \mathbb P(\tilde E_1) - \tilde p_n \to 1\quad \text{as } n \to \infty,
    \end{align*}
    where the quantity $1 - \mathbb P(\tilde E_1) - \tilde p_n$ is independent of $\mathcal M$. The claim follows.
    \end{proof}

\begin{proposition}\label{prop:elemIneq}
    As in the proof of Lemma \ref{lemm:BlowUpCrit}, let $n \in \mathbb N$, $\alpha \in (0,1)$, and $t_2, t_3 , \dots$ be a decreasing sequence of positive real numbers. Also, let $K_i\coloneqq n+\lceil \alpha i\rceil$. Then, for all $i^*\in \mathbb N$ we have
    $$\sum_{i = 0}^{i^*} t_{K_i+1} < \frac{1}{\alpha} \sum_{k = n}^{K_{i^*}} t_{k+1}.$$
\end{proposition}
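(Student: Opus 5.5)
The idea is to regroup the left-hand side according to the value of $K_i$ and then compare the two sums by Abel summation, using only that $(t_k)$ is decreasing and positive. Since $i\mapsto K_i = n+\lceil \alpha i\rceil$ is nondecreasing, every index $i\in\{0,\dots,i^*\}$ satisfies $n\le K_i\le K^*\coloneqq K_{i^*}$. For $k\in\{n,\dots,K^*\}$ let
$$c_k \coloneqq \#\{i\in\{0,\dots,i^*\}: K_i = k\}, \qquad C_k\coloneqq \sum_{j=n}^k c_j = \#\{i\in\{0,\dots,i^*\}: K_i\le k\}.$$
Then $\sum_{i=0}^{i^*} t_{K_i+1} = \sum_{k=n}^{K^*} c_k\, t_{k+1}$. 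The right-hand side has the same form with every weight equal to $1/\alpha$, so its cumulative weights are $D_k \coloneqq (k-n+1)/\alpha$.

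The key counting step is to show $C_k < D_k$ for every $k\in\{n,\dots,K^*\}$. Write $j=k-n\ge 0$. Then $K_i\le k$ is equivalent to $\lceil\alpha i\rceil\le j$, which for integer $j$ is equivalent to $\alpha i\le j$. Hence
$$C_k\le \#\{i\in\mathbb N_0: i\le j/\alpha\} = \lfloor j/\alpha\rfloor+1\le \frac{j}{\alpha}+1<\frac{j+1}{\alpha}=D_k,$$
where the last inequality uses $\alpha<1$. Comparing block by block fails: a single value of $K_i$ can be attained more than $1/\alpha$ times (for instance with $\alpha=0.4$). This is why I would work with the cumulative counts, and I expect this to be the only real subtlety.

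Finally, I apply Abel summation, setting $t_{K^*+2}$ aside as a boundary term:
$$\sum_{k=n}^{K^*} c_k t_{k+1} = \sum_{k=n}^{K^*-1} C_k\,(t_{k+1}-t_{k+2}) + C_{K^*}\, t_{K^*+1}.$$
The same identity holds with $c_k, C_k$ replaced by $1/\alpha, D_k$, and then it gives exactly $\frac1\alpha\sum_{k=n}^{K^*}t_{k+1}$. Because $t$ is decreasing, each difference $t_{k+1}-t_{k+2}$ is nonnegative, so $C_k\le D_k$ bounds the first sum termwise. For the boundary term, $C_{K^*}<D_{K^*}$ together with $t_{K^*+1}>0$ gives strict inequality. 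Adding the two comparisons yields the claimed strict inequality.
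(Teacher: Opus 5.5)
Your proof is correct, but it takes a different route from the paper's. The paper never regroups by the value of $K_i$. It first separates the $i=0$ term $t_{n+1}$. It then writes the rest as $\int_0^{i^*} t_{n+1+\lceil\alpha\lceil s\rceil\rceil}\,\dd s$ and uses $\lceil\alpha\lceil s\rceil\rceil\ge\lceil\alpha s\rceil$ with monotonicity of $(t_k)$ to bound the integrand by $t_{\lceil n+1+\alpha s\rceil}$. Substituting $u=n+1+\alpha s$ gives $\frac1\alpha\int_{n+1}^{n+1+\alpha i^*}t_{\lceil u\rceil}\,\dd u$. Extending the upper limit to $K_{i^*}+1$ bounds this by $\frac1\alpha\sum_{k=n+2}^{K_{i^*}+1}t_k$. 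Strictness then comes from $t_{n+1}<t_{n+1}/\alpha$, the only place where $\alpha<1$ enters.

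You instead make the cumulative counting explicit, via $C_k\le\lfloor (k-n)/\alpha\rfloor+1<D_k$, and finish with summation by parts. The paper's monotone-integrand comparison is more compact and handles the multiplicity issue you flag implicitly. A value of $K_i$ being hit more than $1/\alpha$ times never has to be addressed there, because the integral comparison is itself cumulative. Your version is slightly longer but isolates a reusable majorization principle. For positive nonincreasing sequences, domination of cumulative weights implies domination of the weighted sums, so the argument applies unchanged to any index map satisfying the same counting bound. It also shows exactly where monotonicity of $(t_k)$ and $\alpha<1$ are used.
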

\begin{proof}
We estimate
    \begin{align*}
        \sum_{i = 0}^{i^*} t_{K_i+1} &= t_{n+1} + \sum_{i = 1}^{i^*} t_{K_i+1} =t_{n+1} +\int_{0}^{i^*} t_{n+1+\lceil\alpha \lceil s\rceil \rceil} \, \dd s \leq t_{n+1} +\int_{0}^{i^*} t_{\lceil n+1+\alpha  s \rceil} \, \dd s\\
        &=t_{n+1} +\frac{1}{\alpha} \int_{n+1}^{n+1+\alpha i^*} t_{\lceil s\rceil}\, \dd s \leq t_{n+1} +\frac{1}{\alpha} \int_{n+1}^{n+1+\lceil \alpha i^*\rceil} t_{\lceil s\rceil}\, \dd s\\
        &= t_{n+1}+\frac{1}{\alpha} \sum_{k = n+2}^{K_{i^*}+1} t_k< \frac{1}{\alpha} \sum_{k = n}^{K_{i^*}} t_{k+1},
    \end{align*}
    which shows the claim.
\end{proof}

\subsection{A key estimate}
Our proof of Theorem \ref{theo:BlowUp} is based on applying Lemma \ref{lemm:BlowUpCrit} to the SDE \eqref{SDE2}.
The upper bound on the growth rate of the absolute value of a single point required for \eqref{eq:anchorBound} is relatively simple to obtain for the SDE \eqref{SDE2}, see Proposition \ref{prop:anchorCond} below. Thus, the crucial condition that needs to be verified is \eqref{eq:growthBound}. For this, we express the SDE \eqref{SDE2} in polar coordinates, giving

\begin{equation}\label{newPolarSDE}
\begin{aligned}
        \dd R(t) &= \left(-c\,R(t)+\frac{\sigma^2}{2R(t)}\right) \dd t +\sigma\,[\cos(\phi(t))\,\dd W_1(t)+\sin(\phi(t))\,\dd W_2(t)],\\
    \dd \phi(t) &= -\rho(R(t))\, \dd t + \frac{\sigma}{R(t)}\,[-\sin(\phi(t)) \,\dd W_1(t) + \cos(\phi(t)) \,\dd W_2(t)],
\end{aligned}
\end{equation}
see \eqref{eq:SDE_polar}. To obtain the uniform-in-$n$ lower bound required for \eqref{eq:growthBound}, for each $n\geq 2$, we apply the affine linear transformation $R \mapsto (R-r_n)/(r_n-r_{n-1})$ to the radius. This maps $r_n \mapsto 0$ and $r_{n-1} \mapsto -1$. Furthermore, our choice for the radii $(r_m)_{m\geq 1}$ will imply that $r_{n+1}-r_n \leq r_n - r_{n-1}$, so $r_{n+1} \mapsto 1-\delta$ for some $\delta \in (0,1]$. After an additional time rescaling, this transforms \eqref{newPolarSDE} into an SDE of the form \eqref{neuesde} below. The following lemma is the key estimate that allows us to derive \eqref{eq:growthBound}.

\begin{lemma}\label{newapproach} Consider the SDE on $E=\Gamma \times S^1$
where $\Gamma \subseteq\R$ is an open interval containing $[-2,1]$, given by
\begin{align}\label{neuesde}
\begin{split}
\dd R&= -g(R)\,\dd t + \cos (\phi)\,\dd W_1(t)+\sin (\phi) \,\dd W_2(t),\\
\dd \phi &= -h(R)\,\dd t + \gamma(R) \big( -\sin (\phi)\,\dd W_1(t) + \cos (\phi)\,\dd W_2(t)\big),
\end{split}
\end{align}
where  $W_1$ and $W_2$ are independent standard Brownian motions and $\gamma \in C^3(\Gamma, \mathbb R)$ and $g,h \in C^2(\Gamma,\R)$. Let $\phi_1^0, \phi_2^0 \in S^1$ and denote by $(R_1(t),\phi_1(t))$ and $(R_2(t),\phi_2(t))$ the forward solutions to \eqref{neuesde} with $(R_1(0),\phi_1(0)) = (0,\phi_1^0)$ and $(R_2(0),\phi_2(0)) = (-1,\phi_2^0)$. For some $\delta \in [0,1)$, define three stopping times $\tau^\uparrow_1, \tau^\downarrow_1$ and $\tau^\uparrow_2$ by 
\begin{align}
    \tau^\uparrow_1 &\coloneqq \inf\{t \geq 0: R_1(t) = 1-\delta\},\nonumber\\
    \tau^\downarrow_1 &\coloneqq \inf\{t \geq 0: R_1(t) = -1\},~\text{and}\nonumber\\
    \tau^\uparrow_2 &\coloneqq \inf\{t \geq 0: R_2(t) = 1-\delta \text{ and } R_2(t')> -2, ~\forall\, t'\in [0,t]\} .\label{eq:tau2up}
\end{align}
 There exist  $\varepsilon^* \in (0,1)$ and $T^* \geq 1$, independent of $\delta, \phi_1^0, \phi_2^0, g, h,$ and $\gamma$, such that
\begin{equation}\label{onehalf}
 p^*:=\inf \P(\tau^\uparrow_1 \wedge \tau^\uparrow_2 \leq \tau^\downarrow_1 \wedge T^*)>\frac 12,
 \end{equation}
where the infimum is taken over all $\delta\in [0,1), \phi_1^0,\phi_2^0 \in S^1$ and all functions $g, h$ and $\gamma$ satisfying 
\begin{align}
    g(R) \in [0, \varepsilon^*] \text{ and } |g'(R)|, |g''(R)| \in [0, 3],&\quad \text{for all }R \in [-2,1],\label{eq:gCond}\\
    h'(R) \in \left[\frac{1}{2}, \frac{3}{2}\right] \text{ and } |h(R)|, |h''(R)| \in [0, 3],&\quad \text{for all }R \in [-2,1],\label{eq:hCond}\\
    |\gamma(R)|,|\gamma'(R)| \in [0,\varepsilon^*] \text{ and } |\gamma''(R)|,|\gamma'''(R)| \in [0,3],&\quad \text{for all }R \in [-2,1].\label{eq:gammaCond}
\end{align}

\end{lemma}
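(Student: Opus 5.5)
The plan is to reduce to a limiting system without the small coefficients, prove a strict inequality there, and then get uniformity and stability under perturbation from compactness. Setting $g\equiv 0$ and $\gamma\equiv 0$ in \eqref{neuesde} gives the \emph{reduced system}
$$\dd R_i=\cos(\phi_i)\,\dd W_1+\sin(\phi_i)\,\dd W_2,\qquad \dd\phi_i=-h(R_i)\,\dd t,\qquad i=1,2.$$
Here $R_1$ and $R_2$ are martingales with unit quadratic variation, and their correlation is $\cos(\phi_1-\phi_2)$. The angle difference is driven by $h(R_1)-h(R_2)$, which by \eqref{eq:hCond} is comparable to $R_1-R_2$.

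\textbf{Step 1 (baseline from optional stopping).} Define $\sigma:=\tau^\uparrow_1\wedge\tau^\downarrow_1$, the exit time of $R_1$ from $(-1,1-\delta)$. In the reduced system, optional stopping gives $\mathbb P(\tau^\uparrow_1<\tau^\downarrow_1)=1/(2-\delta)\geq 1/2$. Moreover $\mathbb P(\sigma>T)\leq \mathbb E[\sigma]/T\leq 4/T$, uniformly in all parameters. Equality $1/2$ occurs only at $\delta=0$, so the gain must come from the second point.

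\textbf{Step 2 (strict gain from the second point).} The success event contains the disjoint union of two events. The first is $\{\tau^\uparrow_1<\tau^\downarrow_1\}$. The second is $\{\tau^\uparrow_2<\tau^\uparrow_1,\ \tau^\uparrow_2<\tau^\downarrow_1\}\cap\{\tau^\downarrow_1<\tau^\uparrow_1\}$: $R_2$ reaches $1-\delta$ without touching $-2$, and afterwards $R_1$ goes down. I will show this second event has probability at least some $\eta>0$, uniformly over the reduced parameters, using the support theorem for the SDE $(R_1,R_2,\phi_1,\phi_2)$. The control I would use has three parts. First, wait (zero control) on a short interval, so that $h'\geq\tfrac12$ and $R_1-R_2=1$ rotate $\psi=\phi_1-\phi_2$ into a neighbourhood of $\pi$, where the two radial noises are anti-correlated. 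Second, push along the direction that moves $R_2$ up and $R_1$ down, keeping $R_1\in(-1,0)$ and $R_2\in(-2,1-\delta)$ until $R_2$ reaches $1-\delta$. Third, push $R_1$ down to $-1$. If $\psi$ starts near $0$, the first phase is needed; if it starts elsewhere, the second phase works directly after adjusting the direction. The support theorem gives positive probability of a tube around this path. The hitting events are stable under small perturbations of the tube because the radial noise is nondegenerate, so the boundaries are regular.

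\textbf{Step 3 (uniformity via compactness).} By Arzelà--Ascoli, the conditions \eqref{eq:hCond} make the admissible $h|_{[-2,1]}$ a compact family in $C^1$. Together with $\delta\in[0,1]$ and $(\phi_1^0,\phi_2^0)\in (S^1)^2$ this gives a compact parameter space. The map from parameters to the probability in Step 2 is lower semicontinuous: solutions depend continuously on coefficients, and the hitting events are open up to null boundary events. Hence $\eta$ can be taken uniform. Then I choose $T^*$ with $4/T^*<\eta/4$, which gives $p^*\geq \tfrac12+\tfrac{3\eta}{4}$ for the reduced system.

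\textbf{Step 4 (perturbation).} Finally I reinstate $g\in[0,\varepsilon^*]$ and $|\gamma|,|\gamma'|\leq\varepsilon^*$. On $[0,T^*]$, standard $L^2$ stability estimates for SDEs with coefficients bounded in $C^2$ (this is where the derivative bounds on $g,h,\gamma$ enter) show that the perturbed solution is within $o(1)$ of the reduced one in sup norm, uniformly, as $\varepsilon^*\to 0$. Hitting probabilities of the regular levels therefore change by at most $\eta/4$ for $\varepsilon^*$ small.

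The main obstacle is Step 2 together with its uniform lower semicontinuity in Step 3. The difficulty is that the set of parameters is only compact after one checks that the relevant hitting events are continuous functionals of the path, and the worst case $\delta=0$ leaves no slack from the baseline. If the support-theorem argument proves awkward, the fallback is a direct computation: after a deterministic waiting time, $\cos\psi\leq 1-c$ with probability bounded below, and on that event the two-dimensional martingale $(R_1,R_2)$ is genuinely non-degenerate and has positive probability of exiting through the edge where $R_2$ reaches $1-\delta$ first.
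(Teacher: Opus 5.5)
Your overall architecture matches the paper's: a baseline of about $1/2$ from $R_1$ alone, plus a disjoint event of uniformly positive probability in which $R_2$ reaches the top first and $R_1$ then exits downward. However, the control you propose in Step~2 cannot realize that event. You wait until $\psi=\phi_1-\phi_2\approx\pi$ and then push so that $R_2$ goes up and $R_1$ goes down.

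In the reduced system, any control $u$ gives $\frac{\dd}{\dd t}(R_1+R_2)=\langle e(\phi_1)+e(\phi_2),u\rangle$, where $e(\phi)=(\cos\phi,\sin\phi)$ and $|e(\phi_1)+e(\phi_2)|=2|\cos(\psi/2)|$. So near $\psi=\pi$ the sum $R_1+R_2$ is essentially frozen at its initial value $-1$. Along the natural direction $u=-v\,e(\phi_1)$ it even decreases, at rate $v(1+\cos\psi)\ge0$, for every $\psi$. But at time $\tau^\uparrow_2$ your event requires $R_2=1-\delta$ and $R_1>-1$, i.e.\ $R_1+R_2>-\delta>-1$. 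Under an anti-correlated push, $R_1$ reaches $-1$ when $R_2$ is only near $0$.

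Anti-correlation is as degenerate as perfect correlation: the covariance of $(R_1,R_2)$ has determinant $\sin^2\psi$. The useful configuration is $\psi\approx\pm\pi/2$. There, pushing along $e(\phi_1)^\perp$ leaves $R_1$ untouched and raises $R_2$ at rate $|u|\,|\sin\psi|$. This is exactly the paper's control. With zero noise the angle difference rotates at rate close to $h(0)-h(-1)\in[\tfrac12,\tfrac32]$ (exactly so in your reduced system). So one waits until the time $\mathcal T<4\pi$ at which it equals $\pi/2$, then drives $\overline W_2=36(t-\mathcal T)_+$ with $\overline W_1\equiv0$. Your fallback has the same defect: $\cos\psi\le 1-c$ does not give non-degeneracy; you need $|\cos\psi|\le1-c$.

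A second, smaller error is in Step~3. You put $\delta\in[0,1]$ into the compact parameter set, but the probability of your Step-2 event tends to $0$ as $\delta\to1$, since then $\tau^\uparrow_1\to0$. So no uniform $\eta$ exists on that set. Reduce first to $\delta=0$, using that $\tau^\uparrow_1$ and $\tau^\uparrow_2$ are decreasing in $\delta$, as the paper does.

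After these repairs, your route to uniformity differs from the paper's. You use compactness of the closure of the coefficient family, continuity of the probabilities in the coefficients, and a perturbation off $g=\gamma=0$. This obliges you to verify that the hitting events are continuity sets of the limiting law, including the untimed requirement that $R_1$ later exits at $-1$. The paper avoids all of this in three moves:
\begin{itemize}
\item It applies the quantitative It\^o--Lyons estimate (Theorem~\ref{RoughDE}) directly to the full system. Its constant depends only on coefficient bounds and on the compact family of controls $\{\mathbf U_T\}$.
\item It applies that estimate to a robust sub-event: $|R_1|\le\tfrac13$ on $[0,13]$ and $R_2$ reaching $1+\eta$.
\item It obtains the later downward exit of $R_1$ with conditional probability at least $\tfrac13$ from the supermartingale property. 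The baseline comes from the one-dimensional diffusion $R_1$ alone.
\end{itemize}
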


\begin{remark}\label{Upsilon}

Note that if $R_2$ reaches the value $-2$ before reaching the value $1-\delta$ then the set in \eqref{eq:tau2up} is empty and by convention $\tau^\uparrow_2 = \infty$. This is such that the probability in \eqref{onehalf} depends exclusively on the values of $g, h$ and $\gamma$ on the interval $[-2,1]$.There exists as $\Upsilon>0$ such that, for any set of functions $g,h,\gamma$ satisfying (\ref{eq:gCond}--\ref{eq:gammaCond}), there exists a set of functions $\tilde g,\tilde h,\tilde \gamma$ defined on $\R$ that agree with $g, h, \gamma$ on $[-2,1]$ and for which $|\tilde g|,|\tilde g'|,|\tilde g''|, |\tilde h|, |\tilde h'|,|\tilde h''|, |\tilde \gamma|,|\tilde \gamma'|,|\tilde \gamma''|,|\tilde \gamma'''|$ are globally bounded by $\Upsilon \geq 3$.
By the previous observation, the probabilities in \eqref{onehalf} for $g,h,\gamma$ and $\tilde g,\tilde h,\tilde \gamma$ are identical.
Thus, we may, without loss of generality, consider only the case $\Gamma = \R$ and only functions $g,h, \gamma$ for which the absolute values and the absolute values of all relevant derivatives are globally bounded by $\Upsilon$. In particular, this ensures that all coefficients in \eqref{neuesde} are globally Lipschitz continuous and \eqref{neuesde} is strongly complete. Additionally, this property will be convenient for the rough-path-theory estimates employed later.

\end{remark}

\begin{proof}[Proof of Lemma \ref{newapproach}] 
Note that $\tau^\uparrow_1$ and $\tau^\uparrow_2$ are decreasing in $\delta$, so the probability in \eqref{onehalf} is increasing in $\delta$. Thus, we may additionally assume $\delta= 0$ without loss of generality.

The idea of the proof is as follows: For $0<\varepsilon_1\ll 1$ small and $g(R) \in [0,\varepsilon_1]$, the process $R_1(t)$ is a standard Brownian motion with a small negative drift, so its first exit of the interval $[-1,1]$ is via 1 with probability only slightly below 1/2. 
Furthermore, if $ T_1 \gg 1$ is large, this probability of this happening before time $T_1$ is still close to 1/2. More precisely, setting $$p_1(\varepsilon_1, T_1) \coloneqq \inf \mathbb P\left(\tau^\uparrow_1 \leq \tau^\downarrow_1 \wedge T_1\right),$$ where the infimum is over all functions $g$ with $g(R) \in [0,\varepsilon_1]$ for all $R \in [-2,1]$, we have
\begin{equation}\label{eq:p1Lim}
    \lim_{\varepsilon_1 \to 0} \,\lim_{T_1\to \infty} p_1(\varepsilon_1,T_1)= \frac{1}{2}.
\end{equation}
Additionally, we consider the probability 
$p_2$ given by $$p_2(\varepsilon_2, T_2) \coloneqq \inf \mathbb P\left(\tau^\uparrow_2 \leq \tau^\downarrow_1\wedge T_2 \text{ and } \tau^\downarrow_1 < \tau^\uparrow_1\right),$$ where the infimum is as in \eqref{newapproach} but with $\varepsilon^*$ replaced by $\varepsilon_2$.

The heart of the proof of Lemma \ref{newapproach} is to show that there exists $\varepsilon^*_2>0$ and $T^*_2>0$ such that $p_2(\varepsilon^*_2, T^*_2) >0$.
Then, by \eqref{eq:p1Lim}, there exists an $\varepsilon_1^*$ and a $T_1^*$ such that $p_1(\varepsilon_1^*, T_1^*) > 1/2 - p_2(\varepsilon^*_2, T^*_2)$. Since both $p_1$ and $p_2$ are decreasing in $\varepsilon_1$, resp.~$\varepsilon_2,$ and increasing in $T_1$, resp.~$T_2$, we can choose $\varepsilon^* \coloneqq \varepsilon_1^* \wedge \varepsilon_2^*$ and $T^* \coloneqq 
T_1^* \vee T_2^*$ and get 
$$p_1(\varepsilon^*, T^*) + p_2(\varepsilon^*, T^*) >1/2.$$
Since the events for $p_1$ and $p_2$ are disjoint and both contained in the event for $p^*$ considered in \eqref{onehalf}, we can conclude
$$p^*\geq p_1(\varepsilon^*, T^*) + p_2(\varepsilon^*, T^*) >1/2.$$

Thus it remains to find $\varepsilon_2^*>0$ and $T_2^*>0$ such that $p_2(\varepsilon^*_2, T^*_2) >0$. For this we use the estimate
\begin{align*}
p_2(\varepsilon_2,T_2)&\geq \inf \P\big(\tau^\uparrow_2 \leq T_2,\, |R_1(s)|\leq \tfrac{1}{3}~ \forall\, s \in [0,T_2], \text{ and } \tau^\downarrow_1 < \tau^\uparrow_1  \, \big)\\
&\geq \frac 13\inf \P\big(\tau^\uparrow_2 \leq T_2 \text{ and }  |R_1(s)|\leq \tfrac{1}{3}~ \forall\, s \in [0,T_2]\, \big) \eqqcolon \frac{1}{3}\, \tilde p_2(\varepsilon_2,T_2),
\end{align*}
where the infimum is as before. The second inequality follows from the fact that the events $\{\omega:\tau^\uparrow_2 \leq T_2\}$ and $\{\omega:|R_1(s)|\leq \tfrac{1}{3},~ \forall\, s \in [0,T_2]\}$ are measurable with respect to $\mathcal F_{0,T_2}$ and $R_1(t)$ stopped at $\tau^\downarrow_1 \wedge \tau^\uparrow_1$ is a supermartingale which satisfies
$$\mathbb P \big( \tau^\downarrow_1 < \tau^\uparrow_1\,\big|\,\mathcal F_{0,T_2}\big)\geq \frac{1-R_1(T_2)}{2} \geq \frac{1}{3}, \quad \text{for all } \omega \text{ such that }|R_1(s)|\leq \tfrac{1}{3}~ \forall\, s \in [0,T_2].$$

The following lemma shows that there exists an $\varepsilon_2^*>0$ such that, for $T_2^*=13$, we have $\tilde p_2(\varepsilon_2^*, T^*_2) > 0$ and thus $p_2(\varepsilon_2^*, T^*_2) > 0$, completing the proof.
\end{proof}

\begin{lemma}\label{Key}
Assume the setting of Lemma \ref{newapproach} with $\delta = 0$. There exists $\varepsilon>0$  such that
\begin{equation}\label{key}
\tilde p_2\big(\varepsilon
\big)\coloneqq \tilde p_2(\varepsilon, 13)=\inf \P\Big(\tau^\uparrow_2 \leq 13,\text{ and } |R_1(t)|\leq \tfrac 13 ~ \forall\, t \in [0,13]\Big)>0,
\end{equation}
where the infimum is taken over all $\phi_1^0,\phi_2^0 \in S^1$, and $g,h,\gamma$ satisfying (\ref{eq:gCond}--\ref{eq:gammaCond}) with $\varepsilon^*$ replaced by $\varepsilon$. 
\end{lemma}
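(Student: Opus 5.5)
The plan is a uniform Stroock--Varadhan support theorem argument. I first build a deterministic control path that performs the required manoeuvre in the noiseless skeleton system, with a margin. Then I show that the Brownian path stays close enough to this control, in rough path sense, with a probability bounded below uniformly over all admissible $\phi_1^0,\phi_2^0,g,h,\gamma$. By Remark \ref{Upsilon} I may take $\Gamma=\R$ and assume $g,h,\gamma$ and the relevant derivatives are globally bounded by $\Upsilon$. Then \eqref{neuesde} is an SDE with $C^2_b$/$C^3_b$ coefficients, uniformly in the parameter class. I rewrite it in Stratonovich form. The correction terms are built from $\gamma,\gamma'$ and the angular functions, and are $O(\varepsilon)$ by \eqref{eq:gammaCond}. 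The skeleton for a control $u\in H^1([0,13],\R^2)$ is therefore
\[
\dot R_i=\langle e(\phi_i),\dot u\rangle - g(R_i)+O(\varepsilon),\qquad \dot\phi_i=-h(R_i)+\gamma(R_i)\langle e(\phi_i)^\perp,\dot u\rangle+O(\varepsilon),
\]
with $e(\phi)=(\cos\phi,\sin\phi)$. The coupling between the two points enters only through the common control $\dot u$.

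\textbf{Step 1 (skeleton with $\varepsilon=0$).} I construct $\dot u$ in feedback form and then freeze it as an open-loop path along the resulting trajectory. I always choose $\dot u=k(t)\,e(\phi_1)^\perp$, so $\dot R_1=0$ and $R_1\equiv 0$. Then $\dot R_2=\pm k(t)\sin(\phi_2-\phi_1)$ and $\dot\theta=h(R_1)-h(R_2)$ for $\theta=\phi_2-\phi_1$, because $\gamma=0$. While $R_2\in[-1,-\tfrac12]$, \eqref{eq:hCond} gives $\dot\theta\ge\tfrac14$. So, keeping $k=0$, I wait a time of at most $8\pi$ (rescaled if needed to fit the budget) until $|\sin\theta|\ge\tfrac12$; it is convenient instead to wait at most the time for $\theta$ to advance by $\pi$, i.e.\ $\le 4\pi<13$, and to use $k$ moderately large. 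I then switch on a large constant $k$ with the sign of $\sin\theta$, so that $R_2$ is driven from $-1$ to $1+\eta$ within a short time $s_0$. Since $|\dot\theta|\le 6$, during $s_0$ the angle $\theta$ changes by at most $6s_0$, and $|\sin\theta|\ge\tfrac14$ persists. The total time is $\le13$, $R_2$ is monotone (so it never reaches $-2$), and $\|u\|_{H^1}\le C$ with $C$ independent of the parameters. Adding $g\in[0,\varepsilon]$ and the $O(\varepsilon)$ terms perturbs the skeleton solution by $O(\varepsilon)$ in sup norm, by Gronwall with the uniform Lipschitz bound $\Upsilon$. Hence, for $\varepsilon$ small, the skeleton itself keeps $|R_1|\le\tfrac16$ on $[0,13]$, has $R_2$ reach $1+\eta/2$ before time $13$, and keeps $R_2>-\tfrac32$.

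\textbf{Step 2 (uniform support).} I lift $W$ to its Stratonovich Brownian rough path $\mathbf W$, and $u$ to its canonical lift $\mathbf u$. The solution map of the RDE is locally uniformly continuous in $\mathbf W$ in $p$-variation, $p\in(2,3)$, with a modulus depending only on $\Upsilon$ and a bound on the rough path norm. So there is $\delta>0$, independent of parameters, such that $d_{p\text{-var}}(\mathbf W,\mathbf u)<\delta$ forces the true solution within $\min(\tfrac16,\eta/4)$ of the skeleton. This yields the event in \eqref{key}, since $\tau_2^\uparrow\le13$ follows by continuity from $R_2$ exceeding $1$. It remains to show $\inf_{\|u\|_{H^1}\le C}\P(d(\mathbf W,\mathbf u)<\delta)>0$. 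By Cameron--Martin this probability is at least $e^{-C^2/2}\cdot c\,\P(\|\mathbf W\|_{p\text{-var}}<\delta')$, using the translation estimate for rough paths along Cameron--Martin directions. The small-ball probability is positive and depends on nothing.

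\textbf{Main obstacle.} The delicate points are the uniformity in Step 2 and the choice of $u$ in Step 1, namely getting $R_2$ through the region $R_2\approx R_1$, where the angle difference stops rotating. The ``wait until $|\sin\theta|$ is large, then push fast'' device handles the latter. The uniform continuity of the Itô--Lyons map over the $\Upsilon$-bounded coefficient class handles the former. I expect the rough-path bookkeeping, including the Stratonovich correction and the Cameron--Martin shift of the rough path metric, to be the main technical effort.
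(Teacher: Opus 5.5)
Your proposal is correct. Its mechanism is the paper's: wait until the shear has rotated the angle difference to where $|\sin\vartheta|$ is large, give a short strong kick along $e(\phi_1)^\perp$, and pass from the skeleton to the SDE via Lipschitz continuity of the It\^o--Lyons map, with constants depending only on $\Upsilon$.

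Two ingredients differ.

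The first is the frame. The paper rotates the noise into the frame of the first point ($\overline W$) and works with the closed three-dimensional system \eqref{umfo} for $(R_1,R_2,\vartheta)$. Its controls are therefore the explicit one-parameter family $U_T=(0,36(t-T)_+)$, and it estimates the perturbed skeleton by hand. You keep the four-dimensional two-point system driven by $W$. You freeze the feedback $\dot u=k\,e(\phi_1)^\perp$ along $\phi_1^0-h(0)t$, which is the same kick written in the original coordinates, and compare with the $\varepsilon=0$ skeleton by Gronwall. That comparison is $O(\varepsilon)$ only while $R_2\le 1$, because $g$ and $\gamma$ are known to be small only on $[-2,1]$. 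To cross $1$ with a margin you must either fix $\eta$ before $\varepsilon$ and use the $O(\eta/k)$ duration of the rest of the kick, or bound $\dot R_2$ from below directly. The paper takes the latter route, with $\eta=\varepsilon/\Upsilon$.

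The second, more substantial, difference is how uniformity of the support bound is obtained. The paper uses compactness of $\{\mathbf U_T:T\in[0,13]\}$ and a finite cover by balls of positive mass. You use Cameron--Martin, the translation estimate, and the symmetry of $\{\|\mathbf W\|<\delta'\}$ under $W\mapsto -W$. That symmetry gives $\mathbb E\bigl[e^{-\int\dot u\,\dd W}\mathds 1_{\{\|\mathbf W\|<\delta'\}}\bigr]\ge\mathbb P(\|\mathbf W\|<\delta')$, so your constant $c$ can be taken to be $1$.

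Your version gives an explicit lower bound uniform over a whole $H^1$-ball of controls. It therefore does not matter that your control depends on $\phi_1^0$ and $h(0)$ as well as on the switching time. The paper's version needs only the qualitative support theorem, but it relies on the rotating frame to keep the control family one-dimensional and compact.
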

This statement, possibly with different choices for $\varepsilon_2^*$ and $T_2^*$, would have been elementary if $R_1(t)$ and $R_2(t)$ had been independent. However, their diffusions are linked to the covariance based on the difference of angles $\phi_1(t)$ and $\phi_2(t)$. In fact, if $\phi_1(t) = \phi_2(t)$ for some $t \geq 0$, then, at that time, the noise acts identically on $R_1$ and $R_2$ and the systems preserves the order of $R_1$ and $R_2$. To show that the probability \eqref{key} is positive, $R_2$ must overtake $R_1$. Thus, a key step in the proof is to wait until the shear has separated the angles $\phi_1$ and $\phi_2$ by approximately 90°, at which point the diffusions acting on $R_1$ and $R_2$ are almost independent for a short time.
\begin{proof}

To show \eqref{key},  we represent $R_1(t)$ and $R_2(t)$ in the  form:
\begin{align}\label{umf}
\begin{split}
\dd R_1(t)&=- g(R_1(t))\,\dd t + \dd \overline{W}_1(t),\\
\dd R_2(t)&= - g(R_2(t))\,\dd t +  \cos(\vartheta(t))\, \dd  \overline W_1(t)+\sin(\vartheta(t))\,\dd \overline{W}_2(t),\\
\dd \vartheta(t) &= \big(h(R_1(t))-h(R_2(t))\big)\,\dd t\\ 
&\quad + \gamma(R_2(t))\,\big(  -\sin(\vartheta(t))\,\dd \overline W_1(t) +   \cos(\vartheta(t))\,\dd \overline W_2(t)\big)  - \gamma(R_1(t))\,\dd \overline W_2(t),    
\end{split}
\end{align}
where $\overline {W}_1$ and $\overline{W}_2$ are independent standard Brownian motions given by
\begin{align*}
\dd \overline{W}_1(t)&=\cos\big( \phi_1(t)\big)\,\dd W_1(t)   +\sin \big( \phi_1(t)\big)\,\dd W_2(t),\\
\dd \overline{W}_2(t)&=-\sin \big( \phi_1(t)\big)\,\dd W_1(t)  +\cos\big( \phi_1(t)\big)\,\dd W_2(t) 
\end{align*}
and $\vartheta(t):=\phi_2(t)-\phi_1(t)$.\\  

It will be convenient to represent the system \eqref{umf} in Stratonovich form: 
\begin{align}\label{umfo}
\begin{split} 
\dd R_1(t)&=-  g(R_1(t))\,\dd t + \dd \overline W_1(t),\\
\dd R_2(t)&= \Big(- g(R_2(t))  -\frac 12 \gamma(R_2(t)) +\frac 12 \gamma(R_1(t))\cos (\vartheta(t))\Big)\,\dd t        \\
& \quad +\cos(\vartheta(t))\circ \dd \overline W_1(t)+\sin(\vartheta(t))\circ \dd  \overline W_2(t),\\
\dd \vartheta(t) &= \Big(h(R_1(t))-h(R_2(t)) -\frac{1}{2}\,\gamma(R_1(t))\,\gamma(R_2(t))\,\sin(\vartheta(t))\Big)\,\dd t\\
&\quad + \gamma(R_2(t))\,\big(  -\sin(\vartheta(t))\circ\dd \overline W_1(t) +   \cos(\vartheta(t))\circ\dd \overline W_2(t)\big)  - \gamma(R_1(t))\circ\dd \overline W_2(t).   
\end{split}   
\end{align}

Define 
\begin{equation}\label{eq:Tdef}
    \mathcal T:=\inf\Big\{t \geq 0:\vartheta(0)+\big(h(0)-h(-1)\big)t = \frac \pi 2 \mod 2\pi\Big\}.
\end{equation}
The value $\mathcal T$ is chosen so that at time $\mathcal T$ the angular difference $\vartheta_\mathcal T$ mod $2\pi$ is close to  
$\pi/2$ with not too small probability  and so the noise in the radial direction is sufficiently independent for the two trajectories at time $\mathcal T$. 
Recalling the assumptions on $h$, note that $\mathcal T$ depends on the initial value $\vartheta(0)\in [0,2\pi)$ and that $\mathcal T \in [0,4\pi)$, so $\mathcal T+\frac{1}{12} < 13$.

In order to  show the required lower bound for $\tilde p_2$ we consider the (deterministic) solution $(\hat R_1, \hat R_2, \hat \vartheta)$ of 
\eqref{umfo} with $\overline W_1$ replaced by 0 and $\overline W_2$ replaced by the path that takes the value 0 on $[0,\mathcal T]$ and the value $36(t-\mathcal T)$ for $t \geq \mathcal T$. Observe that in this way the process 
$\hat R_2(t)$ experiences a strong upward kick after time $\mathcal T$ causing $\hat R_2(t)$ to pass $\hat R_1(t)$. We must ensure that the solution of the (random) equation \eqref{umfo} remains close to $(\hat R_1, \hat R_2, \hat \vartheta)$ on $[0,\mathcal T]$ with probability bounded away from 0 uniformly with respect to the choice of the coefficients. To obtain such bounds we use results from rough paths theory. 

\textbf{Step 1 -- Deterministic Control via Rough Paths Theory:}
Let $\overline {\bf W}=(\overline {W}, \overline{\mathbb{W}})$ be the geometric rough path obtained from the 2-dimensional Brownian motion $\overline W$ enhanced with $\overline{\mathbb{W}}:= \overline{\mathbb{W}}^{\textrm{Strat}}$ (see \cite[Proposition 3.5]{FrizHairer1}). 
For some $\alpha \in \big(\frac 13,\frac 12\big)$,  which we fix for the remainder of the proof, we consider $\overline {\bf W}$ as a random element of the space ${\mathscr{C}}_{\operatorname g,0}^{0,\alpha}\big([0,13],\R^2\big)$ of $\alpha$-H\"older geometric rough paths starting at 0 and equipped with the $\alpha$-H\"older rough path metric $\varrho = \varrho_{\alpha,13}$ (see \cite[Definition 2.3]{FrizHairer1} or \cite[Definition 2.4]{FrizHairer2}). The letter $\operatorname g$ in ${\mathscr{C}}_{\operatorname g,0}^{0,\alpha}$ stands for {\em geometric} and has nothing to do with the function $g$.
For each  $T \in [0,13]$, the support of the law of $\overline {\bf W}$  contains the geometric rough path ${\bf U}_T=\big(U_T,\mathbb{U}_T\big)$ generated by $U_T(t)=(0,36(t-T)_+)^\top\coloneqq(0,0 \vee 36(t-T))^\top$, $t \in [0,13]$, defined as
$$U_T(s,t) = U_T(t)-U_T(s) \quad \text{and} \quad \mathbb U_T(s,t) = \frac{1}{2}U_T(s,t) \otimes U_T(s,t),$$
see \cite[Exercise 9.6]{FrizHairer1} or \cite[Theorem 13.54]{FrizVictoir}. Following the discussion above, we are particularly interested in $\mathbf U_{\mathcal T}$ for $\mathcal T$ defined in \eqref{eq:Tdef}. Since the set $\mathcal U \coloneqq \{\mathbf U_T : T \in [0, 13]\}$ is compact in ${\mathscr{C}}_{\operatorname g,0}^{0,\alpha}\big([0,13],\R^2\big)$, for each $D>0$, it can be covered with finitely many balls of radius $D/2$ and center in $\mathcal U$. Such balls have a positive mass under the law of $\overline{\bf W}$ and hence, for every $D>0$, we have
\begin{equation}\label{Kuh}
    Q(D):=\inf_{T \in [0,13]}\P \big(\varrho\big( \overline {\bf W}   , {\bf U}_T   \big)\leq D\big)>0.
\end{equation}
 The {\em It\^o-Lyons map} maps a rough path ${\bf V}=(V,\mathbb{V})$ in ${\mathscr{C}}_{\operatorname g,0}^{0,\alpha}\big([0,13],\R^2\big)$ to the solution $ Y^{\bf V}=(R_1^{\bf V}, R_2^{\bf V},\vartheta^{\bf V} )$ of the rough differential equation (RDE) associated to \eqref{umfo}, with $\overline {\bf W}=(\overline {W}, \overline{\mathbb{W}})$ replaced by ${\bf V}=(V, \mathbb{V})$ and with initial condition $(R_1^{\bf V}(0), R_2^{\bf V}(0),\vartheta^{\bf V}(0)) = (0,-1, \phi_2^0-\phi_1^0)$ as above. This map is continuous with respect to $\varrho$ (see \cite[Theorem 8.5]{FrizHairer1} or \cite[Theorem 8.5]{FrizHairer2}) but we need more detailed information on how the modulus of continuity depends on $g$, $h$, $\gamma$, and $\vartheta(0) = \phi_2^0-\phi_1^0$. 
Such a quantitative continuity result is Theorem \ref{RoughDE} which is a special case of \cite[Theorem 12.10]{FrizVictoir}. 
 In order to apply Theorem \ref{RoughDE}  we  extend the range of $\vartheta$ from $S^1$ to $\R$ in a periodic manner and only consider the case $\Gamma = \R$, see Remark \ref{Upsilon}. We have
 \begin{equation}\label{eq:itoLyonsStab}
     \sup_{0 \leq t \leq 13} |Y^{\bf V}(t)-Y^{{\bf U}_{\mathcal T}}(t)| \leq C  \varrho ({\bf V},{\bf U}_{\mathcal T}),
 \end{equation}
 where $C$  depends only on $\alpha$, upper bounds for
  $\varrho ({\bf V},{\bf 0})$, $\varrho ({\bf U}_{\mathcal T},{\bf 0})$, and on an upper bound for the supremum norms of $|g|,|g'|,|g''|,|\gamma|,|\gamma'|,|\gamma''|,|\gamma'''|$, $|h|$, $|h'|$, and $|h''|$. By Remark \ref{Upsilon} we can assume, without loss of generality, that the latter are upper bounded by some universal constant $\Upsilon \geq 3$.
    Since $\mathcal U \coloneqq \{\mathbf U_T : T \in [0, 13]\}$ is compact, we have $\sup_{T \in [0, 13]}\varrho(\mathbf U_T, \mathbf 0) <\infty$ and the constant in \eqref{eq:itoLyonsStab} can be chosen to depend only on $\varrho({\bf V},{\bf 0})$. In particular, we obtain
  \begin{equation}\label{estim}
  \sup_{0 \leq t \leq 13} |Y^{\bf \overline{W}}(t)-Y^{{\bf U}_{\mathcal T}}(t)| \leq C(\varrho({\bf \overline{W}},{\bf 0}))\,\varrho ({\bf \overline{W}},{\bf U}_{\mathcal T}),
  \end{equation}
  where $C$ is increasing in its argument. For $\varrho ({\bf \overline{W}},{\bf U}_\mathcal T) \leq 1$, we have
  $$C(\varrho ({\bf \overline{W}},{\bf 0})) \leq C(\varrho ({\bf U}_\mathcal T,{\bf 0})+1).$$
  In particular, for every $\eta >0$, there exists a $D(\eta)\in (0, 1]$, depending only on $\eta$, such that
  $$\varrho ({\bf \overline{W}},{\bf U}_\mathcal T) \leq D(\eta) \quad \Rightarrow \quad \sup_{0 \leq t \leq 13} |Y^{\bf \overline{W}}(t)-Y^{{\bf U}_\mathcal T}(t)| \leq \eta.$$
  Since $Y^{\bf \overline {W}}$ coincides with the Stratonovich solution $Y = (R_1, R_2, \vartheta)$ of  \eqref{umfo} (see \cite{FrizVictoir}), we have
  \begin{align*}
      \mathbb P\left(\sup_{0 \leq t \leq 13} |Y(t)-Y^{{\bf U}_\mathcal T}(t)| \leq \eta\right) &= \mathbb P\left(\sup_{0 \leq t \leq 13} |Y^{\bf \overline{W}}(t)-Y^{{\bf U}_\mathcal T}(t)| \leq \eta\right)\\
      &\geq \mathbb P \left(\varrho ({\bf \overline{W}},{\bf U}_\mathcal T) \leq D(\eta)\right) \geq Q(D(\eta)),
  \end{align*}
  where the right-hand side is positive and depends only on $\eta$. 
  
      For simplicity, we write $$Y^{{\bf U}_\mathcal T}=(R_1^{{\bf U}_\mathcal T}, R_2^{{\bf U}_\mathcal T},\vartheta^{{\bf U}_\mathcal T}) \eqqcolon (\hat R_1, \hat R_2, \hat \vartheta).$$
Our goal is to show that there exist universal constants $0<\varepsilon<1$ and $0 < \eta < \frac{1}{6}$ such that for $g, h$ and $\gamma$ satisfying (\ref{eq:gCond}--\ref{eq:gammaCond}) with $\varepsilon^*$ replaced by $\varepsilon$, we have
 \begin{align}
      |\hat R_1(t)| \leq \frac{1}{6},~ \forall \, t\in [0,13], \label{eq:R1U}\\
      \exists\, t' \in [0, \tfrac{1}{12}], \text{ s.t. }\hat R_2(\mathcal T+t') = 1+ \eta \quad \text{and}\quad  \hat R_2(t) \geq -\frac{11}{6},\quad\forall\, t\in [0, \mathcal T+t'].\label{eq:R2U}
  \end{align}
Once this is established, we have the implication
$$\sup_{0 \leq t \leq 13} |Y(t)-Y^{{\bf U}_\mathcal T}(t)| \leq \eta\quad \Rightarrow\quad \tau^\uparrow_2 \leq 13,\text{ and } |R_1(t)|\leq \tfrac 13 ~ \forall\, t \in [0,13],$$
and thus
$$\P\Big(\tau^\uparrow_2 \leq 13,\text{ and } |R_1(t)|\leq \tfrac 13 ~ \forall\, t \in [0,13]\Big)\geq Q(D(\eta))>0.$$
Since $Q(D(\eta))$ is independent of $g,h,\gamma,\phi_1^0$ and $\phi_2^0$, the infimum in \eqref{key} is positive as well. 
Thus it remains to prove \eqref{eq:R1U} and \eqref{eq:R2U} for appropriately chosen $\varepsilon$ and $\eta$.
  \paragraph{Step 2 -- Estimates for the Deterministic Trajectory:}

We have
    \begin{align*}
\begin{split} 
\dd \hat R_1(t)&=-  g(\hat R_1(t))\,\dd t\\
\dd \hat R_2(t)&= \Big(- g(\hat R_2(t))  -\frac 12 \gamma(\hat R_2(t)) +\frac 12 \gamma(\hat R_1(t))\cos (\hat \vartheta(t))\Big)\,\dd t   +\mathds 1_{t \geq \mathcal T}\,36\sin(\hat \vartheta(t)) \,\dd t,\\
\dd \hat \vartheta(t) &= \Big(h(\hat R_1(t))-h(\hat R_2(t)) -\frac{1}{2}\,\gamma(\hat R_1(t))\,\gamma(\hat R_2(t))\,\sin(\hat \vartheta(t))\Big)\,\dd t\\
&\quad +\mathds 1_{t \geq \mathcal T}\,36 
\big(\gamma(\hat R_2(t))\,\cos(\hat\vartheta(t)) - \gamma(\hat R_1(t))\big)\, \dd t, 
\end{split}   
\end{align*}
with $(\hat R_1(0), \hat R_2(0), \hat \vartheta(0)) = (0,-1, \vartheta(0))$.
Clearly,
$|\hat R_1(t)| \in [0,13\, \varepsilon]$, for all $t\in [0,13]$.
Thus \eqref{eq:R1U} holds as long as $13\,\varepsilon \leq 1/6.$ For \eqref{eq:R2U}, we need to treat the time intervals $[0, \mathcal T]$ and $[\mathcal T, \mathcal T+ \tfrac{1}{12}]$ separately. For $t \leq \mathcal T$, we have $$|\hat R_2(t)+1| \in [0,2\mathcal T \varepsilon] \subseteq [0, 26\,\varepsilon].$$
Furthermore, we have
\begin{align*}
    \hat \vartheta(\mathcal T) &= \vartheta(0) + \int_0^\mathcal T\Big(h(\hat R_1(t))-h(\hat R_2(t)) -\frac{1}{2}\,\gamma(\hat R_1(t))\,\gamma(\hat R_2(t))\,\sin(\hat \vartheta(t))\Big)\,\dd t\\
    &= \vartheta(0) + (h(0) - h(-1)) \mathcal T + \int_0^\mathcal T \big(h(\hat R_1(t))-h(0)\big)\, \dd t - \int_0^\mathcal T \big(h(\hat R_2(t))-h(-1)\big)\, \dd t\\
    &\quad - \int_0^{\mathcal T}\frac{1}{2} \,\gamma(\hat R_1(t))\,\gamma(\hat R_2(t))\,\sin(\hat \vartheta(t))\,\dd t.
\end{align*}
Recall from \eqref{eq:Tdef}, that $\mathcal T$ was defined in such a way that 
$$\vartheta(0) + (h(0) - h(-1)) \mathcal T = \frac{\pi}{2},\quad \text{mod } 2\pi.$$
Since the lift of the initial value $\vartheta(0)$ from $S^1$ to $\mathbb R$ was arbitrary, we may assume without loss of generality that this equality holds in $\mathbb R$ as well.
Using the previous estimates on $|\hat R_1(t)|$ and $|\hat R_2(t)+1|$ and the fact that the derivative of $h$ is at most $3/2$ on $[-2,1]$, we get
\begin{equation}\label{eq:thetaOfT}
    \left| \hat \vartheta(\mathcal T) - \frac{\pi}{2}\right| \leq 13\, \varepsilon\, \frac{3}{2}\, \mathcal T + 26\, \varepsilon\, \frac{3}{2}\, \mathcal T + \frac{1}{2}\, \varepsilon^2\, \mathcal T\leq 767\,\varepsilon.
\end{equation}

Note that to show \eqref{eq:R2U}, we are required to control the trajectory of $\hat R_2$ after it leaves the interval $[-2,1]$ on which $g, h$ and $\gamma$ are controlled by (\ref{eq:gCond}--\ref{eq:gammaCond}). Here, the fact that $|g'|, |h'|$ and $|\gamma'|$ are globally bounded by the universal constant $\Upsilon\geq 3$ is useful. By choosing $\eta \coloneqq \varepsilon/ \Upsilon$, for $\varepsilon$ still to be determined, the intermediate value theorem gives 
$$|g(r)| \leq 2\varepsilon, \quad|h(r)| \leq 3+\varepsilon<4, \quad\text{and}\quad |\gamma(r)|\leq 2\varepsilon, \quad \forall\, r \in [-2,1+\eta].$$
Let $t^* \in (0,\tfrac{1}{12}]$ be the given by
$$t^*\coloneqq \frac{1}{12} \wedge \inf \{t\geq 0: \hat R_2(\mathcal T+ t) \notin [-2, 1+\eta]\}.$$
We  can control the trajectory of $\hat \vartheta$ on the interval $[\mathcal T, \mathcal T + t^*]$. For $t \in [0, t^*]$, we have
\begin{align*}
    \left|\hat \vartheta(\mathcal T+t) -\hat \vartheta(\mathcal T)\right| &\leq \int_\mathcal T^{\mathcal T+t}\Big|h(\hat R_1(s))-h(\hat R_2(s)) -\frac{1}{2}\,\gamma(\hat R_1(s))\,\gamma(\hat R_2(s))\,\sin(\hat \vartheta(s))\Big|\,\dd s\\
    &\quad + 36\int_\mathcal T^{\mathcal T+t} 
\big|\gamma(\hat R_2(s))\,\cos(\hat\vartheta(s)) - \gamma(\hat R_1(s))\big|\, \dd s\\
&\leq \frac{1}{12} \,(4+4+2\varepsilon^2) + \frac{36}{12}\cdot 4\varepsilon \leq \frac{2}{3} + 13 \varepsilon.
\end{align*}
Together with \eqref{eq:thetaOfT}, and requiring $\varepsilon$ to be very small ($\varepsilon\leq 10^{-4}$ suffices), we get
$$ \left|\hat \vartheta(\mathcal T+t) -\frac{\pi}{2}\right| \leq \left|\hat \vartheta(\mathcal T) -\frac{\pi}{2}\right| +\left|\hat \vartheta(\mathcal T+t) -\hat \vartheta(\mathcal T)\right| \leq \frac{2}{3} + 780\,\varepsilon \leq \frac{\pi}{4}, \quad \forall \, t \in [0, t^*].$$
In particular, we have $\sin(\hat \vartheta(t)) \geq \sqrt 2/2$ for $t \in [\mathcal T, \mathcal T + t^*]$.
This bound allows us to finally control the trajectory of $\hat R_2$ on the interval $[\mathcal T, \mathcal T + t^*]$. For $t \in [0, t^*]$, we have
\begin{align*}
    \hat R_2(\mathcal T+ t) &\geq \hat R_2(\mathcal T) + \int_\mathcal T^{\mathcal T+t}\Big(- g(\hat R_2(s))  -\frac 12 \gamma(\hat R_2(s)) +\frac 12 \gamma(\hat R_1(s))\cos (\hat \vartheta(s))\Big)\,\dd s \\
    &\quad +36\int_\mathcal T^{\mathcal T+t}\sin(\hat \vartheta(s)) \,\dd s\\
&\geq -1-26\,\varepsilon - \frac{1}{12}\cdot 4\,\varepsilon +36 \, t \, \frac{\sqrt{2}}{2} \geq -1 - 27 \,\varepsilon + 25\, t.
\end{align*}
For $\varepsilon$ sufficiently small, we can deduce that $\hat R_2(t)\geq -11/6$ during $[\mathcal T, \mathcal T+t^*].$ Furthermore, if $\varepsilon$ is small enough such that $-1-27\varepsilon +25/12 > 1+ \varepsilon/3 \geq 1+\eta$, by the definition of $t^*$, we know that $t^* < 1/12$, and hence $\hat R_2(\mathcal T + t^*) = 1+\eta$. This establishes \eqref{eq:R2U} and thereby the proof of Lemma \ref{Key} and Lemma \ref{newapproach}.
\end{proof}
\subsection{Proof of Theorem \ref{theo:BlowUp}}
In this subsection, we complete the proof of Theorem \ref{theo:BlowUp} by verifying the conditions of Lemma \ref{lemm:BlowUpCrit} for the RDS $\varphi$ induced by our main SDE of consideration, namely
\begin{equation}\label{SDE3}
    \dd X(t)=-X(t)\,\dd t+\rho(|X(t)|) \begin{pmatrix}0&1\\-1&0 \end{pmatrix}\,X(t)\,\dd t+\dd W(t).
\end{equation}

For \eqref{eq:growthBound}, we will use the key estimate derived in Lemma \ref{newapproach}, and for \eqref{eq:anchorBound}, we will use the following proposition.

\begin{proposition}\label{prop:anchorCond}
    Let $\varphi$ be the RDS given by \eqref{SDE3}. Assume that, in the setting of Lemma \ref{lemm:BlowUpCrit}, the sequences $(r_n)$, $(t_n)$, and the constant $\alpha$ are chosen in such a way that the function $m$ defined in \eqref{eq:Growthrate} satisfies
    $$r_{m(n,t)} \geq c_n \,e^{ct}, \quad \forall\, n \in \mathbb N, t>0$$
    for some $c>0$ and some sequence of real numbers $c_n$ with $c_n\to \infty$. 
    
    Then \eqref{eq:anchorBound} holds, that is,
    \begin{equation}\label{eq:anchorBound2}
            \lim_{n \to \infty}\inf_{x\in B_1}\mathbb P(|\varphi(t, \omega,x)| \leq r_{m(n,t)}, ~\forall\,t\in \mathbb R_{>0}) = 1.
        \end{equation}
\end{proposition}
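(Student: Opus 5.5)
The plan is to reduce \eqref{eq:anchorBound2} to a pathwise, initial-condition-independent bound on a single trajectory by means of Lemma \ref{lemm:contraction}. For $x\in B_1$ and $X(t)=\varphi(t,\omega,x)$, the random rotation $Y(t)=\mathcal R(\chi_X(t))X(t)$ satisfies $\dd Y=-Y\,\dd t+\dd\widetilde W$ with $\widetilde W(t)=\int_0^t\mathcal R(\chi_X(s))\,\dd W(s)$. Since $\mathcal R(\chi_X)$ is an adapted orthogonal matrix, L\'evy's characterization shows that $\widetilde W$ is a standard planar Brownian motion. Hence $|X(t)|=|Y(t)|$, and $Y$ is an Ornstein--Uhlenbeck process started at $\mathcal R(0)x=x$. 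In particular the law of $(|X(t)|)_{t<\tau^\star(x)}$ coincides with that of the norm of an OU process $Z^x$ started at $x$. (Weak completeness also follows, so $\tau^\star(x)=\infty$ a.s.) Only a single point is involved, so no uniformity over uncountably many $\chi$ is needed, unlike in \eqref{eq:QDef}.

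It therefore suffices to show that
\[
\sup_{x\in B_1}\mathbb P\bigl(\exists\, t>0:\ |Z^x(t)|>c_n e^{ct}\bigr)\to 0 .
\]
Write $Z^x(t)=e^{-t}x+Z^0(t)$. Then $|Z^x(t)|\le 1+|Z^0(t)|$, so the $x$-dependence disappears. Define $M:=\sup_{t\ge0}(1+|Z^0(t)|)e^{-ct}$. The event above is contained in $\{M>c_n\}$, so it remains to prove that $M<\infty$ almost surely. Then $\mathbb P(M>c_n)\to0$ because $c_n\to\infty$.

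To show $M<\infty$ a.s., I would use the standard logarithmic growth of the OU process, as in \eqref{eq:Bound_OU}: $|Z^0(t)|^2\le L+J\ln(1+t)$ for random finite $L,J$. An alternative is a Borel--Cantelli argument on the unit intervals $[k,k+1]$, where $\mathbb P(\sup_{[k,k+1]}|Z^0|>e^{ck/2})$ is summable by Gaussian tails together with the Markov property and stationarity. Either route gives $|Z^0(t)|=o(e^{ct})$ a.s., hence $M<\infty$.

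The only delicate point is the identification of $|\varphi(t,\omega,x)|$ with an OU norm. This holds only in law, and separately for each $x$, but that is enough because the probability in \eqref{eq:anchorBound2} concerns one fixed $x$ at a time before the infimum is taken. After that identification, the bound is uniform in $x\in B_1$ by the computation above.
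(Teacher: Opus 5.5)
Your proposal is correct and follows essentially the same route as the paper. The paper also reduces to $\rho=0$ by noting that the law of $(|\varphi(t,\omega,x)|)_{t\ge0}$ does not depend on $\rho$; your rotation and L\'evy argument via Lemma~\ref{lemm:contraction} is a clean justification of exactly this step, and it also avoids the origin caveat of the polar-coordinate picture. The paper then concludes from the growth of the Ornstein--Uhlenbeck process, written as $e^{-t}(x+\widetilde W(s(t)))$ with $s(t)=(e^{2t}-1)/2$ and controlled by the law of the iterated logarithm, while your bound $|Z^x(t)|\le 1+|Z^0(t)|$ just makes explicit the uniformity over $x\in B_1$ that the paper leaves implicit.
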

\begin{proof}
    Note that \eqref{eq:anchorBound2} depends only on the law of $|X_t|$ for individual solutions to \eqref{SDE3}. Since this law is independent of $\rho$, we may assume $\rho=0$ without loss of generality. In this case, the RDS $\varphi$ can be expressed as 
    $$ \varphi(t, \omega, x) = e^{-t}\left(x+\widetilde W(s(t),\omega)\right),$$
    where $s(t)\coloneqq (e^{2t}-1)/2$ and $\widetilde W$ is a 2-dimensional Brownian motion. Now \eqref{eq:anchorBound2} is a consequence of the law of the iterated logarithm. In fact, the proposition would still hold if the exponential growth assumption were to be replaced by an assumption of the form $r_{m(n,t)} \gtrsim \sqrt{\log(2+t)}$, see for example \cite{Chen15}.
\end{proof}

We are now in a position to prove Theorem \ref{theo:BlowUp}.
\begin{proof}[Proof of Theorem \ref{theo:BlowUp} (a)]
    For $\varepsilon^*>0$ as in Lemma \ref{newapproach}, let $K_2 \coloneqq (\varepsilon^*/2)^{-3}.$ Assume that $\rho$ satisfies the assumptions of Theorem \ref{theo:BlowUp}, that is, there exists an $r^*>0$ for which
    \begin{equation}\label{eq:noSetAttr}
        \limsup_{r \to \infty} \frac{|\rho''(r)|}{|\rho'(r)|} <\infty, \quad\quad |\rho'(r)| \geq K_2\, r^3, \quad \forall\, r\geq r^*
    \end{equation}
    and $|\rho'(r)|$ increases monotonically on $[r^*,\infty)$. In particular, this implies that $\rho'(r)$ has a constant sign on $[r^*,\infty)$.
    Without loss of generality, we assume that $\rho'(r)>0$ on $[r^*,\infty)$. If not, we consider the \eqref{SDE3} with $\rho$ replaced by $-\rho$ which corresponds to replacing $X_1$ by $-X_1$.
    
    For some large $r_1\geq r^*$, whose exact value will be determined later, we recursively define $r_{n+1}$ in such a way that
    \begin{equation}\label{eq:rRecursion}
        r_{n+1}-r_n = \rho'(r_{n+1})^{-1/3}
    \end{equation}
    holds. This is always possible and the choice is unique since $\rho'(r)^{-1/3}$ is monotonically decreasing and continuous. Note that this is the same definition we used in the proof of Theorem \ref{theo:attractors2} (a). First, we argue that $r_n \to \infty$. If not, taking the limit $n \to \infty$ in Equation \eqref{eq:rRecursion} gives $\rho'(r_{\lim})^{-1/3} = 0$ for the finite limit $r_{\lim} \coloneqq \lim_{n \to \infty} r_n$, which leads to a contradiction. Next, for $n\geq 2$, we set $q_n \coloneqq  \rho'(r_{n})^{-1/3} = r_n - r_{n-1}$ and $t_n = T^* \,q_n^2$, where $T^*$ is as in Lemma \ref{newapproach}. Note that $t_n$ is decreasing as required. We proceed by checking the two main assumptions \eqref{eq:growthBound} and \eqref{eq:anchorBound} of Lemma \ref{lemm:BlowUpCrit}.

    \textbf{Step 1 -- Coordinate Transformation:} For the former, we will show that the condition \eqref{eq:growthBound} holds for the $p^*>\frac{1}{2}$ given in Lemma \ref{newapproach}, that is, for all $n \geq 2$ and all $x, y \in \mathbb R^2$ with $|x|= r_n$ and $|y| = r_{n-1}$, we have
    \begin{equation}\label{ineq:growthGoal}
        \mathbb P(\tau_{n+1}(x)\wedge \tau_{n+1}(y) \leq \tau_{n-1}(x) \wedge t_n) \geq p^*,
    \end{equation}
    where $\tau_n$ and $\tau_{n-1}$ are the stopping times defined in Lemma \ref{lemm:BlowUpCrit}.
     Recall from \eqref{eq:SDE_polar} the polar coordinate representation of \eqref{SDE3}, which evolves according to
     \begin{equation}\label{sde:polar2}
    \begin{aligned}
\dd R(t)&=-\left(R(t)-\frac{1}{2R(t)}\right)\,\dd t +\cos(\phi(t))\,\dd W_1(t) + \sin(\phi(t))\,\dd W_2(t),\\
\dd \phi(t)&=-\rho(R(t))\,\dd t +\frac 1{R(t)}\big(-\sin(\phi(t))\,\dd W_1(t) + \cos(\phi(t))\,\dd W_2(t)\big).
\end{aligned}
\end{equation}
Fix some $n\geq 2$. In order to show \eqref{ineq:growthGoal}, we specifically care about the dynamics of \eqref{sde:polar2} for values of $R$ between $r_{n-1}$ and $r_{n+1}$. To study these, we introduce new coordinates that zoom in on this region. At the same time, we also slow down the time by a factor of $q_n^2$ and introduce a moving reference frame for $\phi$. The new coordinates are given by 
$$\tilde R(t) = \frac{1}{q_n} (R(q_n^2\, t)-r_n) \quad \quad \tilde \phi(t) = \phi(q_n^2 t) + q_n^2\rho(r_n)\, t.$$
In these coordinates, \eqref{sde:polar2} is given by
     \begin{equation}\label{sde:polar3}
    \begin{aligned}
\dd \tilde R(t)&=-g_n(\tilde R(t))\,\dd t +\cos(\tilde \phi(t))\,\dd \widetilde W_1(t) + \sin(\tilde \phi(t))\,\dd \widetilde W_2(t),\\
\dd \tilde \phi(t)&=-h_n(\tilde R(t))\,\dd t +\gamma_n(\tilde R(t))\big(-\sin(\tilde \phi(t))\,\dd \widetilde W_1(t) + \cos(\tilde \phi(t))\,\dd \widetilde W_2(t)\big),
\end{aligned}
\end{equation}
where the functions $g_n, h_n,$ and $\gamma_n$ are given by
\begin{align*}
    g_n(\tilde R) &\coloneqq q_n\left(r_n+q_n \tilde R-\frac{1}{2(r_n+q_n\tilde R)}\right),\\
    h_n(\tilde R) &\coloneqq q_n^2 \left(\rho(r_n+q_n\tilde R) - \rho(r_n)\right),\text{ and}\\
    \gamma_n(\tilde R) &\coloneqq \frac{q_n}{r_n+q_n\tilde R},
\end{align*}
and $\widetilde W_1, \widetilde W_2$ are independent Brownian motions that depend on $n$ but not on the trajectory $(\tilde R(t), \tilde \phi(t))$.
Note that \eqref{sde:polar3} is of the form \eqref{neuesde} studied in Lemma \ref{newapproach}. Furthermore, $R = r_{n-1}$ corresponds to $\tilde R = -1$, $R=r_n$ corresponds to $\tilde R = 0$, and $R = r_{n+1}$ corresponds to $\tilde R = 1-\delta_n$, for $\delta_n \coloneqq 1-q_{n+1}/q_n \in [0,1)$. Moreover, the time $t_n = q_n^2 \, T^*$ in \eqref{sde:polar2} corresponds to the time $T^*$ in \eqref{sde:polar3}. Thus, in order to establish \eqref{ineq:growthGoal}, it is sufficient to show that $g_n, h_n$ and $\gamma_n$ satisfy the conditions of Lemma \ref{newapproach}, given that $r_1$ was chosen sufficiently large.

\textbf{Step 2 -- Verifying (\ref{eq:gCond}--\ref{eq:gammaCond}):} First note that, for each $n\geq 2$, we have
\begin{equation}\label{ineq:rnqn}
    r_n \, q_n = r_n \, \rho'(r_n)^{-1/3} \leq \varepsilon^*/2<1/2.
\end{equation}
Fix $n\geq 2$ and $\tilde R \in [-2,1]$. Write $S \coloneqq S(n, \tilde R) \coloneqq r_n+q_n \tilde R \in [r_n-2q_n, r_n+ q_n]$. We have
$$g_n(\tilde R) = q_n\left(S-\frac{1}{2S}\right),\quad h_n(\tilde R) = q_n^2 (\rho(S)-\rho(r_n)), \text{ and}\quad \gamma_n (\tilde R) = \frac{q_n}{S}.$$
 Prescribe $r_1\geq 2$. Consequently $r_n\geq 2$ and $q_n \leq \varepsilon^*/4 < 1/4$. In particular, we have $S\geq r_n-2q_n > 3/2$. Thus $g_n(\tilde R) > 0$. Furthermore, 
$$g_n(\tilde R) \leq q_n \,S \leq q_n r_n +q_n^2 \leq \frac{\varepsilon^*}{2} + \frac{\varepsilon^{*2}}{4} \leq \varepsilon^*.$$
We verified $g_n(\tilde R) \in [0,\varepsilon^*]$. For the first two derivatives of $g_n$, we have
\begin{align*}
    |g'_n(\tilde R)| &= q_n^2\left(1+\frac{1}{2S^2}\right) < \frac{1}{16}\left(1+\frac{1}{2(3/2)^2}\right)< 3,\quad\text{and}\\
    |g''_n(\tilde R)| &= \frac{q_n^3}{S^3} < \frac{(1/4)^3}{(3/2)^3}<3.
\end{align*}
We proceed to check the conditions for $h_n$ and its derivatives. By \eqref{eq:noSetAttr}, there exists an $L>0$ and an $R^*\geq r^*$ such that $|\rho''(r)|/\rho'(r) \leq L$ for all $r\geq R^*$. In addition to the previous assumptions, prescribe $r_1 \geq (R^*+1/2) \vee 4L$ such that, in particular, $S\geq R^*$ and $q_n\, L \leq q_n\, r_n/4 < 1/8$. We have
$$h_n'(\tilde R) = q_n^3\rho'(S) = \frac{\rho'(S)}{\rho'(r_n)},$$
and thus
\begin{align*}
    |\log(h'_n(\tilde R))| &= |\log(\rho'(S))-\log(\rho'(r_n))| = \left|\int_{r_n}^S\frac{\rho''(r)}{\rho'(r)}\, \dd r\right|\\
    &\leq |S-r_n|\, L \leq 2q_n\,L  <\frac{1}{4}.
\end{align*}
Since $1/2<e^{-1/4}$ and $e^{1/4} < 3/2$, can conclude $h_n'(\tilde R) \in [1/2, 3/2]$. By the same argument, we have $|h_n'(R)| \leq 3/2$ for all $R\in [-2,1]$. Together with $h_n(0) = 0$, we can conclude $|h_n(\tilde R)| \leq 3$ by the mean value theorem. For the second derivative we have
$$|h_n''(\tilde R)| = q_n^4\, |\rho''(S)| =  q_n \frac{|\rho''(S)|}{\rho'(r_n)} \leq q_n \, L \, \frac{\rho'(S)}{\rho'(r_n)} <\frac{1}{4}\,h_n'(\tilde R) \leq \frac{3}{8}<3. $$
Finally, we check the conditions for $\gamma_n$ and its derivatives. We have
$$|\gamma_n(\tilde R)| = \frac{q_n}{S}, \quad |\gamma'_n(\tilde R)| = \frac{q_n^2}{S^2}, \quad |\gamma''_n(\tilde R)| = 2\frac{q_n^3}{S^3}, \quad \text{and}\quad |\gamma'''_n(\tilde R)| = 6\frac{q_n^4}{S^4}.$$
The required bounds follow directly from $q_n/S < (\varepsilon^*/4)/(3/2) = \varepsilon^*/6$ and $\varepsilon^*<1$. Thus, Lemma \ref{newapproach} establishes \eqref{eq:growthBound}.

\textbf{Step 3 -- Controlling the One-Point Growth:}
In order to verify \eqref{eq:anchorBound} using Proposition \ref{prop:anchorCond}, we show that $t \mapsto m(n,t)$ is growing exponentially. Using $r_{k+1}\, q_{k+1} \leq \varepsilon^*/2$, see \eqref{ineq:rnqn}, we get
$$\log(r_{k+1})-\log(r_k) = \int_{r_k}^{r_{k+1}} \frac{1}{s}\, \dd s \geq \frac{r_{k+1}-r_k}{r_{k+1}} = \frac{q_{k+1}}{r_{k+1}} \geq \frac{2}{\varepsilon^*} \, q_{k+1}^2 = \frac{2}{\varepsilon^*T^*} \, t_{k+1},$$
for all $k \in \mathbb N$. Summing this from $k=n$ to $k=m(n,t)-1$ for some $t>0,$ we obtain
\begin{align*}
    \log(r_{m(n,t)}) -\log(r_n)\geq \frac{2}{\varepsilon^*T^*} \sum_{k=n}^{m(n,t)-1}t_{k+1}.
\end{align*}
By the definition of $m(n,t)$, we have $\sum_{k=n}^{m(n,t)}t_{k+1}\geq \alpha t$ for all $t$ such that $m(n,t)<\infty.$
Thus,
\begin{align}
    \log(r_{m(n,t)}) &\geq \log(r_n) + \frac{2}{\varepsilon^*T^*} \sum_{k=n}^{m(n,t)-1}t_{k+1}\nonumber\\
    &\geq \log(r_n) + \frac{2}{\varepsilon^*T^*} (\alpha t- t_{m(n,t)})\nonumber\\
    &\geq \log(r_n) + \frac{2}{\varepsilon^*T^*} (\alpha t- t_2)\label{eq:expGrowth}
\end{align}
Thus the assumption \eqref{eq:anchorBound} follows from Proposition \ref{prop:anchorCond}.

\textbf{Step 4 -- Completing the Proof:} By Corollary \ref{cor:BlowUp}, $\varphi$ does not admit a set attractor, showing Theorem \ref{theo:BlowUp} (a.i). Also, by \eqref{eq:expGrowth}, every rapidly growing set has an exponentially growing radius. Thus Lemma \ref{lemm:BlowUpCrit} establishes Theorem \ref{theo:BlowUp} (a.iii). For Theorem \ref{theo:BlowUp} (a.ii), we need to show that every compact connected set $\mathcal M$ with at least two points has a positive probability to have an exponentially growing radius. This is not covered by Lemma \ref{lemm:BlowUpCrit}, which only establishes this property for sets $\mathcal M' \in \mathcal I_{2n}$ for some sufficiently large $n$. Using a control theoretic argument, we show in Lemma \ref{lemm:separation} in the appendix that for any two distinct points $x_1 \neq x_2 \in \mathbb R^2$ and any $n\in \mathbb N$, there is a time $T >0$ such that $|\varphi(T,\omega, x_1)|\leq1$ and $|\varphi(T,\omega, x_2)|\geq r_{2n}$ with positive probability. Thus for any compact connected set $\mathcal M$ with at least two points, there is a time $T>0$ such that there is a positive probability such that either $\tau^\star(\omega,\mathcal M) \leq T$ or $\varphi(T,\omega, \mathcal M) \in \mathcal I_{2n}$. In the former case we are done, and in the latter case we can apply Lemma \ref{lemm:BlowUpCrit} and the strong Markov property to conclude Theorem \ref{theo:BlowUp} (a.ii).

\end{proof}

\begin{proof}[Proof of Theorem \ref{theo:BlowUp} (b)]
    In addition to the assumptions of Theorem \ref{theo:BlowUp} (a), assume \eqref{eq:NoCompl}, that is,
    \begin{equation*}
\int_{r^*}^\infty \rho'(r)^{-1/3}\,\dd r<\infty,
\end{equation*}
where we assume $ \rho'(r)>0$ for $r\geq r^*$, as before, without loss of generality.  Using $t_i = T^* q_i^2$ and $q_i = r_i -r_{i-1} = \rho'(r_i)^{-1/3}$, as well as the fact that $\rho'(r)^{-1/3}$ for all $r>r_1\geq r^*$ is decreasing, we can compute
    \begin{align*}
        \sum_{i = 2}^\infty t_i = T^* \,\sum_{i = 2}^\infty q_i^2 = T^* \,\sum_{i = 2}^\infty (r_i-r_{i-1}) \rho'(r_i)^{-1/3} \leq T^* \,\int_{r_1}^\infty \rho'(r)^{-1/3}\, \dd r < \infty.
    \end{align*}
    By Corollary \ref{cor:BlowUp} (ii), $\varphi$ is not strongly complete, and for every $\varepsilon>0$, there exists an $n \in \mathbb N$, such that
    $$
            \mathbb P(\tau^\star(\omega, \mathcal M) <\varepsilon)>1-\varepsilon\quad \forall\, \mathcal M \in \mathcal I_n,
$$
establishing (b.i) and (b.iii). The remaining statement (b.ii) can be deduced using Lemma \ref{lemm:separation}, analogously to (a.ii).
\end{proof}

\section*{Acknowledgements}
The authors thank Xue-Mei Li, Franco Flandoli and Peter Friz for their insightful feedback. 

The work of DC is funded
by the Deutsche Forschungsgemeinschaft (DFG, German Research Foundation) - SPP 2410
Hyperbolic Balance Laws in Fluid Mechanics: Complexity, Scales, Randomness (CoScaRa).
ME thanks the DFG SPP 2298 Foundation of Deep Learning (FoDL), the Einstein Foundation, and the Dutch Research Council NWO (VI.Vidi.233.133) for support.

\section*{Statement on the use of artificial intelligence}
Artificial intelligence, specifically GPT-5.6 Sol, was used to assist in proof reading, to suggest minor technical computations, and to find relevant references. All results and the central ideas of their proofs were developed without the assistance of artificial intelligence. The presented manuscript is entirely human generated.
\appendix

\section{Rough differential equations}\label{app:rough}

The following theorem is an immediate consequence of \cite[Theorem 12.10]{FrizVictoir}. For unexplained notation, we refer the reader  to \cite{FrizVictoir} or  \cite{FrizHairer1} or \cite{FrizHairer2}.

\begin{theorem}\label{RoughDE} Let $\alpha \in \Big(\frac 13,\frac 12\Big)$ and $T>0$. 
Let ${\bf{U}}=(U,{\mathbb{U}})$ and ${\bf{V}}=(V,\mathbb{V})$ belong to the class $\mathscr{C}^{0,\alpha}_{g,0}([0,T],\R^d)$ of geometric rough paths starting at 0 and equipped with the  $\alpha$-H\"older rough path metric $\varrho_{\alpha, T}$. Let ${\bf Y^U}=\big(Y^{\bf U},\mathbb{Y^{\bf U}})$ be the solution of the rough differential equation
\begin{equation}\label{roughSDE}
\dd Y(t)=A(Y(t))\,\dd t+ B(Y(t))\,\dd {\bf U}(t),\quad Y(0)=y_0\in \R^e,
\end{equation}
(and similarly for ${\bf V}$ instead of ${\bf U}$), where $A\in C^2(\R^e,\R^e)$ and $B \in C^3(\R^e, \R^{e\times d})$ are such that $B$ together with its derivatives up to order 3 and $A$ together with its derivatives up to order 2 are bounded by a number $M$. 

Then
$$
\sup_{0 \leq t \leq T} |Y^{\bf V}(t)-Y^{\bf U}(t)|\leq C \varrho_{\alpha,T}(\bf{V},\bf{U}),
$$
where $C$ depends on $\alpha$,  $M$, $d$, $e$, and upper bounds of $T$, $\varrho_{\alpha,T}({\bf U},{\bf 0})$, and  $\varrho_{\alpha,T}({\bf V},{\bf 0})$ only.
\end{theorem}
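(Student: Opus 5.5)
This statement is a direct specialisation of \cite[Theorem 12.10]{FrizVictoir}, so the plan is to translate our setting into theirs and check that the constant depends only on the listed quantities. The first step is to rewrite the rough differential equation \eqref{roughSDE} without its drift term. To do so, augment the driving signal by time: set $\hat{\bf U}=(t,{\bf U})$, a geometric rough path in $\R^{1+d}$. Its cross-integrals between $t$ and $U$ are canonical Young integrals, since $t$ is Lipschitz. Correspondingly set $\hat B=(A,B)$, so that \eqref{roughSDE} reads $\dd Y=\hat B(Y)\,\dd\hat{\bf U}$.

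The regularity required by Friz--Victoir is $\mathrm{Lip}^{\gamma}$ vector fields with $\gamma>1/\alpha$. Since $\alpha>1/3$, $\gamma=3$ suffices for $B$. The drift only needs $\mathrm{Lip}^{\beta}$ with $\beta>1$ (Young regime), and $C^2$ with bounded derivatives gives this. Alternatively, I would cite the mixed rough/Young version in \cite[Theorem 12.10]{FrizVictoir} directly; it treats exactly this situation with $A\in\mathrm{Lip}^{\beta}$ and $B\in\mathrm{Lip}^{\gamma}$. The $\mathrm{Lip}$ norms are bounded by $M$.

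The second step is the metric comparison. The $\alpha$-Hölder rough path metric dominates the $1/\alpha$-variation inhomogeneous metric used in Friz--Victoir, up to a factor depending on $T$. Likewise, $\varrho(\hat{\bf U},\hat{\bf V})\lesssim\varrho({\bf U},{\bf V})$, where the implicit constant depends on $T$ and on the norms of ${\bf U}$ and ${\bf V}$; this holds because the cross-terms are bilinear Young integrals. Their local Lipschitz estimate then gives $\sup_t|Y^{\bf V}-Y^{\bf U}|\le C\,\varrho_{\alpha,T}({\bf V},{\bf U})$. The constant $C$ depends on $\alpha$, on $M$, on the dimensions, and on upper bounds of $T$ and of the norms of both paths. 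The same initial point $y_0$ enters no constant, because the bounds are global.

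The main obstacle is purely bookkeeping. One must confirm that the Friz--Victoir constant is uniform in $y_0$ and monotone in the norm bounds, so that it depends only on upper bounds of these quantities. Globally bounded derivatives yield exactly this uniformity.
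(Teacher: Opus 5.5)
Your direct route is exactly the paper's argument, which consists only of this citation. You invoke the drift version of \cite[Theorem 12.10]{FrizVictoir} for $\dd Y=B(Y)\,\dd\mathbf U+A(Y)\,\dd t$, with $B\in\mathrm{Lip}^{\kappa}$ for some $\kappa\in(1/\alpha,3]$ and the drift allowed lower regularity. You then bound the inhomogeneous $1/\alpha$-variation distance level by level by $T^{\alpha}$, respectively $T^{2\alpha}$, times the H\"older distance, and use that global bounds make the constant independent of $y_0$.

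The time-augmentation route you present as your first step, however, does not work under the stated hypotheses. Once $A$ is absorbed into $\hat B=(A,B)$ and driven by the time component of $\hat{\mathbf U}$, the driftless Lipschitz estimate needs every component of $\hat B$ to be $\mathrm{Lip}^{\kappa}$ with $\kappa>1/\alpha>2$. But $A\in C^2$ with bounded derivatives is only $\mathrm{Lip}^2$, since $D^2A$ need not be H\"older. The weaker requirement on the drift that you quote (``$\mathrm{Lip}^\beta$ with $\beta>1$'') is exactly what the with-drift theorem supplies, and it is lost after augmentation.

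This is not a technicality for the paper. The drift of \eqref{umfo} contains $g$ and $h$, which Lemma \ref{newapproach} only assumes to be $C^2$, and $h_n$ is genuinely only $C^2$ because $\rho$ is. So drop the augmentation step and rely on the drift version of the theorem alone.
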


The assumptions in the theorem are   stronger than necessary but good enough for our purpose. In particular, we only vary the driving rough path but neither the coefficients of the SDE nor the initial condition.

\section{Separation Lemma}

In this appendix, we show the following lemma, which is needed for the proof of Theorem \ref{theo:BlowUp}.

\begin{lemma}\label{lemm:separation}
Suppose that $\rho: [0,\infty) \to \mathbb R$ satisfies
\begin{equation}\label{eq:separationCondition}
    \liminf_{r \to \infty} |\rho'(r) r| = \infty.
\end{equation}
Then, for any $x_1, x_2 \in \mathbb R^2$ with $x_1\neq x_2$ and every $M >0$ there exists a time $T\geq 0$ such that
$$\mathbb P\left(|\varphi(T,\omega, x_1)|\leq 1\text{ and } \left|\varphi(T,\omega, x_2)\right|\geq M \right)>0.$$
\end{lemma}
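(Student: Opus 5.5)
The plan is to reduce the claim to deterministic control theory via the Stroock–Varadhan support theorem. Since the noise in \eqref{SDE2} is additive, the solution map $\omega\mapsto\varphi(\cdot,\omega,x)$ on $[0,T]$ is continuous in the sup norm on the event where no blow-up occurs before $T$ (this follows from the construction in Section \ref{sec:RDS} via the ODE \eqref{eq:RDE}). Therefore it suffices to produce a smooth control $u\in C^1([0,T],\mathbb R^2)$ with $u(0)=0$ such that the controlled solutions $\dot x_i=F(x_i)+\dot u$, $x_i(0)=x_i$, exist on $[0,T]$ and satisfy $|x_1(T)|<1$ and $|x_2(T)|>M$. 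Indeed, the set of Brownian paths uniformly $\delta$-close to $u$ on $[0,T]$ has positive Wiener measure. For $\delta$ small, both solutions stay uniformly close to the controlled ones by local Lipschitz continuity and Gronwall's inequality applied to \eqref{eq:RDE}, and both inequalities are open conditions. Because only two points are involved, we may concatenate controls in finitely many stages.

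The construction of the control proceeds in the following stages.
\begin{enumerate}
\item[(1)] Make the radii of the two points different, if they are not already. If $|x_1|=|x_2|$ with $x_1\neq x_2$, a short constant push $\dot u=v$ in a direction not perpendicular to $x_1-x_2$ breaks the equality. This uses that $|x+\epsilon v|$ and $|x'+\epsilon v|$ differ to first order.
\item[(2)] Translate both points rigidly by a large amount. Take a fast control $\dot u = (w/s)$ for a short time $s$; in the limit $s\to0$ this is a jump $x\mapsto x+w$, during which the drift contributes only $O(s)$. Choose $w$ so that both points land at large radii $R_1\neq R_2$ with $|R_1-R_2|\approx|x_1-x_2|\cos\angle>0$. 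The difference stays fixed under the jump.
\item[(3)] Let the shear act with zero control for a time $s'$. At radii $R_i\approx R$, the angular velocities differ by about $|\rho'(R)|\,|R_1-R_2|$. By \eqref{eq:separationCondition}, this gives an arc-length separation $R\,|\rho'(R)|\,|R_1-R_2|\,s'\to\infty$ as $R\to\infty$, while the dissipation $e^{-s'}$ shrinks the radii only by a bounded factor. Hence, for $R$ large and a suitable $s'\le 1$, we can rotate the two points until they are roughly antipodal, or at least at Euclidean distance $\ge M+1$ apart. One may simply pick $s'$ so that the angular difference equals $\pi$.
\item[(4)] Apply a second jump $w'=-x_1(\cdot)$ that places $x_1$ at the origin. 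Then $|x_2|\ge M+1$, which proves the claim for the controlled system with a margin, and the support argument above finishes the proof.
\end{enumerate}

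The main obstacle is step (3): quantitatively controlling the deterministic flow with zero control at large radius. The radius evolves by $\dot r=-r$ independently of the angle, so $r_i(t)=R_ie^{-t}$ exactly. The angle difference is then $\int_0^{s'}\bigl(\rho(R_1e^{-t})-\rho(R_2e^{-t})\bigr)\,\dd t$, which is continuous in $s'$ and vanishes at $s'=0$. We need it to reach $\pi$ at some $s'\le1$. By the mean value theorem it is bounded below by $|R_1-R_2|\int_0^{s'}e^{-t}\inf_{r\in[R e^{-1}/2,2R]}|\rho'(r)|\,\dd t$. Condition \eqref{eq:separationCondition} forces $|\rho'|$ to have a constant sign for large $r$ and to satisfy $\inf_{r\ge Re^{-1}/2}|\rho'(r)|\ge K/R$ with $K\to\infty$. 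Since $|R_1-R_2|$ stays bounded below during the jump, the integral exceeds $\pi$ once $R$ is large. By the intermediate value theorem we obtain the desired $s'$, and the two points then sit at radii $\approx R_ie^{-s'}\ge R/(2e)$ on opposite sides, giving distance $\gtrsim R/e\ge M+1$. The only delicate point is keeping $|R_1-R_2|$ bounded below after the large jump, which we handle by choosing $w$ parallel to $x_2-x_1$.
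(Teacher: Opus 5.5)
Your overall route is the paper's: a rigid translation placing $x_2-x_1$ radially at a large radius, a stretch of uncontrolled deterministic shear, a second translation sending one point to the origin, and transfer to the SDE by the support theorem. Applying the support theorem once to the concatenated control is, if anything, tidier than the paper's three conditioned steps, and your stage (1) is redundant once $w\parallel x_2-x_1$.

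\textbf{The gap.} It lies in the quantitative part of stage (3). The claim that the angular difference reaches $\pi$ for some $s'\le 1$ once $R$ is large does not follow from $\liminf_{r\to\infty}|\rho'(r)r|=\infty$. Your mean-value lower bound only gives a quantity of order $|R_1-R_2|\,K/R$ with $K=\inf_{r\ge R/(2e)}r|\rho'(r)|$. The hypothesis gives $K\to\infty$, but $K/R$ may tend to $0$. Relatedly, $\inf_{r\ge Re^{-1}/2}|\rho'(r)|\ge K/R$ is false on the unbounded range; only its version on $[Re^{-1}/2,2R]$ is available.

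Concretely, take $\rho(r)=\sqrt r$ for large $r$. Then $r\rho'(r)\to\infty$. Yet for $s'\le 1$ the angular difference is at most $|R_1-R_2|\sup_{[R/e,2R]}|\rho'|=O(|R_1-R_2|R^{-1/2})$. This tends to $0$, because $|R_1-R_2|\approx|x_1-x_2|$ is fixed. So the two points never become antipodal, and the distance $\gtrsim R/e$ is not obtained. Antipodality needs something like $|\rho'|\gtrsim 1/|x_1-x_2|$ on the relevant range. That holds in the paper's application ($|\rho'|\ge K_2r^3$), but it is not the hypothesis of the lemma.

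\textbf{The repair.} Your own arc-length heuristic already points to it, and it is exactly what the paper does: aim for a small angle of order $(M+1)/R$ instead of $\pi$.
\begin{itemize}
\item Write $d=|x_1-x_2|$, and arrange $R_1=R$, $R_2-R_1\ge d/2$, $R_2\le 2R$.
\item For $t\in[0,1]$ the angular rate is $\rho'(\xi)(R_2-R_1)e^{-t}$ with $\xi\in[R/e,R_2e^{-t}]$. Using the eventual constant sign of $\rho'$, it is at least $K'(R_2-R_1)/R_2\ge K'd/(4R)$, where $K'=\inf_{r\ge R/e}r|\rho'(r)|$.
\item Hence the angular difference, which starts at (essentially) $0$, reaches $\theta_R=\pi e(M+1)/R$ at a first time $s'\le 1$, provided $K'\ge 4\pi e(M+1)/d$ and $R\ge e(M+1)$.
\item At that time both radii are at least $R/e$. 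The law of cosines, $|Y_1-Y_2|^2=(r_1-r_2)^2+2r_1r_2(1-\cos\theta_R)$, together with $\sin x\ge 2x/\pi$ on $[0,\pi/2]$, gives $|Y_1-Y_2|\ge 2\min(r_1,r_2)\sin(\theta_R/2)\ge 2(M+1)$.
\end{itemize}
The paper carries out the same computation with the angle $\pi\tilde M/r^*$ on the time interval $[0,\log 2]$. With this replacement, your stage (4) and the support-theorem argument go through as written.
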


\begin{figure}
    \centering
    \begin{overpic}[width =0.5\linewidth] {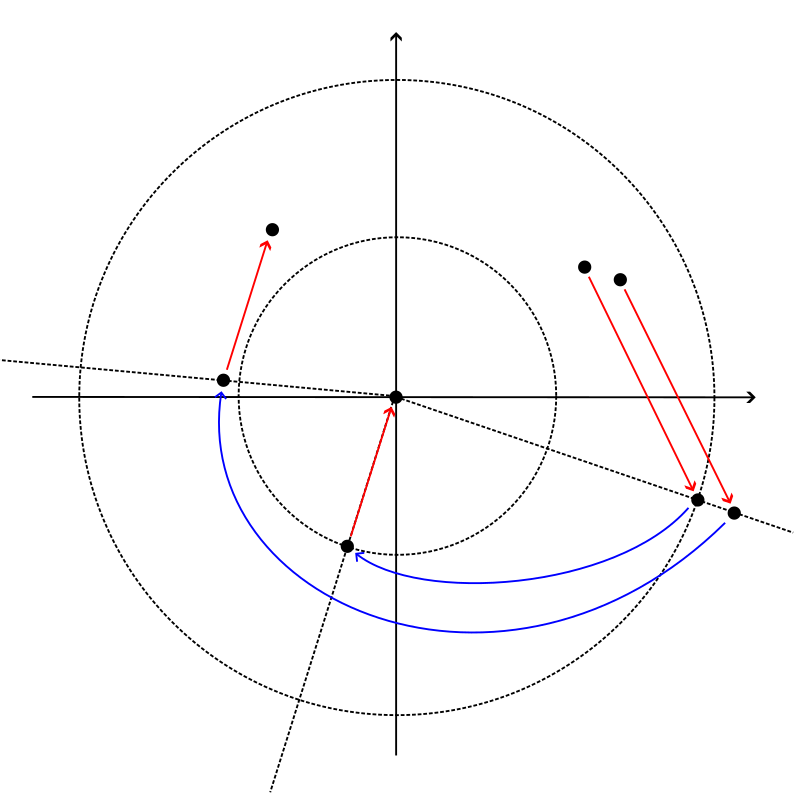}
    \put(68,68){$x_1$}
    \put(79,66){$x_2$}
    \put(82,35){$y_1$}
    \put(91,30){$y_2$}
    \put(40,34){$z_1$}
    \put(23,55){$z_2$}
    \end{overpic}
    \caption{Visualization of the proof of Lemma \ref{lemm:separation}. The blue arrows show the trajectory of the points $y_1, y_2$ described in Step 1, while the red arrows show the approximate trajectory of the points $x_1,x_2, z_1, z_2$ described in Step 2. The two dashed circles have radius $r^*/2$ and $r^*$.}
    \label{fig:separation}
\end{figure}

\begin{remark}
    An assumption similar to \eqref{eq:separationCondition} is necessary in the sense that a simple calculation shows that, for $\rho$ with $|\rho'(r) r| \leq 2$ for all $r\geq 0$, the distance between any two points is decreasing monotonically almost surely.
\end{remark}
\begin{proof}
The condition \eqref{eq:separationCondition} implies that the sign of $\rho'(r)$ is constant for sufficiently large $r$. Without loss of generality, we assume that it is eventually positive implying that $\liminf_{r \to \infty} \rho'(r) r = \infty.$ The following proof is most easily understood by referring to Figure \ref{fig:separation}.

\textbf{Step 1:} Let $\omega_0 \in \Omega$ be the constant path, such that $(\varphi(t, \omega_0, \cdot))_{t\geq 0}$ is the deterministic flow induced by the ODE 
\begin{equation}\label{eq:sepODE}
    \dot X(t)=-\,X(t)+\rho(|X(t)|) \begin{pmatrix}0&1\\-1&0 \end{pmatrix}X(t).
\end{equation}
We first show that for any $\tilde M>0$, $\delta>0$ and any angle $\phi \in [0,2\pi)$, there exists a radius $r^*>0$ and a time $\tilde T\geq 0$ such that
\begin{equation}\label{eq:sepLemmStep1Goal}
    |\varphi(\tilde T, \omega_0, y_1) - \varphi(\tilde T, \omega_0, y_2)| \geq \tilde M, \quad \text{where } y_1 \coloneqq r^* \begin{pmatrix}\cos \phi\\ \sin \phi\end{pmatrix},~y_2 \coloneqq (r^*+\delta) \begin{pmatrix}\cos \phi\\ \sin \phi\end{pmatrix}.
\end{equation}
Suppose that $\tilde M>\delta$, since otherwise $\tilde T = 0$ and any $r^*>0$ suffices. Choose $r^* \geq \tilde M>\delta$ such that
\begin{equation}\label{eq:sepLemmaShearCond}
    \rho'(r) r \geq \frac{4\pi \tilde M}{\log (2) \delta}, \quad \forall\, r\geq \frac {r^*} 2.
\end{equation}
Let $(Y_1(t))_{t\geq 0}$, $(Y_2(t))_{t\geq 0}$ be the solutions to the ODE \eqref{eq:sepODE} with initial values $Y_1(0) = y_1$ and $Y_2(0) = y_2$.
Let $R_1(t), R_2(t)$ and $\phi_1(t), \phi_2(t)$ be the polar coordinate representations of $Y_1(t), Y_2(t)$ such that
$$Y_{1/2}(t) = R_{1/2}(t) \begin{pmatrix}\cos \phi_{1/2}(t)\\ \sin \phi_{1/2}(t)\end{pmatrix}.$$
We have $R_1(0) = r^*,$ $R_2(0) = r^*+\delta$, $\phi_1(0) = \phi_2(0) = \phi$ and by \eqref{eq:SDE_polar} with $c = 1$ and $\sigma = 0$
$$\dot R_{1/2}(t) = - R_{1/2}(t), \quad \dot \phi_{1/2}(t) = -\rho(R_{1/2}(t)).$$
We can compute $R_1(t) = r^* e^{-t}$ and $R_2(t) = (r^*+\delta) e^{-t}$. In particular, for all $t \in [0,\log 2]$, we have $r^*/2 \leq R_{1/2}(t) \leq r^* +\delta \leq 2r^*$ and $R_2(t)-R_1(t) \geq  \delta/ 2$. By the mean value theorem and by \eqref{eq:sepLemmaShearCond}, we have
\begin{align*}
    \rho(R_2(t)) - \rho(R_1(t)) &\geq (R_2(t)-R_1(t))\inf_{r \in [R_1(t), R_2(t)]} \rho'(r) \\
    &\geq \frac{\delta}{2} \frac{4\pi \tilde M}{\log (2)\delta R_2} \geq \frac{\pi \tilde M}{ \log(2) r^*},
\end{align*}
for each $t\in [0,\log 2]$. Choosing compatible lifts on $\mathbb R$ of the angles $\phi_{1/2}(t) \in [0,2\pi)$, we get
$$\phi_1(\log 2) -\phi_2(\log 2) = \int_0^{\log 2} (\rho(R_2(t)) - \rho(R_1(t)))\,\dd t \geq \frac{\pi \tilde M}{ r^*}.$$
Let $\tilde T \in (0,\log 2]$ be the first time that $\phi_1(\tilde T) -\phi_2(\tilde T) = \pi \tilde M/ r^*\leq \pi$. We use the cosine law and the inequality $\sin(\pi\alpha/2) \geq \alpha, ~\forall\, \alpha \in [0,1]$, to compute
\begin{align*}
    |Y_1(\tilde T)-Y_2(\tilde T)| &= \sqrt{R_1(\tilde T)^2 + R_2(\tilde T)^2 - 2 R_1(\tilde T)R_2(\tilde T) \cos(\phi_1(\tilde T) -\phi_2(\tilde T))}\\
    &\geq \frac{r^*}{2} \sqrt{2-2 \cos\Big(\frac{\pi \tilde M}{ r^*}\Big)} = r^* \sin\Big(\frac{\pi \tilde M}{ 2 r^*}\Big) \geq \tilde M.
\end{align*}
Furthermore, it follows from exact solutions to $R_1$ and $R_2$ that
$$\max(|Y_1|, |Y_2|) \leq {r^*+\delta} \leq 2r^*.$$

\textbf{Step 2:} We apply the support theorem to each step described in Figure \ref{fig:separation}. Let $x_1\neq x_2 \in \mathbb R^2$ be two distinct points and $M>0$. Let $\delta \coloneqq |x_1-x_2|$ be their distance and $\phi \in [0, 2\pi]$ be the phase of their difference such that 
$$x_2-x_1 = \delta \begin{pmatrix}\cos \phi\\\sin \phi\end{pmatrix}.$$
Also, let $\tilde M = M+3$ and $\tilde T\geq 0$, $r^*>0$, $y_1, y_2\in \mathbb R^2$ as in Step 1. Note that $y_1-x_1 = y_2 -x_2$. By continuity of the flow, there exists an $\varepsilon>0$, such that 
\begin{align*}
     |\varphi(\tilde T, \omega_0, \tilde y_1) - \varphi(\tilde T, \omega_0, \tilde y_2)| &\geq \tilde M-1= M+2, \quad \text{and}\\
     |\varphi(\tilde T, \omega_0, \tilde y_1)| \vee |\varphi(\tilde T, \omega_0, \tilde y_2)|&\leq 2r^*+1,
\end{align*}
for all $\tilde y_1 \in B_{y_1}(\varepsilon), \, \tilde y_2 \in B_{y_2}(\varepsilon)$. 
By the Stroock-Varadhan support theorem \cite{StrookVaradhan72}, we furthermore have
\begin{equation}\label{eq:sepLemmaProb1}
    \mathbb P(|\varphi(\tilde T, \omega, \tilde y_1) - \varphi(\tilde T, \omega, \tilde y_2)| \geq M+1 \text{ and }  |\varphi(\tilde T, \omega, \tilde y_1)|\vee |\varphi(\tilde T, \omega, \tilde y_2)|\leq 2r^*+2)>0,
\end{equation}
By applying the support theorem to the two-point motion, there exists a small time  ${0<T_\varepsilon\ll 1}$, such that
\begin{align}
    &\quad~\mathbb P(\varphi(T_\varepsilon, \omega, x_1) \in B_{y_1}(\varepsilon) \text{ and } \varphi(T_\varepsilon, \omega, x_2)\in B_{y_2}(\varepsilon))\nonumber \\
    &= \mathbb P(\varphi(T_\varepsilon, \omega, x_1)-x_1 \in B_{y_1-x_1}(\varepsilon) \text{ and } \varphi(T_\varepsilon, \omega, x_2)-x_2\in B_{y_2-x_2}(\varepsilon))>0,\label{eq:sepLemmaProb2}
\end{align}
where we use the fact that $y_1-x_1 = y_2 -x_2$, as well as,
\begin{align}
&\quad~\mathbb P (|\varphi(T_\varepsilon, \omega, z_1)|\leq 1 \text{ and } |\varphi(T_\varepsilon, \omega, z_2)| \geq M)\nonumber\\
&\geq \mathbb P (\varphi(T_\varepsilon, \omega, z_1) - z_1 \in B_{-z_1}(1) \text{ and } \varphi(T_\varepsilon, \omega, z_2) - z_2 \in B_{-z_1}(1))>0\label{eq:sepLemmaProb3},
\end{align}
for all $z_1, z_2 \in \mathbb R^2$ with $|z_1-z_2|\geq M+1$ and $|z_1|, |z_2| \leq 2r^* +2.$

\textbf{Step 3:} We conclude the proof of the lemma. Set
\begin{align*}
\tilde y_{1/2} \coloneqq \varphi(T_\varepsilon, \omega, x_{1/2}),\quad\text{and}\quad z_{1/2} \coloneqq \varphi(T_\varepsilon+\tilde T, \omega, x_{1/2}) = \varphi(\tilde T, \theta_{T_\varepsilon} \omega, \tilde y_{1,2}).
\end{align*}
By \eqref{eq:sepLemmaProb2}, we have 
$$\mathbb P(\tilde y_1 \in B_{y_1}(\varepsilon) \text{ and }\tilde y_2 \in B_{y_2}(\varepsilon)) >0.$$
By \eqref{eq:sepLemmaProb1}, we have
$$\mathbb P(|z_1-z_2|\geq M+1\text{ and } |z_1|\vee |z_2| \leq 2r^*+2\mid \tilde y_1 \in B_{y_1}(\varepsilon) \text{ and }\tilde y_2 \in B_{y_2}(\varepsilon)) >0.$$
Finally, by \eqref{eq:sepLemmaProb3}, we have
\begin{multline*}
    \mathbb P\big(|\varphi(T_\varepsilon,\theta_{T_\varepsilon+\tilde T}\omega, z_1)|\leq 1 \text{ and } |\varphi(T_\varepsilon,\theta_{T_\varepsilon+\tilde T}\omega, z_2)|\geq M \\
    \big|\, |z_1-z_2|\geq M+1 \text{ and } |z_1|\vee|z_2|\leq 2r^*+2\big)>0.
\end{multline*}
Setting $T\coloneqq 2T_\varepsilon + \tilde T$, we conclude 
\begin{align*}
    \mathbb P\left(|\varphi(T,\omega, x_1)|\leq 1\text{ and } \left|\varphi(T,\omega, x_2)\right|\geq M \right)>0.
\end{align*}

\end{proof}

\bibliographystyle{alpha}
\bibliography{mybibnew}

\end{document}